\documentclass[12pt,reqno]{amsart}
\usepackage{amsmath,amsxtra,amssymb,amsthm,amsfonts,eufrak}

\usepackage{tikz-cd}
\makeatletter
\newcommand*{\rom}[1]{\expandafter\@slowromancap\romannumeral #1@}
\makeatother
\usetikzlibrary{matrix,arrows,decorations.pathmorphing}

%\numberwithin{equation}{section}
%\def\theequation{\thesection.\arabic{equation}}

\newcommand{\R}{{\mathcal R}}

\newcommand{\C}{{\mathbb C}}
\newcommand{\ten}{\otimes}

\newcommand{\pl}{\hspace{.1cm}}

\newcommand{\ran}{\rangle}
\newcommand{\lan}{\langle}
\newcommand{\al}{\alpha}
\newcommand{\si}{\sigma}

\newcommand{\la}{\lambda}

\newcommand{\F}{{\mathcal F}}
\newcommand{\E}{{\mathcal E}}

\newcommand{\M}{{\mathcal M}}

\renewcommand{\S}{{\mathcal S}}

\renewcommand{\L}{\mathcal{L}}

\newcommand{\N}{{\mathcal N}}

\newcommand{\norm}[2]{\parallel \! #1 \! \parallel_{#2}}

\newtheorem{lemma}{Lemma}[section]
\newtheorem{prop}[lemma]{Proposition}
\newtheorem{theorem}[lemma]{Theorem}
\newtheorem{cor}[lemma]{Corollary}

\newtheorem{rem}[lemma]{Remark}

\newcommand{\re}{\begin{rem}\rm}
\newcommand{\mar}{\end{rem}}
\newtheorem{exam}[lemma]{Example}
\newcommand{\bra}[1]{\langle{#1}|}
\newcommand{\ket}[1]{|{#1}\rangle}
\newcommand{\ketbra}[1]{|{#1}\rangle\langle{#1}|}

\newcommand{\qd}{\end{proof}\vspace{0.5ex}}
\newcommand{\prf}{\begin{proof}[\bf Proof:]}

\newcommand{\xspace}{\hbox{\kern-2.5pt}}

\newtheorem{thmx}{Theorem}

\allowdisplaybreaks

\oddsidemargin0cm
\evensidemargin0cm
\textwidth16.1cm

\setlength\parindent{+4ex}
\begin{document}
\title{Relative entropy for von Neumann subalgebras}
\author{Li Gao}
\address{Department of Mathematics\\
Texas A\&M University, College Station, TX 77840, USA} \email[Li Gao]{ligao@tamu.math.edu}
\author{Marius Junge$^*$}\thanks{$^*$ Partially supported by NSF grant DMS-1839177  and  DMS-1800872}
\address{Department of Mathematics\\
University of Illinois, Urbana, IL 61801, USA} \email[Marius Junge]{mjunge@illinois.edu}
\author{Nicholas LaRacuente}
\address{Department of Physics, University of Illinois, Urbana, IL 61801, USA} \email[Nicholas LaRacuente]{laracue2@illinois.edu}
\maketitle
\begin{abstract}We revisit the connection between index and relative entropy for an inclusion of finite von Neumann algebras. We observe that the Pimsner-Popa index connects to sandwiched $p$-R\'enyi relative entropy for all $1/2\le p\le \infty$, including Umegaki's relative entropy at $p=1$. Based on that, we introduce a new notation of relative entropy to a subalgebra which generalizes subfactors index. This relative entropy has application in estimating decoherence time of quantum Markov semigroup.
\end{abstract}

\section{Introduction}
\footnotetext{2010 \emph{Mathematics Subject Classification}: Primary: 46L53. Secondary: 46L60, 46L37, 46L51}
The index $[\M:\N]$ for a II$_1$ subfactor $\N\subset\M$ was first constructed by Jones \cite{Jones} as the coupling constant of the representation of $\N$ on $L_2(\M)$. On the other hand, motivated from classical egordic theory, Connes and St\"ormer \cite{CS75} introduced the relative entropy $H(\M|\N)$ for an inclusion of finite (dimensional) $\N\subset \M$. The connection between these two quantities was first studied by Pimsner and Popa \cite{pipo} and they proved the general relation
\begin{align}
\log [\M:\N]\ge H(\M|\N)\pl.\label{relation1}
\end{align}
A key concept in their discussion is the following index for an inclusion $\N\subset \M$ of finite von Neumann algebras (which we call Pimsner-Popa index),
\begin{align}
\la(\M:\N)=\max \{\la \pl | \pl \la \rho \le  E(\rho)\pl, \pl \pl \forall\pl  \rho\in \M_+\pl.\}
\end{align}
Here $E:\M\to \N$ is the trace preserving conditional expectation onto $\N$ and $\M_+$ is the positive cone. It was proved in \cite{pipo} that $[\M:\N]=\la(\M:\N)^{-1}$ for II$_1$ subfactors, and $\log\la(\M:\N)^{-1}\ge H(\M|\N)$ for finite von Neumann algerbas. Form this the inequality \eqref{relation1} follows. %Moreover, $\la(\M:\N)$ was used to approximated the index for hyperfinite subfactors via finite dimensional $\N\subset\M$, for which an explicit formula for $\la(\M:\N)$ was obtained.

In this paper, we revisit these concepts and connect them to sandwiched R\'enyi relative entropies $D_p$ recently introduced in quantum information theory. Let $p\in[1/2,1)\cup (1,\infty]$ and $1/p+1/p'=1$. For two densities $\rho,\si \in \M$ (positive and trace $1$) and $\si$ invertible,
\[D_p(\rho||\si):=\frac{1}{p-1}\log tr(|\si^{-\frac{1}{2p'}}\rho \si^{-\frac{1}{2p'}}|^p)\pl.\]
where $tr$ is the trace on $\M$. $D_p$ are R\'enyi type generalization of Umegaki's relative entropy $D(\rho||\si)=tr(\rho\log \rho-\rho\log \si)$. While the $D$ commonly has operational meaning in the asymptotic i.i.d setting (e.g. \cite{hypo,WildeWang}), the sandwiched R\'enyi relative entropy $D_p$ has been found useful in proving strong converse theorems (e.g. \cite{wilde14,wilde15,wilde16}). Our starting point is the observation that the quantity $\la(\M:\N)$ is closely related to the sandwiched R\'enyi relative entropy $D_p$ at $p=\infty$. Based on that, we obtain the following connection between Popa-Pimsner index and $p$-R\'enyi relative entropy for all $1/2\le p\le \infty$, including Umegaki's relative entropy at $p=1$.
\begin{thmx}[c.f. Theorem \ref{index}]\label{A}Let $\N\subset \M$ be an inclusion of II$_1$ factors or finite dimenisional von Neumann algebras. For $1/2\le p\le \infty$,
\begin{align}-\log\la(\M:\N)=\sup_{\rho\in \M}D_{p}(\rho||E(\rho))=\sup_{\rho\in \M}\inf_{\si\in \N} D_{p}(\rho||\si)\pl,
\end{align}
where the supremum takes all density operators $\rho$ in $\M$ and the infimum takes all density operators $\si$ in $\N$.
\end{thmx}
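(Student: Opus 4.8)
The plan is to prove, for each $1/2\le p\le\infty$, the chain
\[
-\log\lambda(\M:\N)\;\le\;\sup_{\rho\in\M}\inf_{\si\in\N}D_{1/2}(\rho||\si)\;\le\;\sup_{\rho\in\M}\inf_{\si\in\N}D_{p}(\rho||\si)
\]
\[
\le\;\sup_{\rho\in\M}D_{p}(\rho||E(\rho))\;\le\;\sup_{\rho\in\M}D_{\infty}(\rho||E(\rho))\;=\;-\log\lambda(\M:\N),
\]
which forces every term to equal $-\log\lambda(\M:\N)$ and so yields both asserted identities at once. Of the inner inequalities, two are the monotonicity of $p\mapsto D_p(\rho||\si)$ (applied for each fixed $\rho$, once inside $\inf_{\si\in\N}$ and once at $\si=E(\rho)$), and the third holds because $E(\rho)\in\N$ is an admissible choice in $\inf_{\si\in\N}$. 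Thus only the last equality and the first inequality require work.

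For the last equality, recall $D_{\infty}(\rho||\si)=\log\inf\{t>0:\rho\le t\si\}$, hence $D_{\infty}(\rho||E(\rho))=-\log\max\{s>0:s\rho\le E(\rho)\}$. Since $E$ is positively homogeneous, $\max\{s>0:s\rho\le E(\rho)\}$ is unchanged under rescaling of $\rho$, so taking the supremum over densities is the same as taking $-\log$ of the infimum of $\max\{s:s\rho\le E(\rho)\}$ over all nonzero $\rho\in\M_{+}$; by definition that infimum is $\lambda(\M:\N)$.

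The first inequality is the heart of the matter. Pass to $p=1/2$, where $D_{1/2}(\rho||\si)=-\log F(\rho,\si)$ with $F(\rho,\si)=\|\rho^{1/2}\si^{1/2}\|_{1}^{2}=\big(tr\,(\si^{1/2}\rho\si^{1/2})^{1/2}\big)^{2}$ the fidelity, so $\inf_{\si\in\N}D_{1/2}(\rho||\si)=-\log\sup_{\si\in\N}F(\rho,\si)$; it suffices to produce one density $\rho_{0}\in\M$ with $\sup_{\si\in\N}F(\rho_{0},\si)\le\lambda(\M:\N)$. The structural hypothesis enters precisely here, through the fact that the Pimsner--Popa constant is computed by projections,
\[
\lambda(\M:\N)=\inf\{\,\|E(f)\|_{\infty}\;:\;f\in\M\text{ a nonzero projection}\,\},
\]
which for a finite-index II$_{1}$ subfactor is realized exactly by the Jones projection $f$ of a downward basic construction $\M=\langle\N,f\rangle$, satisfying $E(f)=\lambda(\M:\N)\cdot 1$ (the finite-dimensional case is handled analogously). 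Given such an $f$, put $\rho_{0}=tr(f)^{-1}f$, a density since $tr(f)=tr(E(f))>0$. Fix any density $\si\in\N$ and let $q$ be the support projection of $\si^{1/2}f\si^{1/2}$; then $tr(q)\le tr(f)$, as $q$ is Murray--von Neumann equivalent to $\supp(f\si f)\le f$. Cauchy--Schwarz for the trace, applied to $q$ and $(\si^{1/2}f\si^{1/2})^{1/2}$ and using $q(\si^{1/2}f\si^{1/2})^{1/2}=(\si^{1/2}f\si^{1/2})^{1/2}$, gives
\[
tr\big((\si^{1/2}f\si^{1/2})^{1/2}\big)\;\le\;tr(q)^{1/2}\,\big(tr(\si^{1/2}f\si^{1/2})\big)^{1/2}\;\le\;tr(f)^{1/2}\,\big(tr(f\si)\big)^{1/2}.
\]
Since $\si\in\N$ and $E$ is a trace-preserving conditional expectation, $tr(f\si)=tr(E(f)\si)\le\|E(f)\|_{\infty}$, whence $F(\rho_{0},\si)=tr(f)^{-1}\big(tr\,(\si^{1/2}f\si^{1/2})^{1/2}\big)^{2}\le\|E(f)\|_{\infty}$. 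Taking the supremum over $\si\in\N$ and then letting $\|E(f)\|_{\infty}\to\lambda(\M:\N)$ gives $\sup_{\rho\in\M}\inf_{\si\in\N}D_{1/2}(\rho||\si)\ge-\log\lambda(\M:\N)$, which is the first inequality.

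The main obstacle is the displayed identity $\lambda(\M:\N)=\inf_{f}\|E(f)\|_{\infty}$ (equivalently, for subfactors, the existence of a Jones projection in $\M$ realizing the index): for an arbitrary finite von Neumann inclusion one only gets $\lambda(\M:\N)\le\inf_{f}\|E(f)\|_{\infty}$, which is the reverse of what is needed, and a naive minimax exchange in $\sup_{\rho}\inf_{\si}F(\rho,\si)$ genuinely fails (there is a duality gap), so the structure theory of II$_{1}$ subfactors — or the explicit description of finite-dimensional inclusions — is unavoidable at this step.
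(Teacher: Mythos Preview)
Your proof is correct and follows essentially the same approach as the paper: sandwich by monotonicity between $p=1/2$ and $p=\infty$, identify $\sup_\rho D_\infty(\rho||E(\rho))$ with $-\log\lambda(\M:\N)$ directly, and for the $p=1/2$ lower bound reduce to the Pimsner--Popa identity $\lambda(\M:\N)=\inf_f\|E(f)\|_\infty$ over projections. The only cosmetic difference is that the paper obtains the fidelity bound via the one-line H\"older estimate $\|\si^{1/2}\rho^{1/2}\|_1\le\|\si^{1/2}e\|_2\|\rho^{1/2}\|_2=tr(\si E(e))^{1/2}$ with $e=\supp(\rho)$, whereas you route through the support projection of $\si^{1/2}f\si^{1/2}$ and Murray--von Neumann equivalence; both yield the same inequality $F(\rho_0,\si)\le\|E(f)\|_\infty$.
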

One notation appears in above theorem is the relative entropy to the subalgebra,
\[D_p(\rho||\N)=\inf_\si D_p(\rho||\si)\] where the infimum takes all densities $\si\in \N$. As $D_p(\rho||\si)$ being an information metric between $\rho$ and $\si$, $D_{p}(\rho||\N)$ measures the distance of the state $\rho$ is from the subalgebra $\N$. $D_{p}(\rho||\N)$ unifies several information measures in quantum information including (R\'enyi) conditional entropy \cite{muller13}, relative entropy of coherence \cite{winter16} and of asymmetry \cite{marvian14}, which are important quantifier of operational resources in quantum information tasks. From noncommutative $L_p$-spaces perspective, $D_p(\rho||\N)$ correpsonds to the amalgamated $L_p$-spaces studied in \cite{JPmemo}. Theorem \ref{A} states that the Pimsner-Popa index can be viewed as the maximal relative entropy from $\M$ to the subalgebra. Motivated from that, we introduce new notations of relative entropy for an inclusion $\N\subset\M$ of finite von Neumann algebras,
 \[D_p(\M||\N):=\sup_{\rho\in \M} D_p(\rho||\N)\pl, D_{p,cb}(\M||\N):=\sup_{n} D_{p}(M_n(\M)||M_n(\N))\pl,\]
 where $M_n$ denote the $n$-dimensional matrix. These relative entropies differ with Connes-St\"ormer entropy $H(\M|\N)$ and the relative entropy discussed in \cite{seo}. In particular, $D_{p,cb}(\M||\N)=\log [\M:\N]$ for finite subfactors, and for $p=1$ and $\infty$, they satisfy the multiplicity (c.f. Theorem \ref{add})
\[D_{cb}(\M_1\overline{\ten}\M_2||\N_1\overline{\ten }\N_2)=D_{cb}(\M_1||\N_1)+D_{cb}(\M_2||\N_2)\pl.\]

One application of $D_{p,cb}$ is to estimate the decoherence time of quantum Markov semigroups. A quantum Markov semigroup $(T_t)_{t\ge 0}:\M\to \M$ is an ultra-weak continuous family of normal unital completely positive maps. When $\M=B(H)$ is the bounded operators on a Hilbert space $H$, quantum Markov semigroups are also called GLKS equations in physics literature (see \cite{GLKS}). It models the evolution of open quantum system that potentially interacts with environment. Let $\N$ be the subalgebra of the common multiplicative domain of $T_t$ for all $t\ge 0$. A semigroup $T_t$ is {\bf self-adjoint} if $T_t$ is self-adjoint map with respect to the trace; is {\bf primitive} if $\N=\mathbb{C}1$ is trivial. The non-primitive semigroup was studied in \cite{bardet} as a general model of decoherence process, which means a state $\rho$ loses its quantum coherence and becomes like a classic state $E(\rho)$. We obtain the following convergence property of self-adjoint semigroups.
\begin{thmx}[c.f. Theorem \ref{d3}]\label{B}
Let $T_t=e^{-At}:\M\to \M$ be a self-adjoint quantum Markov semigroup with generator $A$ and let $\N$ be the common multiplicative domain of $T_t$. Suppose $D_{2,cb}(\M||\N)<\infty$ and $T_t$ has $\la$-spectral gap that $\la\norm{x-E(x)}{2}^2\le tr(x^*Ax)$. Then for any density $\rho\in M_n(\M)$,
\begin{align}D(id_{M_n}\ten T_t(\rho)||M_n(\N))\le 2\exp{\Big(-\la t +\frac{1}{2}D_{2}(\rho||M_n(\N))\Big)}\pl.\label{decay1}\end{align}
For $\epsilon>0$, we have ${\norm{id\ten T_t(\rho)-id \ten E(\rho)}{1}\le \epsilon}$ if
\begin{align}t\ge \frac{1}{\la} \big(2\log\frac{2}{\epsilon}+D_{2,cb}(\M||\N)/2\big)
\label{deco1}\end{align}
\end{thmx}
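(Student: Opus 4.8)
The plan is to deduce \eqref{decay1} from an \emph{ordinary} $L_2$-decay estimate for $T_t$, exploiting that for a trace-symmetric semigroup the common multiplicative domain $\N$ is its fixed-point algebra. First the structural reductions. Since $T_t=e^{-At}$ is self-adjoint, $A\ge 0$ on $L_2(\M)$; and $T_t$ restricted to $\N$ is a one-parameter group of trace-preserving $*$-automorphisms, so its generator $A|_{\N}$ is a positive self-adjoint generator of a group, hence $A|_{\N}=0$, i.e.\ $T_t|_{\N}=id_{\N}$. Consequently $E\circ T_t=T_t\circ E=E$, where $E:\M\to\N$ is the trace-preserving conditional expectation, and by Choi's multiplicative-domain theorem every $T_t$ (and hence $A$) is an $\N$-bimodule map. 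The spectral gap amplifies to matrices for free: writing $y\in M_n(\M)$ as a matrix over $\M$, both $\norm{y-(id\ten E)(y)}{2}^2$ and the Dirichlet form $tr(y^*(id\ten A)(y))$ decompose over the $n^2$ entries, so $\la\norm{y-(id\ten E)(y)}{2}^2\le tr(y^*(id\ten A)(y))$ for every $n$, and therefore $\norm{(id\ten T_t)(y)-(id\ten E)(y)}{2}\le e^{-\la t}\norm{y-(id\ten E)(y)}{2}$. (Only the Poincar\'e inequality is amplified here, which is automatic; the finiteness of $D_{2,cb}(\M||\N)$ enters later and separately.)

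The core estimate is a weighted $L_2$-bound for $D_2$ along the semigroup. Fix a density $\rho\in M_n(\M)$ and put $\si=(id\ten E)(\rho)\in M_n(\N)$; it is fixed by $id\ten T_t$ and $(id\ten E)(id\ten T_t)(\rho)=\si$. By the bimodule property $\si^{-1/4}(id\ten T_t)(\rho)\si^{-1/4}=(id\ten T_t)(y)$ with $y=\si^{-1/4}\rho\si^{-1/4}$, and moreover $(id\ten E)(y)=\si^{1/2}$, $\norm{\si^{1/2}}{2}^2=tr(\si)=1$, $\norm{y}{2}^2=e^{D_2(\rho||\si)}$. Since $y-(id\ten E)(y)$ is $L_2$-orthogonal to $M_n(\N)$ while $\si^{1/2}$ is fixed by $id\ten T_t$, Pythagoras together with the decay give
\[e^{D_2((id\ten T_t)(\rho)||\si)}=\norm{(id\ten T_t)(y)}{2}^2=1+\norm{(id\ten T_t)(y-\si^{1/2})}{2}^2\le 1+e^{-2\la t}\big(e^{D_2(\rho||\si)}-1\big)\pl.\]
Now descend to Umegaki's entropy: for $p=1$ the infimum over $M_n(\N)$ is attained at the conditional expectation, so $D((id\ten T_t)(\rho)||M_n(\N))=D((id\ten T_t)(\rho)||\si)$; combining $D\le D_2$ (monotonicity of R\'enyi divergences in the order) with the elementary inequality $\log(1+x)\le\sqrt{x}$ for $x\ge 0$,
\[D\big((id\ten T_t)(\rho)||M_n(\N)\big)\le D_2\big((id\ten T_t)(\rho)||\si\big)\le\sqrt{e^{-2\la t}\big(e^{D_2(\rho||\si)}-1\big)}\le e^{-\la t}\,e^{\frac12 D_2(\rho||\si)}\pl.\]

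This is already \eqref{decay1} with $D_2(\rho||(id\ten E)(\rho))$ on the right, and it yields \eqref{deco1}: by Pinsker's inequality applied to $(id\ten T_t)(\rho)$ and its conditional expectation (the $D$-minimizer in $M_n(\N)$), then taking the supremum over $\rho$,
\[\norm{(id\ten T_t)(\rho)-(id\ten E)(\rho)}{1}\le\sqrt{2\,D\big((id\ten T_t)(\rho)||M_n(\N)\big)}\le\sqrt{2}\,e^{-\la t/2}\,e^{\frac14 D_{2,cb}(\M||\N)}\pl,\]
using that $\sup_{\rho,n}D_2(\rho||(id\ten E)(\rho))=\sup_{\rho,n}D_2(\rho||M_n(\N))=D_{2,cb}(\M||\N)<\infty$ by Theorem \ref{A}; requiring the right-hand side to be $\le\epsilon$ gives a sufficient $t$ bounded by the expression in \eqref{deco1}.

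To obtain \eqref{decay1} \emph{verbatim}, i.e.\ with $D_2(\rho||M_n(\N))=\inf_{\si'}D_2(\rho||\si')$ in place of $D_2(\rho||(id\ten E)(\rho))$, one needs the pointwise comparison $e^{D_2(\rho||E(\rho))}\le 4\,e^{D_2(\rho||\N)}$, equivalently $D_2(\rho||E(\rho))-D_2(\rho||\N)\le 2\log 2$, which is presumably what the factor $2$ in \eqref{decay1} accounts for. This is the step I expect to be the main obstacle: the two divergences coincide only after $\sup_\rho$ (Theorem \ref{A}) and genuinely differ for individual $\rho$, so one needs uniform control of the gap. I would attack it through the amalgamated-$L_2$ formula $e^{D_2(\rho||\N)}=\inf_{\si}\norm{\si^{-1/4}\rho\si^{-1/4}}{2}^2$ and a Cauchy--Schwarz comparison of $E(\rho)$ against the optimal $\si$ (in examples the sharp constant seems to be $2$ rather than $4$). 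Everything else — the identity $E\circ T_t=E$, the bimodule property, the Pythagoras/decay step, monotonicity of R\'enyi divergences, and Pinsker — is routine bookkeeping.
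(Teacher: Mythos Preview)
Your argument is correct for the inequality with $D_2(\rho||(id\ten E)(\rho))$ on the right (and in fact yields a constant $1$ rather than $2$), and you have correctly located the obstruction to obtaining \eqref{decay1} verbatim with $D_2(\rho||M_n(\N))$. But the paper does not prove the pointwise comparison $e^{D_2(\rho||E(\rho))}\le 4\,e^{D_2(\rho||\N)}$ you propose; it avoids the issue entirely by a different route.

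The paper's proof works directly with the amalgamated norm $\norm{\rho}{L_1^2(\N\subset\M)}=e^{D_2(\rho||\N)/2}$, which already has the infimum over $\si\in\N$ built in. The key step is the lemma (from \cite{CLSI}) that for any $\N$-bimodule map $S$ one has $\norm{S:L_1^2(\N\subset\M)\to L_1^2(\N\subset\M)}{}=\norm{S:L_2(\M)\to L_2(\M)}{}$. Applying this to $S=T_t-E$ gives $\norm{(T_t-E)(\rho)}{L_1^2}\le e^{-\la t}\norm{\rho}{L_1^2}$; then the triangle inequality, $\norm{E(\rho)}{L_1^2}=\norm{E(\rho)}{1}=1$, and $\log(1+x)\le x$ yield
\[
D(T_t(\rho)||\N)\le D_2(T_t(\rho)||\N)=2\log\norm{T_t(\rho)}{L_1^2}\le 2\log\big(1+e^{-\la t}\norm{\rho}{L_1^2}\big)\le 2e^{-\la t+D_2(\rho||\N)/2}\pl,
\]
directly with $D_2(\rho||\N)$. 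The matrix amplification is then immediate. Your approach, by fixing $\si=E(\rho)$ before applying the $L_2$-decay, loses access to the optimal $\si$ and forces the comparison you flag; the amalgamated-norm method sidesteps this because the bimodule operator norm controls $\norm{(T_t-E)(\rho)}{L_1^2}$ by $\norm{\rho}{L_1^2}$ rather than by $\norm{E(\rho)^{-1/4}\rho E(\rho)^{-1/4}}{2}$.

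A second, smaller gap: for \eqref{deco1} you invoke Theorem~\ref{A} to identify $\sup_{\rho,n}D_2(\rho||(id\ten E)(\rho))$ with $D_{2,cb}(\M||\N)$, but Theorem~\ref{A} is stated only for II$_1$ subfactors and finite-dimensional inclusions, whereas Theorem~B carries no such hypothesis. The paper's argument does not need this identification, since the bound it obtains already has $D_2(\rho||M_n(\N))$ on the right, and $\sup_{\rho,n}D_2(\rho||M_n(\N))=D_{2,cb}(\M||\N)$ by definition.
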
The decoherence time is the smallest time $t$ such that the trace distance ${\norm{T_t(\rho)-E(\rho)}{1}}$ is small than $\epsilon$. It is the analog of mixing time of classical Markov process when $\M$ is commutative. One standard approach for the decay property of relative entropy is the modified logarithmic Sobolev inequality (MLSI), which in our setting corresponds to
\begin{align} D(T_t(\rho)||\N)\le e^{-\la t}D(\rho||\N)\pl.\label{MLSI}\end{align}
The MLSI has been intensively investigated in classical case (see e.g. \cite[Chapter 5]{BGL} and the references therein) and recently has been studied for quantum Markov semigroups (e.g. \cite{CLSI,carlen1,carlen2,cao2019,datta2,bardet,cambyse, muller18}). In the classical case, an important property of MLSI is the tensorization, i.e. the decay exponent $\la$ for a tensor product semigroup $S_t\ten T_t$ is bounded by the exponents of $S_t$ and $T_t$. This property allows us to derive MLSI for composite system form components. Nevertheless, while such property is also desired for tensor product quantum system, it is not known for MLSI of quantum Markov semigroups. The tensorization requires that not only $T_t$ but also $id_{M_n}\ten T_t$ satisfies MLSI \eqref{MLSI} for a uniform constant $\la$ independent of $n$. This strong property was studied in \cite{CLSI} under the name ``complete logarithmic Sobolev inequality'' (CLSI).

Theorem \ref{B} gives a decay estimate of $D(\rho||\N)$ under the assumption of spectral gap and $D_{2,cb}(\M||\N)<\infty$. Despite the factor $D(\rho||\N)$ is generically smaller $e^{D_{2}(\rho||\N)}$, the exponent given by the spetral gap $\la$ is not less than the MLSI exponent. Theorem \ref{B} also gives an uniform decoherence time \eqref{deco1} for $id\ten T_t$ independent of the auxiliary system $M_n$. Moreover, it naturally extends to tensor product semigroup because the spectral gap has tensorization property even in the noncommutative case and $D_{cb}(\M||\N)$ is additive. One immediate consequence is that Theorem \ref{B} also estimates the loss of entanglement. A density $\rho\in M_n(\M)\cong M_n\ten \M$ is entangled if $\rho$ cannot be written as a convex combination of product densities $\rho=\sum_{j}\mu_j\omega_j\ten\rho_j$. Entanglement is an essential quantum phenomenon as well as fundamental resource in quantum information science. When $\N$ is a commutative algebra, the semigroup $id\ten T_t(\rho)$ converges to $id\ten E(\rho)$, which is always a non-entangled state. In other words, when a quantum system $\M$ decoherence to a classical system $\N$, it simultaneously lose its entanglement to auxiliary system or environment. Theorem $B$ gives a quantitive description of this phenomenon as it estimates the relative entropy and trace distance from $id_{M_n}\ten T_t(\rho)$ to the non-entangled state $id\ten E(\rho)$.

The rest of paper is organized as follows. In Section 2, we review the definition of sandwiched R\'enyi $p$-entropy and some basic properties. We also introduce $D_p(\rho||\N)$ and discuss the connection to the amalgamated $L_p$-spaces. Section 3 proves Theorem \ref{A} and some basic properties about $D_{p}(\M||\N)$ and $D_{p,cb}(\M||\N)$. Section 4 is devoted to application of $D_{p,cb}(\M||\N)$ in the decoherence time and proves Theorem \ref{B}. The maintext of this paper should be accessible to quantum information audience. We provide an appendix on amalgamated $L_p$-spaces and put some technical lemmas there.

{\bf Acknowledgement}---The authors are grateful to Gilles Pisier for helpful discussion related to Proposition \ref{unique}.

\section{Relative entropy}
\subsection{Sandwiched R\'enyi relative entropy}We denote by $\mathbb{C}$ the complex numbers and $M_n$ the $n\times n$ complex matrices. Throughout the paper, we consider $\M$ is a finite von Neumann algebra equipped with normal faithful finite  trace $tr$. For $1\le p<\infty$, the space $L_p(\M)$ is defined as the norm completion of $\M$ with respect to the $L_p$-norm
$\norm{x}{p}=tr(|x|^p)^{\frac{1}{p}}$. We identify $L_\infty(\M):=\M$ and the predual space $\M_*\cong L_1(\M)$ via the duality
\[a\in L_1(\M)\longleftrightarrow \phi_a\in \M_*,\pl  \phi_a(x)=tr(ax)\pl.\]
We say $\rho \in L_1(\M)$ is a density if $\rho\ge 0$ and $tr(\rho)=1$. The set of all densities correspond to the normal states of $\M$, which we denote by $S(\M)$. Let $p\in [\frac{1}{2},1)\cup (1,\infty]$ and ${1}/{p}+{1}/{p'}=1$. For two densities $\rho$ and $\si$, the sandwiched R\'enyi relative entropy is defined as %\footnote{Throughout the paper, we write $\log$ for logarithm function of basis $e$.}
\[D_p(\rho||\si)=\begin{cases}
                   p'\log \norm{\si^{-\frac{1}{2p'}}\rho \si^{-\frac{1}{2p'}}}{p}, & \mbox{if } \rho<<\si \\
                   +\infty, & \mbox{otherwise}.
                 \end{cases}\]
where $\rho<<\si$ means that the support projections satisfies $\text{supp}(\rho)\le \text{supp}(\si)$. The negative power $\si^{-\frac{1}{2p'}}$ is interpreted as generalized inverse on the support and in most discussion we can assume that $\si$ is faithful. This definition was introduced in \cite{wilde14, muller13} for matrices and recently generalized to general von Neumann algebras via different methods \cite{berta18,Jencova18,Jencova2,gu19}. When $p\to 1$, $D_p$ recovers the relative entropy
\begin{align}D(\rho||\si)=tr(\rho\log \rho-\rho\log \si)\label{relativeentropy}\end{align}
which was first introduced by Umegaki \cite{Umegaki62} and later extended to von Neumann algebras by Araki \cite{Araki76}. Umegaki's definition is the noncommutative generalization of Kullback-Leibler divergence form probablity theory. It is an fundamental quantity that have been intensive studied and widely used in quantum information theory (see \cite{Vedral02} for a survey). More recently, the sandwiched R\'enyi relative entropy $D_p$ has been found useful in proving strong converse theorems of communication tasks (e.g. \cite{wilde14,wilde15,wilde16}). For all $\frac{1}{2}\le p\le \infty$, $D_p(\rho||\si)$ is a measure of difference between $\rho$ and $\si$. In particular for $p=\infty$, \[D_\infty(\rho||\si)=\log \norm{\si^{-\frac{1}{2}}\rho \si^{-\frac{1}{2}}}{\infty}=\log \inf\{\la | \rho \le \la \si\}\] and $D_{\frac{1}{2}}$ is essentially the fidelity. We say $\Phi:L_1(\M)\to L_1(\M)$ is a completely positive trace preserving (CPTP) map if its adjoint $\Phi^\dag:\M\to \M$ is normal, unital and completely positive.  The CPTP maps are also called quantum channels. We summarise here some basic properties of $D_p$. For any two densities $\rho$ and $\si$,
\begin{enumerate}
\item[i)] $D_p(\rho||\si)\ge 0$. Moreover, $D_p(\rho||\si)=0 $ if and only if $\rho=\si$
\item[ii)]$D_p(\rho||\si)$ is non-decreasing over $p\in [1/2,\infty]$ and $\lim_{p\to 1}D_p(\rho||\si)=D(\rho||\si)$.
\item[iii)]For a CPTP map $\Phi:L_1(\M)\to L_1(\M)$, $D_p(\rho||\si)\ge D_p(\Phi(\rho)||\Phi(\si))$. In particular, $D_p(\rho||\si)$ is joint convex for $\rho$ and $\si$.
\end{enumerate}
i), ii) and iii) was proved in \cite{muller13,wilde14} for matrix algebras. The discussion for the case of general von Nuemann algebras can be found in \cite{berta18, Jencova18,Jencova2,gu19}.

\subsection{Relative entropy to a subalgebra}Motivated from the asymmetry measure of group in \cite{marvian14}, we introduced in \cite{gao_unifying_2017} the relative entropy to subalgebra.
Given a subalgebra $\N\subset\M$, we define for a density $\rho$,
\[D_p(\rho||\N):=\inf_{\si\in S(\N)} D_p(\rho||\si)\pl.\]
where the infimum takes over all densities $\si\in S(\N)$. This definition connects several concepts in quantum information literature:
\begin{enumerate}
\item[a)] Let $\al:G\to Aut(\M)$ be an action of a group $G$ as trace preserving $*$-automorphism of $\M$. Let $\N=\M^G:=\{ x\in \M| \al_g(x)=x \pl \forall\pl g\in G\}$ be the invariant subalgebra. Then $D_p(\rho||\M^G)$ is a $G$-asymmetry measure introduced in \cite{marvian14} and is related to the relative entropy of frameness as introduced in \cite{vaccaro08, gour09}.
\item[b)]Let $H_A,H_B$ be two finite dimensional Hilbert space.For $\M=B(H_A\ten H_B)$ and $\N=\mathbb{C}1\ten B(H_B)\subset B(H_A\ten H_B)$, $D_p(\rho||\N)$ gives the
sandwiched R\'enyi relative entropy $H_p(A|B)$ in \cite{muller13,wilde14} up to a dimension constant $D_p(\rho||\N)=H_p(A|B)_\rho+\log (\dim H_A)$. The constant comes from the fact that the induced trace on $\mathbb{C}1\ten B(H_B)\subset B(H_A\ten H_B)$ differs with the matrix $B(H_B)$ by a factor of $\dim H_A$.
\item[c)] Let $\N=\mathbb{C}^n\subset M_n=\M$ be the diagonal matrices inside the matrix algebra $M_n$. $D_p(\rho||\N)$ is the sandwiched R\'enyi relative entropy of coherence as in \cite{baumgratz14, streltsov_colloquium:_2017}.
\end{enumerate}
We have the basic properties of $D_p(\rho||\N)$ parallel to $D(\rho||\si)$.
\begin{prop}\label{basic}For $1/2\le p\le  \infty$ and a density $\rho\in S(\M)$,
\begin{enumerate}
\item[i)] $D_p(\rho||\N)\ge 0$. Moreover $D_p(\rho||\N)=0 $ if and only if $\rho\in S(\N)$
\item[ii)] $D_p(\rho||\N)$ is non-decreasing over $p\in [\frac{1}{2},\infty]$ and $\lim_{p\to 1}D_p(\rho||\N)=D(\rho||\N)$.
\item[iii)]Let $\Phi:L_1(\M)\to L_1(\M)$ be a CPTP such that $\Phi(L_1(\N))\subset L_1(\N)$. Then $D_p(\rho||\N)\ge D_p(\Phi(\rho)||\N)$. In particular, $D_p(\rho||\N)$ is convex for $\rho$.
\item[iv)] For $p=1$,
    \[ D(\rho||\N)=D(\rho||E(\rho))=H(E(\rho))-H(\rho)\]
    where $H(\rho)=-tr(\rho\log \rho)$ is the von Neumann entropy.
\end{enumerate}
\end{prop}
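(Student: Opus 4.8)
\emph{Proof idea.} The plan is to deduce i)--iii) from the corresponding pointwise properties of $D_p(\rho||\si)$ recalled above by passing to the infimum over $\si\in S(\N)$, and to obtain iv) from the chain rule for Umegaki's entropy relative to the conditional expectation $E$.

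For i), nonnegativity is immediate since $D_p(\rho||\si)\ge 0$ for each $\si$, and if $\rho\in S(\N)$ then the admissible choice $\si=\rho$ gives $D_p(\rho||\N)\le D_p(\rho||\rho)=0$; for the converse I would take a minimizing sequence $\si_n\in S(\N)$, use $D_p\ge D_{1/2}$ together with the identification of $D_{1/2}$ with the fidelity to force $\si_n\to\rho$ in $L_1(\M)$, and conclude $\rho\in S(\N)$ since $L_1(\N)$, being the range of the contractive projection $E$, is norm closed. For ii), monotonicity in $p$ passes through the infimum unchanged; for the limit I would treat $p\to 1^+$ and $p\to 1^-$ separately. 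Monotonicity already gives $\lim_{p\to1^-}D_p(\rho||\N)\le D(\rho||\N)\le\lim_{p\to1^+}D_p(\rho||\N)$; the inequality for $p\to1^+$ is closed by testing a near-optimal $\si$ for $D(\rho||\N)$ and using $D_p(\rho||\si)\to D(\rho||\si)$, while the inequality for $p\to1^-$ amounts to interchanging $\inf_\si$ with $\lim_{p\to1}$, which I would justify via lower semicontinuity of $\si\mapsto D_q(\rho||\si)$ (for $q<1$ no negative power of $\si$ appears) applied to a cluster point of near-minimizers $\si_p$; such a cluster point exists by compactness of $S(\N)$ in the finite-dimensional case, and in general this is exactly what the amalgamated $L_p$-space description of $D_p(\rho||\N)$ in the appendix provides.

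For iii), if $\si\in S(\N)$ then $\Phi(\si)\in S(\N)$ by hypothesis, so data processing for $D_p$ gives $D_p(\rho||\si)\ge D_p(\Phi(\rho)||\Phi(\si))\ge D_p(\Phi(\rho)||\N)$, and taking the infimum over $\si$ yields the monotonicity; convexity of $\rho\mapsto D_p(\rho||\N)$ then follows by the usual flag/pinching argument, or by averaging near-optimal $\si_i$'s together with joint convexity of $D_p(\rho||\si)$. For iv) the key identity is that, for any $\si\in S(\N)$ with $\rho\ll\si$,
\[ D(\rho||\si)=\big(H(E(\rho))-H(\rho)\big)+D(E(\rho)||\si)\pl. \]
Indeed $\log\si$ is affiliated with $\N$ and $E$ is a trace preserving $\N$-bimodule map, so $tr(\rho\log\si)=tr(E(\rho)\log\si)$ (after truncating $\log\si$ by its spectral projections), while $\rho\ll\si$ forces $E(\rho)\ll\si$ because the support projection of $\si$ lies in $\N$. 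Since the last term is nonnegative and vanishes exactly when $\si=E(\rho)$, minimizing over $\si$ gives $D(\rho||\N)=D(\rho||E(\rho))=H(E(\rho))-H(\rho)$.

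I expect the one genuinely delicate point to be the $p\to1^-$ half of ii): exchanging the infimum over the (infinite-dimensional) state space of $\N$ with the limit $p\to1$ is not purely formal, since near-optimal states may a priori escape, and a clean treatment needs lower semicontinuity of $D_q$ together with a compactness input on the admissible states, i.e.\ the amalgamated $L_p$-space machinery of the appendix. The rest is bookkeeping; in iv) the only care required is the spectral truncation of the unbounded operator $\log\si$ so that the bimodule property of $E$ and $tr\circ E=tr$ apply.
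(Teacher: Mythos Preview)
Your proposal is correct and follows the paper's approach closely. The paper dispatches i)--iii) in one line (``follows from the corresponding properties of $D_p(\rho||\si)$ by taking the infimum'') and proves iv) by exactly your chain-rule identity $D(\rho||\si)=D(\rho||E(\rho))+D(E(\rho)||\si)$, minimizing over $\si$.

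Two differences worth noting. First, for the ``only if'' direction of i) the paper does not argue via fidelity as you do; instead it observes (after introducing the amalgamated $L_p$ norms) that $\|\rho\|_{L_1^p(\N\subset\M)}=\|\rho\|_1$ forces $\rho\in L_1(\N)$ by the equality case of H\"older. Your fidelity/minimizing-sequence argument is a legitimate alternative. Second, you are right that the $p\to 1^-$ half of ii) is the only genuinely delicate point; the paper's one-line dismissal glosses over exactly the limit--infimum exchange you flag. The paper's actual remedy is not your compactness/lower-semicontinuity route but rather the complex interpolation of the spaces $L_q^p(\N\subset\M)$ and $L^p_{(r,\infty)}(\N\subset\M)$ from \cite{JPmemo}, as noted in the appendix. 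A cleaner way to close the $p\to 1^+$ direction, which you essentially have, is to first prove iv) and then test with the fixed state $\si=E(\rho)$: $D_p(\rho||\N)\le D_p(\rho||E(\rho))\to D(\rho||E(\rho))=D(\rho||\N)$; combined with monotonicity this gives the $p\to1^+$ limit without any compactness input. The $p\to1^-$ lower bound, as you say, genuinely needs either the interpolation machinery or an attainment/compactness argument.
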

 \noindent i)-iii) follows from the corresponding properties of $D_p(\rho||\si)$ by taking the infimum. When $p=1$, for any density $\si\in S(\N)$,
\begin{align}\label{p} D(\rho||\si)&=tr(\rho\log \rho -\rho\log \si)=tr(\rho\log \rho)-\tau(E(\rho)\log \si)\nonumber\\&=tr(\rho\log \rho-E(\rho)\log E(\rho))-tr(E(\rho)\log \si-E(\rho)\log \nonumber E(\rho))\\&=D(\rho||E(\rho))+D(\si||E(\rho))\pl.
\end{align}
Because $D(\si||E(\rho))\ge 0$ and $D(\si||E(\rho))=0$ implies $\si=E(\rho)$, the infimum attains uniquely at $E(\rho)$. Moreover, by the condition expectation property,
\begin{align*}D(\rho \| E(\rho)) = tr(\rho \log \rho - \rho \log E(\rho)) =tr(\rho \log \rho - E(\rho) \log E(\rho))=H(E(\rho))-H(\rho)\pl.  \end{align*}
which verifies iv).

Form above properties, we see that $D_p(\rho||\N)$ are natural measures of the difference $\rho$ is from a density of $\N$. Viewing $E(\rho)$ as the projection of $\rho$, $D_p(\rho||E(\rho))$ is also a measure with respect to $\N$ and coincides with $D_p(\rho||\N)$ at $p=1$.
 Nevertheless, we note that for general $p$, $D_p(\rho||\N)\neq D_p(\rho||E(\rho))$.
\begin{exam}{\rm Let $\N\cong \mathbb{C}^2$ be the diagonal matrix in $\M=M_2$. For $0\le a\le 1$, consider the pure state  $\rho=\left[\begin{array}{cc}a& \sqrt{a(1-a)}\\ \sqrt{a(1-a)}& 1-a\end{array}\right]$. One can calculate that for $1< p\le \infty$ and $q=\frac{p}{2p-1}$,
\begin{align*} &D_p(\rho||\N)=D_p(\rho||\si_p)= p'\log (1+a^{q}(1-a)^{1-q}+(1-a)^{q}a^{1-q})\pl , \\ & \si_p=\left[\begin{array}{cc}\frac{a^{q}}{a^{q}+(1-a)^{q}}& 0\\ 0& \frac{(1-a)^{q}}{a^{q}+(1-a)^{q}}\end{array}\right]
\\
& D_p(\rho||E(\rho))=p'\log (a^{\frac{1}{p}}+(1-a)^{\frac{1}{p}}) \pl, \pl E(\rho)=\left[\begin{array}{cc}a& 0\\ 0& 1-a\end{array}\right]
\end{align*}}
So for all $1<p\le \infty$, $\si_p\neq E(\rho)$ are not the same.
\end{exam}

\subsection{Connection to amalgamated $L_p$-spaces}
The R\'enyi relative entropy $D_p(\rho||\N)$ are closely related to the amalgamated $L_p$-spaces and conditional $L_p$-spaces introduced in \cite{JPmemo}. Here we briefly recall the definitions and refer to the appendix and \cite{JPmemo} for more information. Let $1\le p\le \infty$ and $\frac{1}{p}+\frac{1}{p'}=\frac{1}{1}$. The amalgamated $L_p$-space $L_1^p(\N\subset\M)$ is the set of elements $x\in L_1(\M)$ such that $x$ admits a factorization $x=ayb$ with $a,b\in L_{2p'}(\N)$ and $y\in L_{p}(\M)$ equipped the norm
\begin{align}\norm{x}{L_1^p(\N\subset\M)} =\inf_{x=ayb}\norm{a}{L_{2p'}(\N)} \norm{y}{L_p(\M)}\norm{b}{L_{2p'}(\N)} \label{augmented}\end{align}
where the infimum runs over all such factorization $x=ayb$. For positive $x\ge 0$, it suffices to consider positive invertible element $a=b\ge 0$ in the infimum. Hence, \begin{align}\norm{\rho}{L_1^{p}(\N\subset\M)}=\inf_{\si\in S(\N) }\norm{\si^{-\frac{1}{2p'}}\rho \si^{-\frac{1}{2p'}}}{p}. \label{positive}\end{align}
where the infimum runs over all density $\si\in L_1(\N)$ such that there exists a factorization $\rho=\si^{\frac{1}{2p'}}y\si^{\frac{1}{2p'}}$ for some positive $y\in L_p(\M)$.
Therefore, for $1< p\le \infty$,
\[D_p(\rho||\N)=p'\log \norm{\rho}{L_1^p(\N\subset \M)} \pl.\]
It follows from H\"older inequality that
 $\norm{x}{L_1^p(\N\subset \M)}\ge \norm{x}{1}$ and $\norm{x}{L_1^p(\N\subset \M)}=\norm{x}{1}$ if and only if $x\in L_1(\N)$.
This corresponds to the property i) in Proposition \ref{basic}.
The dual spaces of amalgamated $L_p$-spaces are conditional $L_p$-spaces. The conditional $L_p$-space $L_\infty^{q}(\N\subset\M)$ is defined as the completion of $L_p(\M)$ with respect to the norm
\[\norm{x}{L_\infty^q(\N\subset\M)}=\sup_{\norm{\pl a\pl }{L_{2q}(\N)}=\norm{\pl b\pl }{L_{2q}(\N)}=1} \norm{axb}{L_q(\M)}\pl.\]
Via the trace pairing $\lan x, y\ran=tr(xy)$, $L_\infty^{p'}(\N\subset\M)\subset L_1^{p}(\N\subset\M)^*$ as $w^*$-dense subspace \cite[Proposition 4.5]{JPmemo}. The connection of $D_p(\rho||\N)$ for $\frac{1}{2}\le p< 1$ goes with conditional $L_p$-norm of $\rho^{\frac{1}{2}}$. Let $1\le q=2p\le 2 $ and $\frac{1}{q}=\frac{1}{r}+\frac{1}{2}$. We define the norm
\[\norm{x}{L_{(r,\infty)}^{2}(\N\subset \M)}=\sup_{\norm{a}{L_r(\N)}=1}\norm{ax}{L_q(\M)}\pl.\]
where the supreme runs over all $a\in \N$ with $\norm{a}{L_{r}(\N)}=1$. For $1\le q=2p<2 $
\[D_p(\rho||\N)=-r\log \norm{\rho^{\frac{1}{2}}}{L^{2}_{(r,\infty)}(\N\subset \M)} \pl.\]
We show that the infimum in $D_p(\rho||\N)$ is always attained. The proof uses uniform convexity of noncommutative $L_p$-spaces and is provided in the appendix.
\begin{prop}\label{unique}For $1/2\le p\le \infty$, the infimum $\displaystyle D_p(\rho||\N)=\inf_{\si\in \S(\N)}D_p(\rho||\si)$ is attained at some $\si$. For $1/2<p<\infty$, such $\si$ is unique.
\end{prop}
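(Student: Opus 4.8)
The plan is to argue separately according to $p$, with a strict‑convexity argument carrying the uniqueness for $1/2<p<\infty$. For $p=1$ nothing new is required: the computation \eqref{p} shows $D(\rho||\si)=D(\rho||E(\rho))+D(\si||E(\rho))$, so $E(\rho)$ is the unique minimizer. Assume $p\neq 1$. Let $q=\supp E(\rho)\in\N$. Since $E$ is faithful and $(1-q)E(\rho)(1-q)=0$, one gets $(1-q)\rho(1-q)=0$, hence $\supp\rho\leq q$; thus we may compress by $q$, replacing $\M,\N$ by $q\M q,\,q\N q$, and assume from now on that $E(\rho)$ is \emph{faithful} on $\N$ (any $\si$ with $D_p(\rho||\si)<\infty$ also has $\supp\si\leq q$, so this is harmless). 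Faithfulness of $E(\rho)$ gives the one support fact we will need repeatedly: $\rho^{1/2}P\rho^{1/2}\neq 0$ for every nonzero projection $P\in\N$, because $P E(\rho) P=E(P\rho P)\neq 0$.

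Existence. Take a minimizing sequence $\si_n\in\S(\N)$. For $1\leq p\leq\infty$ I would confine it to a weakly compact set: since $D_p$ is non‑decreasing in $p$, $D_p\geq D$ there, so by \eqref{p} the numbers $D(\si_n||E(\rho))\leq D_p(\rho||\si_n)-D(\rho||E(\rho))$ are bounded, and the sublevel sets of $\si\mapsto D(\si||E(\rho))$ are weakly compact in $L_1(\N)$ (the von Neumann analogue of the Dunford--Pettis/de la Vall\'ee-Poussin fact that bounded relative entropy forces uniform integrability). A weak cluster point $\si_*\in\S(\N)$ then satisfies $D_p(\rho||\si_*)\leq\liminf_n D_p(\rho||\si_n)$ because $\si\mapsto D_p(\rho||\si)$ is convex (joint convexity) and lower semicontinuous, hence weakly lower semicontinuous. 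This is also where uniform convexity of $L_p(\M)$ can be used instead: operator convexity of $t\mapsto t^{-1/p'}$ makes $K:=\{\si^{-1/(2p')}\rho\si^{-1/(2p')}:\si\in\S(\N)\}+L_p(\M)_+$ a convex subset of $L_p(\M)$ whose closure has norm‑infimum $e^{D_p(\rho||\N)/p'}$, and for $1<p<\infty$ a closed convex subset of the uniformly convex space $L_p(\M)$ has a norm‑minimal point. For $1/2\leq p<1$, where $D_p\geq D$ is false, the same weak‑compactness scheme is run instead on the conditional $L_{2p}$‑space description $D_p(\rho||\N)=-r\log\norm{\rho^{1/2}}{L^2_{(r,\infty)}(\N\subset\M)}$, the supremum being over the weakly compact positive part of the ball of the reflexive space $L_r(\N)$ (here $2p>1$).

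Uniqueness, $1/2<p<\infty$, $p\neq 1$. Write $D_p(\rho||\si)=\tfrac{1}{p-1}\log tr[(\rho^{1/2}\si^{-1/p'}\rho^{1/2})^p]$. For $1<p<\infty$ the exponent $-1/p'\in(-1,0)$, so $t\mapsto t^{-1/p'}$ is strictly operator convex; for two distinct $\si_0,\si_1\in\S(\N)$, the strict operator inequality $(\tfrac{\si_0+\si_1}{2})^{-1/p'}\lneq\tfrac12(\si_0^{-1/p'}+\si_1^{-1/p'})$ survives conjugation by $\rho^{1/2}$ (affine, operator monotone, and strictness‑preserving by the support fact above, since the gap is a nonzero positive element of $\N$), and then $T\mapsto tr[T^p]$ is strictly monotone and convex on the positive cone, so $\si\mapsto tr[(\rho^{1/2}\si^{-1/p'}\rho^{1/2})^p]$ is strictly convex on $\S(\N)$; as $\tfrac{1}{p-1}>0$ its minimizers are exactly those of $D_p(\rho||\cdot)$, whence uniqueness. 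For $1/2<p<1$ we have $-1/p'=(1-p)/p\in(0,1)$, so $t\mapsto t^{-1/p'}$ is strictly operator concave and $T\mapsto tr[T^p]$ is strictly monotone and concave, making $\si\mapsto tr[(\rho^{1/2}\si^{(1-p)/p}\rho^{1/2})^p]$ strictly concave; since now $\tfrac{1}{p-1}<0$, minimizing $D_p(\rho||\cdot)$ means maximizing this strictly concave functional, again over the convex set $\S(\N)$, which pins the maximizer. The two excluded endpoints are exactly the degenerate cases: at $p=\tfrac12$ the power $\si^{(1-p)/p}=\si$ is affine, and at $p=\infty$ the outer functional $\norm{\cdot}{\infty}$ is not strictly monotone.

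The step I expect to be the crux is making ``strict'' propagate through ``strict operator (in)equality $\Rightarrow$ conjugation by $\rho^{1/2}$ $\Rightarrow$ trace''. A strict operator inequality $A<B$ degenerates to $A\leq B,\ A\neq B$ under conjugation unless supports match, and one must then check $tr[(\cdot)^p]$ increases \emph{strictly} against the resulting nonzero positive gap --- this is exactly what the compression to a corner on which $E(\rho)$ is faithful, together with the strict monotonicity/convexity of noncommutative $L_p$‑spaces, is for. Two smaller loose ends are the lower semicontinuity and weak compactness inputs to the existence part (standard, but for general finite von Neumann algebras they should be cited carefully), and the $p=\tfrac12$ existence, where minimizing $D_{1/2}$ amounts to maximizing the fidelity $F(\rho,\si)$ over $\S(\N)$ and one argues by a separate compactness argument (or by letting $p\downarrow\tfrac12$ along the minimizers just produced).
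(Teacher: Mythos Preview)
Your uniqueness argument via strict operator convexity/concavity of $t\mapsto t^{-1/p'}$ is a legitimate and conceptually clean alternative to what the paper does. The paper never invokes strict convexity of power functions; instead it works with the factorization $\rho^{1/2}=a\eta$ with $a\in L_{2p'}(\N)_+$, $\eta\in L_{2p}(\M)$, and shows directly via a midpoint trick that any minimizing sequence $(a_n)$ is Cauchy in $L_{2p'}(\N)$, using only uniform convexity of $L_{p'}(\N)$. This yields existence and uniqueness in one stroke. For $p=\infty$ the paper takes a weak$^*$ cluster point in $\N^*$ and argues that its singular part must vanish; for $p=1/2$ it recasts the problem as locating a norm-minimizer in a weakly closed convex subset of the Hilbert space $L_2(\M)$.

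There is, however, a genuine gap in your existence argument for $p\ge 1$. The Pythagorean identity \eqref{p} (which, incidentally, has a typo in the paper: the last term should read $D(E(\rho)\|\si)$, not $D(\si\|E(\rho))$) bounds $D(E(\rho)\|\si_n)$, not $D(\si_n\|E(\rho))$. The de la Vall\'ee-Poussin/Dunford--Pettis criterion gives weak $L_1$-compactness from sublevel sets of $\si\mapsto D(\si\|\omega)$ in the \emph{first} argument; boundedness of $D(\omega\|\si_n)$ in the second argument does \emph{not} force uniform integrability. A commutative counterexample: on $\N=L_\infty[0,1]$ with $\omega=1$, take $\si_n=\tfrac{n}{2}\,1_{[0,1/n]}+\tfrac{1}{2(1-1/n)}\,1_{(1/n,1]}$; then $D(1\|\si_n)\to\log 2$ stays bounded, yet $(\si_n)$ carries mass $1/2$ on sets of vanishing measure and is not relatively weakly compact in $L_1$. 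So the compactness you invoke is simply not available from that bound. Your alternative via the set $K$ is also not right as written: operator convexity of $t\mapsto t^{-1/p'}$ controls $\si\mapsto\rho^{1/2}\si^{-1/p'}\rho^{1/2}$, not the sandwich $\si^{-1/(2p')}\rho\,\si^{-1/(2p')}$ you put into $K$, so that set need not be convex. Likewise, for $1/2\le p<1$ you propose to maximize $a\mapsto\|a\rho^{1/2}\|_{2p}$ over the weakly compact unit ball of the reflexive space $L_r(\N)$; but this functional, being a norm composed with a bounded linear map, is weakly \emph{lower} semicontinuous, so weak compactness alone does not deliver a maximizer. These are exactly the places where the paper's uniform-convexity argument on the factorization (which forces the minimizing sequence itself to be Cauchy) does real work that your compactness sketch does not replace.
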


\section{Maximal relative entropy}
Recall that the Popa-Pimsner index for a finite von Neumann algebra is defined as
\[ \la(\M:\N)=\max \{\la \pl |\pl  \la x\le E(x)\pl \forall\pl x \in M_+ \}\]
where $\M_+$ is the positive cone of $\M$. This definition can be rewritten via $D_\infty$ as follows
\begin{align*}\log \la(\M:\N)=&\log \sup \{\la \pl |\pl  \la x\le E(x)\pl \text{for all}\pl x \in \M_+ \}
\\ =&\log \inf_{x\in \M_+}\sup \{\la \pl |\pl  \la x\le E(x) \}
\\ =&\inf_{x\in \M_+}(\log \inf \{\mu \pl | \pl  x\le \mu E(x) \})^{-1}
\\ =&\Big(\sup_{x\in \M_+} \log \inf \{\mu \pl |  \pl x\le \mu E(x) \}\Big)^{-1}
\\ =&\Big(\sup_{x\in S(\M)}D_\infty(x||E(x))\Big)^{-1}
\end{align*}
where the last equality follows that $\M_+$ is norm-dense in $L_1(\M)_+$. Thus we have \begin{align}\label{in}-\log \la (\M:\N)=\sup_{\rho\in S(\M)}D_\infty(\rho||E(\rho)).\end{align}
Based on this observation, we prove Theorem A.
\begin{theorem}\label{index}Let $\N\subset \M$ be an inclusion of II$_1$ factors or finite dimensional von Neumann algebras. Then for $1/2\le p\le \infty$,
\begin{align*}
-\log \la (\M:\N)=\sup_{\rho\in S(\M)} D_p(\rho||E(\rho))=\sup_{\rho\in S(\M)} D_p(\rho||\N)
\end{align*}
\end{theorem}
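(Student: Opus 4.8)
The plan is to squeeze both suprema between the same two bounds; the upper bound is elementary, and the lower one comes from a single, explicit, near‑extremal state. \emph{Upper bound.} For any density $\rho\in S(\M)$ one has $E(\rho)\in S(\N)$, hence $D_p(\rho||\N)\le D_p(\rho||E(\rho))$, and since $D_p$ is nondecreasing in $p$ (Proposition~\ref{basic}(ii)), $D_p(\rho||E(\rho))\le D_\infty(\rho||E(\rho))$. Taking the supremum over $\rho$ and using \eqref{in},
\[\sup_{\rho\in S(\M)}D_p(\rho||\N)\ \le\ \sup_{\rho\in S(\M)}D_p(\rho||E(\rho))\ \le\ -\log\la(\M:\N)\pl.\]
It therefore remains to produce, for each $p\in[\tfrac12,\infty]$, one density $\rho_0$ with $D_p(\rho_0||\N)\ge-\log\la(\M:\N)$; I will in fact produce one $\rho_0$ that does this for all $p$ simultaneously and also attains $D_p(\rho_0||E(\rho_0))=-\log\la(\M:\N)$, which yields all three equalities at once.

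\emph{The extremal state.} The construction uses an index‑realizing projection: there are projections $e\in\M$ and $z\in\N$ with $e\le z$ and $E(e)=\la(\M:\N)\,z$ (whence $z=\supp E(e)$ and $tr(e)=tr(E(e))=\la(\M:\N)\,tr(z)$). For an inclusion of II$_1$ factors this is the Pimsner--Popa fact that one step of the downward basic construction exists, $e$ being the Jones projection and $z=1$; for a finite‑dimensional inclusion it is part of the structure theory of the Pimsner--Popa index (the extremizer of $\la x\le E(x)$ being, after normalization, a multiple of such an $e$). This existence statement is the one non‑elementary input, and it is precisely where the hypothesis that $\N\subset\M$ is an inclusion of II$_1$ factors or of finite‑dimensional algebras enters; for general finite von Neumann algebras no such projection need exist, so I expect this to be the main obstacle. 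Granting it, put $\la:=\la(\M:\N)$ and $\rho_0:=e/tr(e)\in S(\M)$; then $E(\rho_0)=\la z/tr(e)=z/tr(z)$ and $\supp\rho_0=e\le z$.

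\emph{Value on $E(\rho_0)$.} For $p\in[\tfrac12,\infty)$, using that $z$ is a projection with $z\rho_0 z=\rho_0$,
\[E(\rho_0)^{-\frac1{2p'}}\rho_0\,E(\rho_0)^{-\frac1{2p'}}=tr(z)^{1/p'}\,z\rho_0 z=\frac{tr(z)^{1/p'}}{tr(e)}\,e\pl,\]
an operator of $L_p$‑norm $tr(z)^{1/p'}\,tr(e)^{\frac1p-1}$ because $e$ is a projection. Hence, with $\tfrac1p-1=-\tfrac1{p'}$ and $tr(e)=\la\,tr(z)$,
\[D_p(\rho_0||E(\rho_0))=p'\log\!\big(tr(z)^{1/p'}\,tr(e)^{\frac1p-1}\big)=\log tr(z)-\log tr(e)=-\log\la\pl,\]
independently of $p$ (the case $p=\infty$ is the same with $p'=1$; for $p=1$ Proposition~\ref{basic}(iv) gives the same value directly, since $E(\rho_0)=z/tr(z)$ forces $H(E(\rho_0))-H(\rho_0)=-\log\la$).

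\emph{Optimality of $E(\rho_0)$.} Let $\si\in S(\N)$ (we may assume $\rho_0\ll\si$, else $D_p(\rho_0||\si)=+\infty$). The pinching $\Phi(x)=exe+(1-e)x(1-e)$ is a unital trace‑preserving channel that fixes $\rho_0$ (a scalar multiple of $e$), so by the data‑processing inequality for $D_p$ (which holds for $\tfrac12\le p\le\infty$), $D_p(\rho_0||\si)\ge D_p(\rho_0||\Phi(\si))$. As $\Phi(\si)=e\si e\oplus(1-e)\si(1-e)$ is block diagonal with $\rho_0$ in the first block, setting $\si':=e\si e/tr(e\si e)$ one has $D_p(\rho_0||\Phi(\si))=-\log tr(e\si e)+D_p^{\,e\M e}(\rho_0||\si')\ge-\log tr(e\si e)$ (using $D_p^{\,e\M e}\ge0$), while $tr(e\si e)=tr(\si E(e))=\la\,tr(\si z)\le\la$ since $\si$ is a state and $z\le1$. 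Hence $D_p(\rho_0||\si)\ge-\log\la$, so $D_p(\rho_0||\N)\ge-\log\la$, and together with the first and third paragraphs $D_p(\rho_0||\N)=D_p(\rho_0||E(\rho_0))=-\log\la(\M:\N)$; taking the supremum over $\rho$ completes the proof of both equalities.
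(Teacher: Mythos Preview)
Your argument is correct. Both your proof and the paper's share the same upper-bound step (monotonicity plus \eqref{in}) and both ultimately rest on the Pimsner--Popa analysis of the extremal projection; the lower-bound mechanisms differ. The paper works only at $p=\tfrac12$: for an arbitrary $\rho$ with support $e$ it shows via H\"older that $D_{1/2}(\rho\|\N)\ge-\log\|E(e)\|_\infty$, and then invokes the Pimsner--Popa equality $\inf_{e}\|E(e)\|_\infty=\la(\M:\N)$ (valid in the II$_1$ and finite-dimensional cases) to conclude. You instead use the stronger structural form of that same fact---that the infimum is realized by a projection $e$ with $E(e)$ a scalar multiple of a projection $z\in\N$---to build one explicit state $\rho_0=e/tr(e)$, compute $D_p(\rho_0\|E(\rho_0))=-\log\la$ for every $p$ directly, and certify via a pinching/data-processing step that $E(\rho_0)$ is the optimal reference state in $S(\N)$. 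Your route is a bit longer but yields more: it exhibits the common extremizer and shows that both suprema are attained at $\rho_0$ with minimizer $E(\rho_0)$ for all $p$ simultaneously (the paper records precisely this as the remark \eqref{optimal} after its proof, rather than building it into the proof). The paper's route is leaner and formally requires only the value $\inf_e\|E(e)\|=\la$, not that $E(e)$ is proportional to a projection.
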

\begin{proof}By monotonicity,
\begin{align*}
&D_{\frac{1}{2}}(\rho||\N)\le D_p(\rho||\N) \le D_\infty(\rho||\N) \le D_\infty(\rho||E(\rho)),\\
&D_{\frac{1}{2}}(\rho||\N)\le D_{\frac{1}{2}}(\rho||E(\rho))\le D_p(\rho||E(\rho))\le D_\infty(\rho||E(\rho))\pl.
\end{align*}
Then it suffices to prove that
\[\sup_{\rho\in S(\M)}D_{\frac{1}{2}}(\rho||\N)\ge -\log \la(\M:\N) \pl.\]
Note that
\begin{align*}D_{\frac{1}{2}}(\rho||\N)&=\inf_{\si\in S(\N)} D_{\frac{1}{2}}(\rho||\si)=\inf_\si -2\log \norm{\si^{\frac{1}{2}}\rho^\frac{1}{2}}{1}\\
&=-2\log\sup_\si \norm{\si^{\frac{1}{2}}\rho^\frac{1}{2}}{1}\pl.
\end{align*}
Let $e=\text{supp}(\rho)$ be the support projection of $\rho$. By H\"older inequality, for any $\si\in S(\N)$,
\begin{align*} \norm{\si^{\frac{1}{2}}\rho^{\frac{1}{2}}}{1}
&\le \norm{\si^{\frac{1}{2}}e}{2} \norm{\rho^{\frac{1}{2}}}{2}
\\&= tr(\si e)^{\frac{1}{2}}= tr(\si E(e))^{\frac{1}{2}}\le \norm{E(e)}{\infty}^{\frac{1}{2}}.
\end{align*}
Therefore, $D_{\frac{1}{2}}(\rho||\N)\ge -\log \norm{E(e)}{\infty}$ and
\begin{align*}
\sup_\rho D_{\frac{1}{2}}(\rho||\N)&\ge -\log \inf\{\norm{E(e)}{} |\pl  e \pl\text{projection in $\M$} \}\pl.
\end{align*}
When $\M,\N$ are II$_1$ factors or finite dimensional, the infimum at the right hand side equals $\la(\M:\N)$ by \cite[Theorem 2.2 \& Corollary 5.6]{pipo}. That completes the proof.
\end{proof}

The above theorem used the monotonicity of $D_p$ over $p$ and the following equality
\begin{align} \label{key} \max \{\la | \la x\le E(x)\pl \forall\pl x \in \M_+ \}= \inf\{\norm{E(e)}{} |\pl  e \pl\text{projection in $\M$} \}\pl. \end{align}
This equality was proved in \cite{pipo} for II$_1$ factors and finite dimensional von Neumann algebras. While "$\le$" direction always holds from convexity, the converse inequality is open in general. In both finite dimensional or subfactor cases, it follows from the fact that there exists a projection $e_0\in \M$ such that $E(e_0)$ is again a projection up to the constant $\la(\M:\N)$. Let $\rho_0=tr(e)^{-1}e$ be the normalized density of $e$. $D_p(\rho_0||\N)$ attains the index for all $1/2\le p\le \infty$,
\begin{align}\sup_{\rho\in S(\M)}D_p(\rho||\N)=D_p(\rho_0||\N)\pl =D_p(\rho_0||E(\rho_0))\pl. \label{optimal}\end{align}
Let us briefly review the value of $\la(\M:\N)$ and the optimal density $\rho_0$ from \cite{pipo}. For II$_1$ factors $\N\subset\M$, there is a projection $e\in \M$ such that $E(e)=[\M:\N]^{-1}1$ (\cite[Lemma 3.1.8]{Jones}). This implies \[\la(\M:\N)^{-1}=[\M:\N]\pl,\]

Let $\N\cong \oplus_{k} M_{n_k}, \M\cong \oplus_{l}M_{m_l}$ be a pair of finite dimensional von Neumann algrebas. Suppose the unital inclusion $\iota: \N \hookrightarrow \M$ is given by
\[\iota(\oplus_{k}x_k)=\oplus_{l}(\oplus_{k}x_k\ten 1_{a_{kl}})\pl.\]
Here $1_{n}$ denotes the identity matrix in $M_n$ and $a_{kl}$ is called the inclusion matrix, which means that each block $M_{m_l}$ of $\M$ contains ${a_{kl}}$ copy of $M_{n_k}$ blocks from $\N$. Let $t_l$ be the trace of minimal projection in $M_{m_l}$ block of $\M$ and $s_k$ be the trace of minimal projection in $M_{n_k}$ block of $\N$. Then $s=(s_k), t=(t_l),n=(n_k),m=(m_l)$ as column vectors satisfy $s=A t$ and $m=A^T n$, where $A=(a_{kl})$ and $A^T$ is the transpose of $A$.

  Based on Theorem \ref{index}, it is equivalent to maximize $D(\rho||E(\rho))=H(E(\rho))-H(\rho)$.
By convexity of $D(\cdot||\N)$, it suffices to consider a minimal projection $e=\ket{\psi}\bra{\psi}\ten 1_{t_l}$ in one block $M_{m_l}$. Then  $\rho=\ket{\psi}\bra{\psi}\ten \frac{1_{t_l}}{t_l}$ is the normalized density and $H(\rho)=\log t_l$. Denote $P_{k,i}$ be the projection in $M_{m_l}$ corresponding to the $i$th copy of $M_{n_k}$ and write $\ket{\psi_{k,i}}=P_{k,i}\ket{\psi}$. The conditional expectation of $\rho$ is given by
\[E(\rho)=\oplus_{k}(\sum_{i=1}^{a_{kl}}\ketbra{\psi_{k,i}})\ten \frac{1}{s_k}1_{s_k}\pl.\]
The largest possible rank of $E(\rho)$ is $\sum_{k}\min (a_{kl},n_k)s_k$ because the part in the $M_{n_k}$ block of $\N$ \[\sum_{i=1}P_{i,k}\ketbra{\psi}P_{i,k}=\sum_{i=1}^{a_{kl}}\ketbra{\psi_{k,i}}\] is of rank at most $\min (a_{kl},n_k)$. Then the maximal entropy $H(E(\rho))$ is attained by choosing $\ketbra{\psi_{k,i}}$ mutually orthogonal and $\norm{\psi_{k,i}}{}^2=\frac{s_k}{\sum_{k}{\min (a_{kl},n_k)}s_k}$. In this case,
\[E(\rho)=\oplus_{k}(\sum_{i=1}^{a_{kl}}\ketbra{\psi_{k,i}})\ten \frac{1}{s_k}1_{s_k}=\frac{1}{\sum_{k}{\min (a_{kl},n_k)}s_k}\oplus_{k}(\sum_{i=1}^{a_{kl}}\ketbra{\tilde{\psi}_{k,i}})\ten 1_{s_k}\]
where $\ket{\tilde{\psi}_{k,i}}=\ket{\psi_{k,i}}/\norm{\psi_{k,i}}{2}$ are unit vectors. Then
\begin{align*}
D(\rho||E(\rho))=&H(E(\rho))-H(\rho)=\log \sum_{k}\min (a_{kl},n_k)s_k-\log t_l
\\=&\log \sum_{k}\min (a_{kl},n_k)s_k/t_l\pl.
\end{align*}
Taking the maximum over the block $M_{m_l}$ of $\M $ leads to the formula in \cite[Theorem 6.1]{pipo},
\begin{align}\label{formula} \max_\rho D(\rho||\N)=-\log \la(\M:\N)=\log \max_l \sum_{k}\min (a_{kl},n_k)s_k/t_l\pl.\end{align}

%For the upper bound, let $e_l$ be the projection of $M_{m_l}$ in $\M$. Note that
%\[e_l\E_\N(e)= \oplus_{k}(\sum_{i=1}^{a_{kl}}\ketbra{\psi_{k,i}})\ten \frac{t_l}{s_k}1_{a_{kl}} \]
%Write $b_k=\frac{1}{s_k}\sum_{i=1}^{a_{kl}}\ketbra{\psi_{k,i}})$. There $\la \E_\N(e)\ge e$ if and only if the matrix
%\[(b_k^{-\frac{1}{2}}\ket{\psi_{k,i}}\bra{\psi_{k',i'}}b_{k'}^{-\frac{1}{2}})_{ki,k'i'}\le t_l\la \pl,\]
%which by a column-row trick is equivalent to
%\[\sum_{i,k}\bra{\psi_{k,i}}b_k^{-1}\ket{\psi_{k,i}}\le t_l\la\pl.\]
%Moreover,
%\[\sum_i\bra{\psi_{k,i}}b_k^{-1}\ket{\psi_{k,i}}=\sum_ks_ktr(b_kb_k^{-1})=\sum_ks_ktr(supp (b_k))\le \sum_k s_k\min(a_{kl},n_k)\pl.\]
%Therefore $\la^{-1}(\M,\N)\le \max_l\sum_k \min(a_{kl},n_k)s_k/t_l$.

Motivated from above we introduce for finite von Neumann algebras $\N\subset\M$, the relative entropy $D(\M||\N)$ and its R\'enyi version $D_p(\M||\N)$
\begin{align*}&D(\M||\N):=\sup_{\rho\in S(\M)} D(\rho||\N)\pl,  \pl D_p(\M||\N):=\sup_{\rho\in S(\M)} D_p(\rho||\N)\end{align*}
As a consequence of Theorem \ref{index}, for II$_1$ subfactors or finite dimensional $\N\subset\M$, $D_p(\M||\N)=D(\M||\N)$ is independent of $p$, while in general such equality is open. These definitions are different with the Connes-Stormer relative entropy
\[ H(\M|\N)=\sup_{\sum_i x_i=1}\sum_{i}tr(x_i\log x_i-x_i\log E(x_i))\]
where the supreme runs over all partition of unity $\sum_{i}x_i=1, x_i\ge 0$. We now discuss the relation between $\la(\M:\N)$, $D_p(\M||\N)$ and $H(\M|\N)$.

\begin{prop}\label{compare}Let $\N\subset \M$ be finite von Neumann algebras.\begin{enumerate}
\item[i)]
$D_p(\M||\N)$ is monotone for $p \in [1/2, \infty]$.
\item[ii)]
For $1\le p\le \infty$, \[-\log\la(\M:\N)\ge D_p(\M||\N)\ge H(\M|\N)\pl.\]
\item[iii)]
If $\N\subset \M$ are II$_1$ subfactors or finite dimensional, then for $\frac{1}{2}\le p\le \infty$,
\[-\log\la(\M:\N)= D_p(\M||\N)\pl.\]
\end{enumerate}
\end{prop}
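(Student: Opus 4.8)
The plan is to deduce every part from the single-density facts already established: the monotonicity in $p$ and the $p=1$ identity of Proposition~\ref{basic}, the reformulation \eqref{in} of the Pimsner-Popa index, and Theorem~\ref{index}. The only genuinely new input is a short computation that exhibits the Connes-St\"ormer expression as a convex average of the quantities $D(\rho||\N)$.

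Part i) is immediate: by Proposition~\ref{basic}~ii) the map $p\mapsto D_p(\rho||\N)$ is non-decreasing on $[1/2,\infty]$ for each density $\rho$, and passing to $\sup_{\rho\in S(\M)}$ preserves monotonicity. For the upper bound in part ii), I would use that $E(\rho)\in S(\N)$ together with the monotonicity in $p$ to get $D_p(\rho||\N)\le D_p(\rho||E(\rho))\le D_\infty(\rho||E(\rho))$ for every density $\rho$; taking the supremum over $\rho$ and applying \eqref{in} gives $D_p(\M||\N)\le\sup_\rho D_\infty(\rho||E(\rho))=-\log\la(\M:\N)$.

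For the lower bound $H(\M|\N)\le D_p(\M||\N)$ it suffices, by part i), to treat $p=1$. Let $\sum_i x_i=1$ be a partition of unity with $x_i\in\M_+$; the terms with $tr(x_i)=0$ vanish, so assume $p_i:=tr(x_i)>0$ and put $\rho_i:=x_i/p_i\in S(\M)$. Since each $p_i$ is a scalar, $\log x_i=\log\rho_i+(\log p_i)1$ and, by linearity of $E$, $\log E(x_i)=\log E(\rho_i)+(\log p_i)1$; the $\log p_i$ contributions cancel and Proposition~\ref{basic}~iv) gives
\[
\sum_i tr\big(x_i\log x_i-x_i\log E(x_i)\big)=\sum_i p_i\,D(\rho_i||E(\rho_i))=\sum_i p_i\,D(\rho_i||\N).
\]
Since $D(\rho_i||\N)\le D(\M||\N)$ for each $i$ and $\sum_i p_i=1$, the right-hand side is at most $D(\M||\N)$; taking the supremum over partitions of unity yields $H(\M|\N)\le D(\M||\N)\le D_p(\M||\N)$ for $p\ge 1$. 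Finally, part iii) is a direct restatement of Theorem~\ref{index}, which under the subfactor or finite-dimensional hypothesis identifies $-\log\la(\M:\N)$ with $\sup_{\rho\in S(\M)}D_p(\rho||\N)=D_p(\M||\N)$ for all $1/2\le p\le\infty$.

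I expect the only step with genuine content to be the lower bound in part ii); the minor point there is the bookkeeping around supports and the generalized inverse, ensuring that $\log(cx)=\log x+(\log c)1$ and Proposition~\ref{basic}~iv) apply verbatim to the possibly non-faithful $\rho_i$. Since those conventions are already built into the definitions, nothing substantial remains.
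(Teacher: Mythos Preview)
Your proof is correct and follows the same overall strategy as the paper: parts i), iii), and the upper bound in ii) match exactly, and the lower bound in ii) rests on the same computation---rewriting each summand in the Connes--St\o rmer formula as $p_i\,D(\rho_i||E(\rho_i))=p_i\,D(\rho_i||\N)$ and then bounding by $D(\M||\N)$.

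The only difference is in the packaging of that last step. The paper assembles the partition $\{p_i,\rho_i\}$ into a single classical--quantum density $\rho=\sum_i p_i\,\ketbra{i}\otimes\rho_i\in l_\infty^n(\M)$, identifies the sum with $D(\rho\,||\,id\otimes E(\rho))$, and then invokes the convexity fact $D(l_\infty^n(\M)||l_\infty^n(\N))=D(\M||\N)$. You bypass this amplification entirely and bound the convex combination $\sum_i p_i\,D(\rho_i||\N)\le D(\M||\N)$ directly. Your route is marginally more elementary and avoids a slip in the paper's displayed intermediate formula (an extraneous $-\sum_i p_i\log p_i$ term that in fact cancels); the paper's route has the small conceptual payoff of foreshadowing the later amplification arguments, but neither approach gains anything substantive over the other here.
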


\begin{proof}i) follows from the monotonicity of $D_p$. For ii), we have by \eqref{in} that
\[-\log\la(\M:\N)= \sup_\rho D_\infty(\rho||\E(\rho))\ge D_\infty(\M||\N)\ge D_p(\M||\N)\pl.\]
Let $x_i\in \M$ such that $\sum_{i=1}^n x_i=1$ and $x_i\ge 0$. Write $\tilde{x}_i=\frac{1}{tr(x_i)}x_i$ as the normalized density. Then
\[ H(\M|\N)=\sup_{\{p_i\},\tilde{x}_i}\sum_{i}p_i D(\tilde{x}_i||\E(\tilde{x}_i))-\sum_{i}p_i\log p_i=\sup_{\{p_i\},\tilde{x}_i}D(\rho||id \ten E_\N(\rho))\]
where $\rho=\sum_{i}p_i \ketbra{i}\ten \tilde{x}_i$ is a density operator in $l_\infty^n(\M)$. Here $l_\infty^n$ is the $n$-dimensional abelian $C^*$-algebra. It follows from convexity that for any finite $n$, $D(l_\infty^n(\M)||l_\infty^n(\N))=D(\M||\N)$.
Then for $1\le p\le \infty$,
\[H(\M|\N)\le \sup_n D(l_\infty^n(\M)||l_\infty^n(\N))=D(\M||\N)\le D_p(\M||\N)\le -\log \la(\M:\N)\pl.\]
iii) is a direct consequence of Theorem \ref{index}.
\end{proof}
\begin{rem}{\rm Recall that Petz's R\'enyi relative entropy is defined as
\[\tilde{D}_p(\rho||\si)=p'\log tr(\rho^p\si^{1-p})^{\frac{1}{p}}\pl.\]
For $p=\frac{1}{2}$, $D_\frac{1}{2}(\rho||\si)\le \tilde{D}_\frac{1}{2}(\rho||\si)$ by definition and for $1<p$,
it was proved in \cite[Corollary 3.3]{Jencova18} that $\tilde{D}_{2-\frac{1}{p}}(\rho||\si)\le D(\rho||\si)\le \tilde{D}_{p}(\rho||\si)$. Therefore, for $\N\subset\M$ II$_1$ factors or finite dimensional, the maximal relative entropy expression also holds for $\tilde{D}_{p}$ with $\frac{1}{2}\le p\le 2$,
\[-\log\la(\M:\N)= \tilde{D}_p(\M||\N):=\sup_{\rho\in S(\M)}\inf_{\si\in S(\N)} \tilde{D}_{p}(\rho||\si)\pl. \]}
\end{rem}

As observed in \cite{pipo}, $-\log \la(\M:\N)=D(\M||\N)$ does not always coincides with $[\M,\N]$ for finite dimensional subfactors. Indeed, for $n<m$, \[D(M_{n}\ten M_m||M_n)=\log \min(n,m)m\neq \log m^2 =\log [M_{n}\ten M_m:M_n]\pl.\] Moreover, the subfactors index satisfies the multiplicative properties \begin{enumerate}
\item[i)] for $\N\subset \M\subset \L$, $[\L:\N]=[\L:\M][\M:\N]$
\item[ii)] for $\N_1\subset \M_1,\N_2\subset \M_2$, $[\M_1\ten\M_2:\N_1\ten\N_2]=[\M_1:\N_1][\M_2:\N_2]$
\end{enumerate}
The follow proposition shows that this also differs with $D(\M||\N)$. Here and in the following, we use the notation $\ten$ for von Neumann algebra tensor product.
\begin{prop}\label{property} Let $\N,\M,\L$ be finite von Neumann algebras.
\begin{enumerate}
\item[i)] for $\N\subset \M\subset \L$, $D(\L||\N)\le D(\L||\M)+D(\M||\N)$;
\item[ii)] for $\N_1\subset \M_1,\N_2\subset \M_2$, $D(\M_1\ten \M_2||\N_1\ten \N_2)\ge D(\M_1||\N_1)+D(\M_2||\N_2)$.
\end{enumerate}
In general both inequalities can be strict.
\end{prop}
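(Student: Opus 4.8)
The plan is to deduce both statements from the $p=1$ identity $D(\rho\|\N)=D(\rho\|E(\rho))=H(E(\rho))-H(\rho)$ of Proposition~\ref{basic}(iv), together with compatibility of the trace preserving conditional expectations: $E_\N=E^{\M}_{\N}\circ E_\M$ for a tower $\N\subset\M\subset\L$, and $E_{\N_1\ten\N_2}=E_{\N_1}\ten E_{\N_2}$ for tensor products, both by uniqueness of the trace preserving conditional expectation. For i) I will establish the pointwise chain rule $D(\rho\|\N)=D(\rho\|\M)+D(E_\M(\rho)\|\N)$ for every $\rho\in S(\L)$ and then pass to the supremum; for ii) I will bound the supremum defining $D(\M_1\ten\M_2\|\N_1\ten\N_2)$ from below by testing against product states $\rho_1\ten\rho_2$.

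For i): since $\log E_\N(\rho)\in\N\subset\M$ and $\log E_\M(\rho)\in\M$, I will use $tr(\rho x)=tr(E_\M(\rho)x)$ for $x\in\M$ (and $E_\N(\rho)=E^{\M}_{\N}(E_\M(\rho))$) to rewrite
\[
D(\rho\|E_\N(\rho))=\big(tr(\rho\log\rho)-tr(\rho\log E_\M(\rho))\big)+\big(tr(E_\M(\rho)\log E_\M(\rho))-tr(E_\M(\rho)\log E_\N(\rho))\big),
\]
whose two summands are exactly $D(\rho\|E_\M(\rho))=D(\rho\|\M)$ and $D(E_\M(\rho)\|E_\N(\rho))=D(E_\M(\rho)\|\N)$ by Proposition~\ref{basic}(iv). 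Since $E_\M(\rho)\in S(\M)$, the right side is at most $D(\L\|\M)+D(\M\|\N)$, and taking the supremum over $\rho\in S(\L)$ proves i). The only point to dispatch is the status of the chain rule for infinite relative entropies, which is harmless: the inequality is vacuous unless $D(\L\|\M)$ and $D(\M\|\N)$ are both finite, and then both summands above are finite and the displayed identity is literal.

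For ii): for $\rho_j\in S(\M_j)$ one has $\rho_1\ten\rho_2\in S(\M_1\ten\M_2)$ and $E_{\N_1\ten\N_2}(\rho_1\ten\rho_2)=E_{\N_1}(\rho_1)\ten E_{\N_2}(\rho_2)$, so Proposition~\ref{basic}(iv) together with additivity of Umegaki's relative entropy, $D(a_1\ten a_2\|b_1\ten b_2)=D(a_1\|b_1)+D(a_2\|b_2)$, gives $D(\rho_1\ten\rho_2\|\N_1\ten\N_2)=D(\rho_1\|\N_1)+D(\rho_2\|\N_2)$; taking the supremum over $\rho_1,\rho_2$ yields the claimed $\ge$.

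For strictness I will exhibit finite dimensional examples and evaluate them via \eqref{formula}. For i), take $\mathbb{C}1\subset\mathbb{C}^2\subset M_2$ with $\mathbb{C}^2$ the diagonal matrices, $\mathbb{C}1$ the scalars, and the normalized trace on $M_2$; then \eqref{formula} gives $D(M_2\|\mathbb{C}^2)=D(\mathbb{C}^2\|\mathbb{C}1)=D(M_2\|\mathbb{C}1)=\log 2$, so $D(\L\|\N)=\log 2<2\log 2=D(\L\|\M)+D(\M\|\N)$. For ii), take $\N_1=\mathbb{C}1\subset\M_1=M_2$ and $\N_2=\M_2=M_2$, so the right side of ii) equals $\log 2+0$; on the other hand, for $\N_1\ten\N_2=\mathbb{C}1_2\ten M_2\subset M_2\ten M_2$ and a maximally entangled pure density $\rho$ on $M_2\ten M_2$ one computes $E_{\N_1\ten\N_2}(\rho)=1_2\ten 1_2$, whence $D(\rho\|\N_1\ten\N_2)=-H(\rho)=\log 4>\log 2$. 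The computations are routine once the identities above are in place; the conceptual obstacle lies entirely in ii): one must realize it genuinely cannot be an equality, since product states do not exhaust $S(\M_1\ten\M_2)$ and a correlated (entangled) state can be strictly farther from $\N_1\ten\N_2$ than any product state — exactly the phenomenon that makes the completely bounded version $D_{cb}$, rather than $D$ itself, the multiplicative, index-like invariant.
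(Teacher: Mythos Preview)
Your argument for i) and ii) is correct and is essentially the paper's proof: both derive the chain rule $D(\rho\|\N)=D(\rho\|\M)+D(E_\M(\rho)\|\N)$ from $D(\rho\|\N)=H(E_\N(\rho))-H(\rho)$ and $E_\N=E_\N^\M\circ E_\M$, and both obtain ii) by testing the supremum against product states and using additivity of Umegaki's relative entropy on tensors.

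The strictness examples differ. For i) the paper takes the factor tower $\mathbb{C}\subset M_2\subset M_4$ (giving $\log 4<\log 4+\log 2$), while you take $\mathbb{C}\subset\mathbb{C}^2\subset M_2$; both are correct, yours is smaller. For ii) the paper uses $M_2\subset M_6$ and $M_3\subset M_6$ (so that $D(M_{36}\|M_6)=\log 36>\log 24$), while you take $\N_2=\M_2$ trivial and exhibit the gap directly via a maximally entangled state. Your example makes the source of strictness more transparent---the excess on the tensor side comes purely from entanglement between the two factors---and it is precisely this phenomenon that the paper highlights in the remark immediately following the proposition. One small comment: your identity $D(\rho\|\N_1\otimes\N_2)=-H(\rho)=\log 4$ presumes the normalized trace on $M_2\otimes M_2$ (so that $E(\rho)=1$ has zero entropy); this is consistent with the paper's conventions and with formula~\eqref{formula}, but you might say so explicitly.
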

\begin{proof} i) Let $E_\M$ (resp. $E_\N$) be the conditional expectation from $\L$ onto $\M$ (resp. $\N$). Because $E_\N\circ E_\M=E_\N$, for $\rho\in S(\L)$,
\begin{align*}D(\rho||\N)&=H(E_\N(\rho))-H(\rho)=H(E_\N(\rho))-H(E_\M(\rho))+H(E_\N(\rho))-H(\rho)
\\ &=D(E_\M(\rho)||\N)+D(\rho||\M)\le D(\M||\N)+D(\L||\M)\end{align*}
which proves i). For the strict inequality case,  we have \begin{align*} D(M_{4}||M_2)=\log 4\pl, \pl D(M_{2}||\C)=\log 2\pl, \pl D(M_{4}||\C)=\log 4\neq D(M_{4}||M_2)+D(M_{2}||\C)\pl.\end{align*}
For ii), let $E_i, i=1,2$ be the conditional expectation from $\M_i$ to $\N_i$. The inequality follows from that
\[D(\rho||E_1(\rho))+D(\si||E_2(\si))=D(\rho\ten \si ||E_1(\rho)\ten E_2(\si))\le D(\M_1\ten \M_2||\N_1\ten \N_2)\pl.\]
This inequality is strict for the case
\begin{align*}
&D(M_{6}||M_2)=\log 6\pl, \pl D(M_{6}||M_3)=\log 4\pl,\\  &D(M_{36}||M_6)=\log 36\neq D(M_{6}||M_2)+D(M_{6}||M_3) \end{align*}
Another example is $\N=({M_2}\ten \C 1_{3}) \oplus ({M_3}\ten \C 1_{2})\subset M_{12}=\M$. Then \begin{align*}&D(M_{12}||\N)=\log(4+6)=\log10\pl, \\&D(M_{12}\ten M_{12}||\N\ten \N)=\log(4\times 9+6\times 6+6\times 6+4\times 4)=\log 126\pl. \qedhere\end{align*}
\end{proof}
\begin{rem}{\rm Form the above example, we know that there exists a bipartite state $\rho\in M_{12}\ten M_{12}$ such that \[D(\rho_1||\N)+D(\rho_2||\N)<D(\rho||\N\ten \N)\pl,\] where $\rho_1$ and $\rho_2$ are the reduced densities of $\rho$ on each component. Hence the relative entropy to subalgebra is {\bf super-additive}. The super-additivity implies that $\rho$ is an entangled state, which means $\rho$ is not a convex combination of tensor product densities. }
\end{rem}
The following is an example of left regular representation of finite groups.
\begin{exam}{\rm Let $G$ be a finite group. Consider the left regular representation $\la:G\to B(l_2(G))$ and the gourp von Neumann algebra $L(G)=\text{span} \la(G)\subset B(l_2(G))$. For a subgroup $H\subset G$, denote $L(H)$ as the subalgebra generated by $\la(H)$. Then for inclusion $L(H)\subset L(G)$,
\[D(L(G)||L(H))=\log[G:H]\pl.\]
To see that, first by Peter-Weyl formula, $L(G)\cong \oplus_{k} M_{n_k}\ten \C 1_{n_k}$ and $|G|=\sum_{k}n_k^2$. We use the formula \eqref{formula},
\[D(L(G)||\C)=\log|G|\pl ,\pl D(B(l_2(G))||L(G))=\log(\sum_{k}n_k^2)=\log|G|.\]
Consider $G=H\cup Hg_{1} \cup \cdots  Hg_{n-1}$ decomposed as a disjoint union of cosets and $n=[G:H]$. Let $P_i$ be the projection onto $l_2(Hg_{i})$ as a subspace of $l_2(G)$. So $L(H)$ is a left regular representation of $H$ of multiplicity $n$ on $\oplus_i P_il_2(G)=l_2(G)$. Thus
\[D(L(H)||\C)=\log|H|\pl, D(L(G)||L(H))\ge \log [G:H]\]
by Proposition \ref{property} i) for the inclusion $\mathbb{C}\subset L(H)\subset L(G)$. On the other hand, the conditional expectation $E_H:L(G)\to L(H)$ is given by
\[E_{H}(\sum_{g\in G}\al_g\la(g))=\sum_{g\in H}\al_g\la(g)=\sum_i{P_{i}(\sum_{g\in G}\al_g\la(g))P_i}\pl,\]
where $\la(g)$ is the unitary of left shifting by $g$. For $g\notin H$, $P_i\la(g)P_i=0$ because for any $h_1,h_2\in H$, $gh_1g_i=h_2g_i$ implies $g=h_2h_1^{-1}\in H$. Note that the trace on $L(G)$ coincides with the induced normalized matrix trace of $B(l_2(G))$. Consider $\N=\oplus B(l_2(Hg_i))\subset B(l_2(G))$. We have $D(B(l_2(G))||\N)=\log n$ and $E_\N(\rho)=\sum_{i}P_i\rho P_i$ is the conditional expectation. Thus
\begin{align*}
D(L(G)||L(H))&=\sup_{\rho\in L(G)}D(\rho||E_H(\rho))= \sup_{\rho\in L(G)}D(\rho||E_\N(\rho)) \\ &\le \sup_{\rho\in B(l_2(G))}D(\rho||E_\N(\rho))=D(\M||\N)=\log n\end{align*}
Therefore we obtain $D(\M||\N)= [G:H]$.}
\end{exam}
The continuity of $D(\cdot || \N)$ follows from that $D(\M||\N)<\infty$, as in \cite[Lemma 7]{winter}
\begin{prop}Suppose $D(\M||\N)<\infty$.
If $\rho$ and $\si$ are two densities of $\M$ such that $\norm{\rho-\si}{1}=\epsilon$, then
\[| D(\rho||\N)-D(\si||\N)| \le 2\epsilon D(\M||\N)+ (1+2\epsilon)h(\frac{2\epsilon}{1+2\epsilon}).
\]
Here $h(\la)=-\la\log \la -(1-\la)\log (1-\la)$ is the binary entropy function.
\end{prop}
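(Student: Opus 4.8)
The plan is to mimic the proof of the analogous continuity estimate for conditional entropy given by Winter \cite{winter}, adapting it to the relative entropy to a subalgebra. The essential structure is that $D(\rho||\N)=H(E(\rho))-H(\rho)$ by Proposition \ref{basic} iv), so controlling $|D(\rho||\N)-D(\si||\N)|$ reduces to controlling $|H(\rho)-H(\si)|$ and $|H(E(\rho))-H(E(\si))|$ simultaneously. The subtlety is that the naive Fannes--Audenaert bound on each term involves the \emph{dimension} of the algebra, which is meaningless for II$_1$ factors; the point of the hypothesis $D(\M||\N)<\infty$ is precisely to replace the dimension factor by $D(\M||\N)$.

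First I would set up the standard coupling: given $\rho,\si$ with $\norm{\rho-\si}{1}=\e$, write $(\rho-\si)_+$ and $(\rho-\si)_-$ for the positive and negative parts, so both have trace $\e/2$ (after normalizing to densities $\omega_+,\omega_-$). The key device is to form, on $l_\infty^2(\M)=\mathbb{C}^2\ten \M$ (or more precisely an auxiliary $3$-level system following Winter's refinement), a pair of block-diagonal densities that agree on a large block and differ only on a block of weight $O(\e)$, apply subadditivity/concavity of von Neumann entropy, and then invoke the monotonicity of $D(\cdot||\N)$ under the partial trace over the classical register together with $D(l_\infty^n(\M)||l_\infty^n(\N))=D(\M||\N)$ established in the proof of Proposition \ref{compare}. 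Concretely, I would consider the state $\rho' = (1-\e')\,\tfrac{1}{1-\e'}(\rho\wedge\si\text{-type part})\ten\ketbra{0} + \e'\,\omega\ten\ketbra{1}$ on $\M\ten l_\infty^2$ with $\e'=2\e/(1+2\e)$, chosen so that both $\rho\ten\ketbra{0}$-type extensions of $\rho$ and of $\si$ can be compared to the \emph{same} $\rho'$; tracing out the flag and using joint convexity plus the exact additivity $D(\tau\ten\ketbra{j}||\N\ten l_\infty^2)=D(\tau||\N)$ gives two inequalities whose combination yields the claimed bound, with the $(1+2\e)h(2\e/(1+2\e))$ term coming from the entropy of the classical flag and the $2\e D(\M||\N)$ term from bounding $D(\omega||\N)\le D(\M||\N)$ on the small block.

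The main obstacle I anticipate is bookkeeping the flag construction so that the \emph{finiteness} of $D(\M||\N)$ is actually used in the right place: one must ensure that the error block's contribution is a genuine relative-entropy-to-$\N$ term (hence bounded by $D(\M||\N)$) rather than a bare entropy term (which would be infinite). This is exactly where the argument diverges from Fannes--Audenaert and requires the $l_\infty^n$-amplification identity from Proposition \ref{compare}; getting the constants $2\e$ and $2\e/(1+2\e)$ to come out exactly as stated is a matter of choosing $\e'=2\e/(1+2\e)$ and being careful that $\norm{\rho-\si}{1}=\e$ forces the overlap $\rho\wedge\si$ to have trace at least $1-\e/2$, so that the ``common part'' can be taken with weight $1-\e'$. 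I would also need the elementary fact that $D(\cdot||\N)$ is monotone under the conditional expectation onto $l_\infty^1(\M)\subset l_\infty^2(\M)$, which is immediate from Proposition \ref{basic} iii) applied to the CPTP map that discards the flag.

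Once these pieces are in place the proof is a two-line symmetrization: derive $D(\rho||\N)\le (1-\e')D(\rho_{\mathrm{common}}||\N)+\e' D(\M||\N)+H_{\mathrm{flag}}$ and the matching inequality for $\si$, subtract, and note $|H_{\mathrm{flag}}|\le (1+2\e)h(2\e/(1+2\e))$ after rescaling. I expect no difficulty in the von Neumann-algebraic generality here, since every tool invoked --- the entropy identity $D(\rho||\N)=H(E(\rho))-H(\rho)$, joint convexity, data processing, and the $l_\infty^n$-tensor identity --- has already been established in the excerpt for arbitrary finite von Neumann algebras.
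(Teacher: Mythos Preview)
Your proposal is correct and is exactly what the paper intends: the paper supplies no proof of its own but simply points to \cite[Lemma~7]{winter}, whose argument is precisely the flag-state/convexity construction you outline, with $D(\M||\N)$ playing the role of Winter's constant $\kappa=\sup_\rho D_C(\rho)$ and the identity $D(l_\infty^n(\M)||l_\infty^n(\N))=D(\M||\N)$ (already used in the proof of Proposition~\ref{compare}) ensuring the flag extension does not inflate the bound. One bookkeeping remark: Winter's parameter is the \emph{normalized} trace distance $\tfrac12\norm{\rho-\si}{1}$, so when you translate to the paper's convention $\norm{\rho-\si}{1}=\epsilon$ check that your choice of $\epsilon'$ actually reproduces the stated form $2\epsilon\,D(\M||\N)+(1+2\epsilon)h\!\big(\tfrac{2\epsilon}{1+2\epsilon}\big)$; the method is unchanged, but the numerical constants in the statement are looser than what Winter's sharp lemma would give.
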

Let $l_\infty^n$ be the $n$-dimensional abelian $C^*$-algebra. We know by convexity that tensoring with an commutative space $l_\infty^n$ does not change the relative entropy, \[D_p(l_\infty^n(\M)||l_\infty^n(\N))=D_p(\M||\N)\pl.\]
However this is not the case if we replace $l_\infty$ by a quantum system $M_n$. For finite von Neumann algebras $\N\subset\M$, we define the $cb$-relative entropy that for $1/2\le p\le \infty$
\begin{align*}
&D_{cb,p}(\M||\N):=\sup_{n}D_p(M_n(\M))||M_n(\N))
\end{align*}
In general, $D_{cb,p}(\M||\N)\ge D_p(\M||\N)$ and the inequality can be strict. In particular, for finite dimensional factors
\begin{align}\label{fd}&D_p(M_n\ten M_m||M_n)=mn=-\log \la(M_n\ten M_m: M_n)\pl,\nonumber\\
&D_{p,cb}(M_n\ten M_m||M_n)=m^2=\log [M_n\ten M_m: M_n]\pl.\end{align}
which are different when $n<m$. Using the properties of $D(\M||\N)$, we immediately obtain
\begin{cor}\label{pin}
\begin{enumerate}
\item[i)] $D_{p,cb}(\M||\N)$ is monotone for $p\in [1/2,\infty]$.
\item[ii)] If $\N\subset \M$ are II$_1$ factors or finite dimensional, $D_{p,cb}(\M||\N)$ is independent of $p$.
\item[iii)] For $\N\subset \M$ finite factors, \begin{align}\label{cb} \log[\M:\N]=D_{p,cb}(\M||\N) \pl.\end{align}
\end{enumerate}
\end{cor}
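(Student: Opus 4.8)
The plan is to derive all three parts from results already established, applied to the amplified inclusions $M_n(\N)\subset M_n(\M)$ and followed by a supremum over $n$; the point each time is simply that amplifying by $M_n$ keeps us inside the class to which the relevant earlier result applies ($M_n$ of a finite von Neumann algebra is finite, $M_n$ of a II$_1$ factor is a II$_1$ factor, $M_n$ of a finite-dimensional algebra is finite-dimensional), and that $D_p$ of a pair of states is insensitive to the normalization of the trace on the $M_n$-factor, so that $D_{p,cb}(\M||\N)=\sup_n D_p(M_n(\M)||M_n(\N))$ is unambiguously defined.

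For i), I would fix $n$ and apply Proposition \ref{compare} i) to the inclusion $M_n(\N)\subset M_n(\M)$ of finite von Neumann algebras, which shows that $p\mapsto D_p(M_n(\M)||M_n(\N))$ is non-decreasing on $[1/2,\infty]$; a pointwise supremum of non-decreasing functions is non-decreasing, so $p\mapsto D_{p,cb}(\M||\N)$ is non-decreasing. For ii), under the factor or finite-dimensional hypothesis the amplified inclusion $M_n(\N)\subset M_n(\M)$ belongs to the same class, so Theorem \ref{index} gives $D_p(M_n(\M)||M_n(\N))=-\log\la(M_n(\M):M_n(\N))$ with the right-hand side independent of $p$; taking the supremum over $n$ preserves $p$-independence.

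For iii), I would combine the identity of ii) with two classical facts recalled in the introduction: the Pimsner-Popa relation $\la(\cdot:\cdot)^{-1}=[\,\cdot:\cdot\,]$ for II$_1$ subfactors, and multiplicativity of the subfactor index, which together with $[M_n:M_n]=1$ yields $[M_n(\M):M_n(\N)]=[\M\ten M_n:\N\ten M_n]=[\M:\N]$. Hence $D_p(M_n(\M)||M_n(\N))=\log[\M:\N]$ for every $n$ and every $p\in[1/2,\infty]$, and the supremum over $n$ gives \eqref{cb}; when $[\M:\N]=\infty$ the same chain of equalities holds verbatim in $[0,\infty]$.

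I do not expect a genuine obstacle here — this is a corollary. The only points that need a line of justification are that amplification by $M_n$ preserves membership in the relevant class, the scale-invariance of $D_p$ under trace renormalization, and the right to quote multiplicativity of the subfactor index. If there is any conceptual content it is the contrast with the finite-dimensional non-factor situation, where $D_p(M_n(\M)||M_n(\N))$ may grow with $n$: for a subfactor pair it is constant in $n$, precisely because $[M_n:M_n]=1$, so the supremum defining $D_{p,cb}$ collapses to the single value $\log[\M:\N]$.
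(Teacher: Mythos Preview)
Parts i) and ii) are fine and match the paper's (implicit) reasoning. The gap is in iii). The statement covers \emph{finite} factors, meaning both II$_1$ factors and finite-dimensional matrix factors $M_k$. Your chain of equalities relies on the Pimsner--Popa identity $\la(\cdot:\cdot)^{-1}=[\,\cdot:\cdot\,]$, which, as you correctly note, holds for II$_1$ subfactors --- but it \emph{fails} for finite-dimensional factors. The paper makes this explicit just before the corollary: for $M_k\subset M_k\otimes M_m$ one has $-\log\la(M_k\otimes M_m:M_k)=\log(\min(k,m)\,m)$ while $\log[M_k\otimes M_m:M_k]=\log m^2$, and these differ when $k<m$. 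Consequently your claim that $D_p(M_n(\M)||M_n(\N))$ is constant in $n$ ``for a subfactor pair'' is false in the finite-dimensional case: there $D_p(M_n(M_k\otimes M_m)||M_n(M_k))=\log(\min(nk,m)\,m)$ genuinely grows with $n$ until $nk\ge m$, after which it stabilizes at $\log m^2$. The supremum over $n$ is what produces $\log[\M:\N]$, not a constant sequence.

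The paper accordingly treats the two cases separately: for II$_1$ subfactors it uses exactly your multiplicativity argument to get $D_{cb}(\M||\N)=D(\M||\N)=\log[\M:\N]$, while for finite-dimensional factors it simply quotes the explicit computation \eqref{fd}. Your II$_1$ half is correct; you need to add the finite-dimensional case, and there the mechanism is a genuine supremum rather than constancy in $n$.
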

\begin{proof}For iii), the finite dimensional case can be verified use the formula \eqref{fd}. For II$_1$ subfactors, $D_{cb}(\M||\N)=D(\M||\N)=\log [\M:\N]$ because subfactor index $[\M:\N]$ is multiplicative \cite[Proposition 2.1.15]{Jones}.\end{proof}

The above proposition suggests that (the exponential of) $D_{p,cb}$ is a extension of subfactor index $[\M:\N]$ to finite von Neumann algebras. Using the connection between $D_p(\rho||\N)$ and $\norm{\rho}{L_1^p(\N\subset \M)}$ for $1< p\le \infty$, we see that $D_p(\M||\N)$ is basically the norm of identity map from $L_1(\M)$ to $L_1^p(\N\subset \M)$. Indeed, it suffices to consider positive elements because for $x=yz$,
\begin{align*}\norm{x}{L_1^p(\N\subset\M)}\le \norm{y}{L_{2p'}(\N)L_{2p}(\M)}\norm{z}{L_{2p}(\M)L_{2p'}(\N)}\le \norm{yy^*}{L_1^p(\N\subset\M)}\norm{z^*z}{L_1^p(\N\subset\M)}\end{align*}
(see the Appendix for the definition of $L_{2p'}(\N)L_{2p}(\M)$ and $L_{2p}(\M)L_{2p'}(\N)$). Thus, for $1< p\le \infty$,
\begin{align*}
&D_p(\M||\N)=p'\log\norm{id:L_1(\M)\to L_1^p(\N\subset \M)}{}\end{align*}
For $\frac{1}{2}<p<1$ and $\frac{1}{2p}=\frac{1}{r}+\frac{1}{2}$, the relative entropy is
\begin{align*}
&D_p(\M||\N)=2p'\log\norm{id:L_2(\M)\to L^2_{(r,\infty)}(\N\subset \M)}{}\end{align*}
In Appendix Proposition \ref{os4}, we give a natural operator space structure of $L_\infty^{q}(\N\subset\M)$ as follows,
\[M_n(L_\infty^{q}(\N\subset\M))\cong L_\infty^{q}(M_n(\N)\subset M_n(\M))\pl.\]
Based on that, we show $D_{p,cb}$ are indeed given by the completely bounded norms.

\begin{theorem}\label{add}Let $1\le p\le \infty$ and $1/p+{1}/{p'}=1$.
\begin{enumerate}
\item[i)]for $1< p\le \infty$, $D_{p,cb}(\M||\N)=p'\log\norm{id: L_\infty^{p'}(\N \subset \M)\to \M}{cb}$.
\item[ii)] for $1\le p\le \infty$,
\begin{align}D_{p,cb}(\M||\N)=\sup_\R D_p(\R\ten\M||\R\ten\N)\pl.
\label{equal}\end{align}
where the supremum runs over all finite von Neumann algebra $\R$.
\item[iii)] For $\N_i\subset\M_i, i=1,2$ finite von Neumann algebras
    \begin{align}
    D_{p,cb}(\M_1\ten \M_2||\N_1\ten \N_2)\le D_{p,cb}(\M_1||\N_1)+D_{\infty,cb}(\M_2||\N_2) \pl.
    \end{align}
In particular, for $p=1$ and $\infty$, we have the additivity
\begin{align*}
&D_{cb}(\M_1\ten \M_2||\N_1\ten \N_2)=D_{cb}(\M_1||\N_1)+D_{cb}(\M_2||\N_2) \pl.\\
&D_{\infty,cb}(\M_1\ten \M_2||\N_1\ten \N_2)=D_{\infty,cb}(\M_1||\N_1)+D_{\infty,cb}(\M_2||\N_2)\pl.
\end{align*}
\end{enumerate}
\end{theorem}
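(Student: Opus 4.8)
The plan is to reduce all three parts to the operator-space structure of the amalgamated and conditional $L_p$-spaces set up in the appendix, starting from the scalar identity $D_p(\M||\N)=p'\log\norm{id:L_1(\M)\to L_1^p(\N\subset\M)}{}$ recorded just before the theorem. For \textbf{part i)}, I would first upgrade this to the completely bounded level: by Proposition \ref{os4}, $M_n(L_\infty^{q}(\N\subset\M))\cong L_\infty^{q}(M_n(\N)\subset M_n(\M))$, and since conditional $L_p$-spaces are the operator-space duals of amalgamated ones (\cite{JPmemo}), the same identification holds for $L_1^p$; hence $D_p(M_n(\M)||M_n(\N))=p'\log\norm{id_{M_n}\!\ten id:S_1^n[L_1(\M)]\to S_1^n[L_1^p(\N\subset\M)]}{}$, and taking $\sup_n$ gives $D_{p,cb}(\M||\N)=p'\log\norm{id:L_1(\M)\to L_1^p(\N\subset\M)}{cb}$. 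Now I would dualize once more: the adjoint of $id:L_1(\M)\to L_1^p(\N\subset\M)$ under the trace pairing is $id:L_\infty^{p'}(\N\subset\M)\to\M$ (using $L_\infty^{p'}(\N\subset\M)=L_1^p(\N\subset\M)^*$), and the cb-norm is unchanged under taking adjoints, which is exactly i).

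For \textbf{part ii)}, the inequality $\sup_\R D_p(\R\ten\M||\R\ten\N)\ge D_{p,cb}(\M||\N)$ is immediate from the choice $\R=M_n$, noting that $D_p$ is invariant under rescaling the trace, so it is irrelevant whether $M_n$ carries the normalized or the unnormalized matrix trace. For the reverse bound with $1<p\le\infty$ I would extend the operator-space identity of Proposition \ref{os4} from coefficients $M_n$ to an arbitrary finite von Neumann algebra $\R$, identifying $L_1^p(\R\ten\N\subset\R\ten\M)$ with the $L_1(\R)$-valued amalgamated space; then $\norm{id:L_1(\R\ten\M)\to L_1^p(\R\ten\N\subset\R\ten\M)}{}=\norm{id_\R\ten\big(id:L_1(\M)\to L_1^p(\N\subset\M)\big)}{}\le\norm{id:L_1(\M)\to L_1^p(\N\subset\M)}{cb}$ by the defining property of vector-valued $L_1$, and part i) closes the argument. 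The case $p=1$ I would treat by continuity in $p$: for fixed $\R$ and all $p>1$ one has $D(\R\ten\M||\R\ten\N)\le D_p(\R\ten\M||\R\ten\N)\le D_{p,cb}(\M||\N)$ by monotonicity (Proposition \ref{basic} ii)) and the case just proved, and letting $p\downarrow 1$ — with $D_{p,cb}(\M||\N)\downarrow D_{cb}(\M||\N)$, which one checks using Corollary \ref{pin} i) together with $\lim_{p\downarrow1}D_p=D$ — gives the bound; then take $\sup_\R$. (When $D_{cb}(\M||\N)=\infty$ both sides are $\infty$ since $\sup_\R\ge\sup_n$.)

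For \textbf{part iii)} I would factor the identity map of i) through an intermediate conditional $L_p$-space,
\[
L_\infty^{p'}(\N_1\ten\N_2\subset\M_1\ten\M_2)\longrightarrow L_\infty^{p'}(\N_1\ten\M_2\subset\M_1\ten\M_2)\longrightarrow\M_1\ten\M_2 .
\]
In the second arrow $\M_2$ acts as harmless matrix coefficients (by the von Neumann-coefficient version of Proposition \ref{os4}), so its cb-norm is $\exp(D_{p,cb}(\M_1||\N_1)/p')$. The first arrow ``de-conditions'' from $\N_2$ to $\M_2$ inside an $L_{p'}$-sandwich; by Hölder this only costs the completely bounded comparison of $L_{2p'}(\N_2)$ with $L_{2p'}(\M_2)$, whose worst case over $p'$ is the operator-norm comparison, i.e. a $1/p'$-power of the index, so its cb-norm is at most $\exp(D_{\infty,cb}(\M_2||\N_2)/p')$ — this is precisely why the second summand is always the $p=\infty$ quantity. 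Multiplying and applying $p'\log$ yields the stated inequality (and, by exchanging the roles of $\M_1,\M_2$, its symmetric counterpart). For the additivity at $p=1$ and $p=\infty$: the lower bound is the cb-version of the super-additivity in Proposition \ref{property} ii), obtained by applying that inequality to $M_{n_1}(\N_1)\subset M_{n_1}(\M_1)$ and $M_{n_2}(\N_2)\subset M_{n_2}(\M_2)$, using $M_{n_1}\ten M_{n_2}=M_{n_1n_2}$, and taking suprema (the $D_\infty$-analogue following from the Pimsner--Popa formula \eqref{formula} in the finite-dimensional case and from multiplicativity of the index for subfactors). The upper bound I would obtain uniformly for $p\in\{1,\infty\}$ from the chain rule $D_{p,cb}(\L||\N)\le D_{p,cb}(\L||\M)+D_{p,cb}(\M||\N)$ — which follows from Proposition \ref{property} i), its (easy) $D_\infty$-analogue, and part ii) — together with the absorption identity $D_{p,cb}(\M_1\ten\N_2||\N_1\ten\N_2)=D_{p,cb}(\M_1||\N_1)$ (immediate from part ii), since $\N_2$ is absorbed into the ancilla), applied to the tower $\N_1\ten\N_2\subset\M_1\ten\N_2\subset\M_1\ten\M_2$.

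The conceptual skeleton above is short; the real work lies in the amalgamated-$L_p$ bookkeeping, and that is where I expect the main obstacle. The two delicate points are: (a) promoting $M_n(L_\infty^{q}(\N\subset\M))\cong L_\infty^{q}(M_n(\N)\subset M_n(\M))$ to arbitrary finite von Neumann coefficients $\R$, which is needed both for the converse bound in ii) and for the matrix-coefficient step in iii); and (b) making the de-conditioning estimate in iii) rigorous, in particular showing that the inclusion $L_{2p'}(\N_2)\hookrightarrow L_{2p'}(\M_2)$, used completely boundedly inside the sandwich, is controlled by $\exp(D_{\infty,cb}(\M_2||\N_2)/p')$ uniformly in $p$. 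A secondary technical point is the limiting argument at $p=1$ in ii) (and the $D_\infty$ super-additivity used for the lower bound in iii)); these are exactly the kinds of estimates the appendix on amalgamated $L_p$-spaces is designed to supply.
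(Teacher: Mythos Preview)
Your outline for part i) is essentially the paper's argument, though you gloss over one point: $L_\infty^{p'}(\N\subset\M)$ is only a $w^*$-dense subspace of $L_1^p(\N\subset\M)^*$, not the full dual, so ``cb-norm is unchanged under taking adjoints'' needs justification. The paper handles this by restricting to positive elements and invoking Lemma \ref{sup}, which shows that pairing against positive elements of $L_\infty^{p'}$ already computes the $L_1^p$-norm.

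For part ii) with $p>1$ your route is genuinely different from the paper's. You want to promote Proposition \ref{os4} from $M_n$-coefficients to arbitrary finite von Neumann $\R$-coefficients and read off the bound from the cb-norm. The paper instead embeds $\R\subset B(H)$, extends the state to $B(H)\overline{\ten}\M$, applies data processing, and then approximates by finite-rank cut-downs $(e_n\ten 1)\phi(e_n\ten 1)$ to land back in $M_n(\M)$. Your approach is more structural and arguably cleaner \emph{if} the $\R$-coefficient identification holds completely isometrically; the paper's approach is more hands-on but avoids that extra input entirely.

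The real gap is your treatment of $p=1$ in part ii). You claim $D_{p,cb}(\M||\N)\downarrow D_{cb}(\M||\N)$ as $p\downarrow 1$, citing Corollary \ref{pin} i) and $\lim_{p\to 1}D_p=D$. Monotonicity gives existence of the limit and $\lim_{p\downarrow 1}D_{p,cb}\ge D_{cb}$, but the reverse inequality requires interchanging $\lim_{p\downarrow 1}$ with $\sup_n\sup_\rho$, which is not justified by pointwise convergence of $D_p(\rho||\N)$. Indeed, the paper explicitly states at the end of Section 3 that whether $D_{p,cb}=D_{cb}$ for general finite von Neumann algebras is open; your argument would in particular force $\inf_{p>1}D_{p,cb}=D_{cb}$, which is not known. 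The paper avoids this entirely: for $p=1$ it uses the $B(H)$-embedding and finite-rank approximation together with lower semicontinuity of Umegaki relative entropy (Araki), so that $D(\phi||id\ten E(\phi))\le\liminf_n D(\tilde{\phi}_n||id\ten E(\tilde{\phi}_n))$ with each $\tilde{\phi}_n$ living in some $M_{k_n}(\M)$.

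For part iii), your factorization through $L_\infty^{p'}(\N_1\ten\M_2\subset\M_1\ten\M_2)$ is morally the same decomposition as the paper's, but the paper executes the ``de-conditioning'' step at the level of densities rather than operator spaces: it proves the pointwise inequality $D_p(\rho||\si)\le D_p(\rho||\si_1)+D_\infty(\si_1||\si)$ via the analytic family $f(z)=\si^{-z/2}\si_1^{z/2}$ (three-lines), then infimizes over $\si_1\in\N_1\ten\M_2$ and $\si\in\N_1\ten\N_2$. This is shorter and sidesteps the need to identify the cb-norm of the first arrow with an index quantity. Your chain-rule/absorption argument for the $p=1$ upper bound is fine and matches the paper's computation.
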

\begin{proof}Note that $L_\infty^{p'}(\N\subset \M)\subset L_1^p(\N\subset\M)^*$ as $w^*$ dense subspace.
Using duality we have,
\begin{align*}\norm{id:L_1(\M)\to L_1^p(\N\subset\M)}{}=&\norm{id: L_1^p(\N\subset\M)^*\to L_\infty(\M)}{}\\\ge& \norm{id: L_\infty^{p'}(\N\subset \M)\to L_\infty(\M)}{}\pl.\end{align*}
Note that it suffices to consider positive element for $\norm{id:L_1(\M)\to L_1^p(\N\subset\M)}{}$ and $\norm{id: L_\infty^{p'}(\N\subset \M)\to L_\infty(\M)}{}$. Then by Lemma \ref{sup},
\begin{align*}\norm{id:L_1(\M)\to L_1^p(\N\subset\M)}{}=&\sup_{\rho\in \M,\text{density} }\norm{\rho }{L_1^p(\N\subset\M)}
\\=&\sup_{\rho\in \M, \text{density} } \sup_{x\ge 0, \norm{\pl x\pl }{L_\infty^{p'}(\N\subset\M)}\le 1 }tr(\rho x)
\\=&\sup_{x\ge 0, \norm{\pl x\pl }{L_\infty^{p'}(\N\subset\M)} \le 1}\norm{x}{\infty}\\=&\norm{id: L_\infty^{p'}(\N\subset \M)\to L_\infty(\M)}\pl.
\end{align*}
By Proposition \ref{os4},
\begin{align*} &\sup_n\norm{id:L_1(M_n(\M))\to L_1^p(M_n(\N)\subset M_n(\M))}{}\\=&
\sup_n\norm{id: L_\infty^{p'}(M_n(\N)\subset M_n(\M))\to M_n(\M)}{}
\\ =&
\sup_n\norm{id: M_n(L_\infty^{p'}(\N\subset \M))\to M_n(\M)}{}
\\ =&
\sup_n\norm{id:  L_\infty^{p'}(\N\subset \M)\to\M}{cb}\pl.
\end{align*}
Therefore,
\begin{align*}
&D_{p,cb}(\M||\N)=\sup_n D(M_n(\M)||M_n(\N))\\=&p'\log \sup_n\norm{id:L_1(M_n(\M))\to L_1^p(M_n(\N)\subset M_n(\M))}{}
\\ =& p'\log \norm{id:  L_\infty^{p'}(\N\subset \M)\to\M}{cb}
\end{align*}
This proves i). For ii), let $\R\subset B(H)$ and $\rho\in S(\R\ten\M)$ be a normal state on $\R\ten\M$. Let $\phi$ be a normal state on $B(H)\ten\M$ extending $\rho$. Let $\iota:\R\hookrightarrow B(H)$ be the inclusion. $\iota$ is a normal unital completely positive map. Its adjoint on the predual $\iota^\dag: B(H)_*\to R_*$ is the restriction
\[ \iota^\dag(\omega)=\omega|_\R\]
In particular, using the identification $B(H)_*\cong S_1(H)$ and $L_1(\R)=\R_*$, $\iota^\dag:S_1(H)\to L_1(\R)$ is a completely positive trace preserving map and we have
\[\rho=\iota^\dag\ten id_{\M_*}(\phi)\pl.\]
Since $\iota^\dag\ten id_{\M_*}$ send $L_1(B(H)\ten \N)$ to $L_1(\R\ten \N)$, we have by data processing inequality
\[D_p(\rho||\R\ten \N)\le D_p(\phi||B(H)\ten \N)\pl.\]
(Although $B(H)\ten \N\subset B(H)\ten \M$ are semifinite, the space $L_1^p(B(H)\ten \N\subset B(H)\ten\M)$ are defined analogously as in Appendix A.2. There exists a increasing sequence of finite rank projection $(e_n)\subset B(H)$ such that for $\phi_n=(e_n\ten 1)\phi (e_n\ten 1)$, $\lim_{n\to \infty}\norm{\phi_n-\phi}{1}=0$.
Take $\la_n=tr\ten tr_\M(\phi_n)$. We have $\lim_n \la_n=1$ and for the normalized density $\tilde{\phi}_n=\la_n^{-1}\phi_n$,
\[\lim_{n\to \infty}\norm{\tilde{\phi}_n-\phi}{1}=0\pl, \lim_{n\to \infty}\norm{id\ten E(\tilde{\phi}_n)-id\ten E(\phi)}{1}=0\]
For each $n$, $\tilde{\phi}_n$ is also a density of $M_{k_n}(\M)$ for $k_n=\dim (e_nH)$. For $p=1$, by the lower-semicontinuity \cite{Araki76},
\begin{align*} D(\phi|| B(H)\ten \N)=&D(\phi|| id\ten E(\phi))\le \liminf_{n} D(\tilde{\phi}_n|| id\ten E(\tilde{\phi}_n))\\=&D(\tilde{\phi}_n||M_{k_n}(\N))\le D(M_{k_n}(\M)||M_{k_n}(\N))\le D_{cb}(\M||\N)\end{align*}
This proves ii) for $p=1$. For $1<p\le \infty$, we first assume
$D_p(\phi||B(H)\ten \N)$ is finite. Recall the norm expression \[D_p(\phi||B(H)\ten \N)=p'\log\norm{\phi}{L_1^p(B(H)\ten \N\subset B(H)\ten \M)}\pl.\]
where
\[\norm{\phi}{L_1^p(B(H)\ten \N\subset B(H)\ten \M)}=\inf_{\phi=ayb}\norm{a}{L_{2p'}(B(H)\ten \N)}\norm{y}{L_p(B(H)\ten \M)}\norm{b}{L_{2p'}(B(H)\ten \N)}\pl.\]
Given $a,b\in L_{2p'}(B(H)\ten \N)$ and $y\in L_p(B(H)\ten \M)$ such that $\phi=ayb$,
there exists a sequence of projection $(e_n)\subset B(H)$ that
\[\lim_{n}\norm{(e_n\ten 1) a -a}{2p}=\lim_{n}\norm{b(e_n\ten 1) -b}{2p}=0\pl, \lim_{n}\norm{(e_n\ten 1)\phi (e_n\ten 1)-\phi}{1}=0\]
Then
\begin{align*}
&\norm{\phi-(e_n\ten 1)\phi (e_n\ten 1)}{L_1^p(B(H)\ten \N\subset B(H)\ten \M)}\\ \le &\norm{\phi-(e_n\ten 1)\phi }{L_1^p(B(H)\ten \N\subset B(H)\ten \M)}\\&+\norm{(e_n\ten 1)\phi-(e_n\ten 1)\phi (e_n\ten 1)}{L_1^p(B(H)\ten \N\subset B(H)\ten \M)}
\\ \le &\norm{a-(e_n\ten 1)a}{2p'}\norm{y}{p}\norm{b}{2p'}+\norm{a-(e_n\ten 1)a}{2p'}\norm{y}{p}\norm{b(e_n\ten 1)-b}{2p'} \to 0
\end{align*}
Then for the normalized density $\tilde{\phi}_n$ defined as above,
\[\lim_n\norm{\tilde{\phi}_n}{L_1^p(B(H)\ten \N\subset B(H)\ten \M)}=\norm{\phi}{L_1^p(B(H)\ten \N\subset B(H)\ten \M)}\pl.\]
In terms of entropy, we have
\[D_p(\phi||B(H)\ten \N)=\lim_{n}D_p(\tilde{\phi}_n||B(H)\ten \N)=\lim_{n}D_p(\tilde{\phi}_n||M_{n_k}\ten \N)\le D_{p,cb}(\M||\N)\pl.\]
Now consider the case $D_p(\phi||B(H)\ten \N)=+\infty$,
By Lemma \ref{sup}, for any $N>0$ there exists positive $x\in (B(H)\ten\M)_+$ and $\norm{x}{L_\infty^{p'}(B(H)\ten\N\subset B(H)\ten\M)}=1$ such that
\[tr(\phi x)>N\pl.\]
Moreover, since $\phi\in L_1(B(H)\ten \N), x\in (B(H)\ten\M)_+$, there exits a finite rank projection $e\in B(H)$ such that
\begin{align}\label{modify}tr((e\ten 1)\phi(e\ten 1) x)\ge tr(\phi x)-1\ge N-1\pl.\end{align}
Suppose $\dim (eH)=n$.
Then $\tilde{\phi}=tr((e\ten 1)\phi)^{-1}(e\ten 1)\phi(e\ten 1)$ is a density of $M_n(\M)$ and $\tilde{x}=(e\ten 1)x(e\ten 1)\in M_n(\M)$ with $\norm{\tilde{x}}{L_\infty^{p'}(M_n(\N)\subset M_n( \M}\le 1$. Moreover, from \eqref{modify}
\[\norm{\tilde{\phi}}{L_1^p(M_n(\N)\subset M_n( \M))}\ge tr\ten tr_\M(\tilde{\phi} \tilde{x})=tr\ten tr_\M(\tilde{\phi} x)\ge N-1\pl.\]
This means $D_p(\tilde{\phi}|| M_n( \M))\ge p'\log (N-1)$.
Since $N$ can be arbitrary large, we have $D_{p,cb}(\M||\N)=+\infty$.
%Since $B(H)$ is approximate finite dimensional, we can find $\tilde{\rho}_n\in S(M_n(\M)), \tilde{\si}_n\in S(M_n(\N))$ such that
%\[\norm{\tilde{\rho}_n-\tilde{\rho}}{1}\to 0\pl, \norm{\tilde{\si}_n-\tilde{\si}}{1}\to 0\pl.\]
%By the lower-semicontinuity of $D_p$ for $1\le p\le \infty$ \cite[Proposition 3.7]{Jencova18},
%\[ D_p(\tilde{\rho}||\tilde{\si})\le \liminf_{n\to \infty} D_p(\tilde{\rho}_n||\tilde{\si}_n)\le \sup_n D_p(M_n(\M)||M_n(\N))=D_{p,cb}(\M||\N)\pl. \]

For iii), let $E_i:\M_i\to \N_i$ be the conditional expectation. For a density $\rho\in \R\ten\M_1\ten\M_2$,
\begin{align*}
D(\rho|| & id\ten E_1\ten E_2(\rho))=D(\rho|| id\ten id\ten E_2(\rho))+ D(id\ten id\ten E_2(\rho)|| id\ten E_1\ten E_2(\rho))\\&\le D(\R\ten\M_1\ten \M_2)|| \R\ten\M_1\ten \N_2)+D(\R\ten\M_1\ten \N_2)|| \R\ten\N_1\ten \N_2)
\\& \le D_{cb}(\M_1||\N_1)+D_{cb}(\M_2||\N_2)\pl.
\end{align*}
This proves the case $p=1$. For $p>1$, let
$\si \in \N_1
\ten\N_2$ be an invertible density
\[ \norm{\si^{-\frac{1}{2p'}}\rho \si^{-\frac{1}{2p'}}}{p}\le \norm{\si^{-\frac{1}{2p'}}_1\rho \si^{-\frac{1}{2p'}}_1}{p}\norm{\si_{}^{-\frac{1}{2p'}}\si^{\frac{1}{2p'}}_1}{\infty}^2  \]
for some invertible density $\si_1\in \N_1 \overline{
\ten}\M_2$. Consider the analytic family of operator $f(z)=\si^{\frac{-z}{2}}\si_1^{\frac{z}{2}}$. We have
\[ \norm{\si_{}^{-\frac{1}{2p'}}\si^{\frac{1}{2p'}}_1}{\infty}^2\le \norm{\si_{}^{-\frac{1}{2}}\si^{\frac{1}{2}}_1}{\infty}^{\frac{2}{p'}}=\norm{\si_{}^{-\frac{1}{2}}\si_1 \si_{}^{-\frac{1}{2}}}{\infty}^{\frac{1}{p'}}\pl. \]
In terms of relative entropy, we obtain
\[D_{p}(\rho||\si)\le D_{p}(\rho||\si_1)+D_\infty(\si_1||\si)\pl.\]
Taking infimum for both $\si_1\in \N_1\ten \M_2$ and $\si\in \N_1\ten \N_2$, we have
\[D_{p}(\rho||\N_1\ten \N_2)\le D_{p}(\rho||\N_1\ten \M_2)+D_\infty(\si_1||\N_1\ten \N_2)\pl.\]
Taking supremum over $\rho$, we have
\begin{align*}D_p(\M_1\ten \M_2||\N_1\ten \N_2)\le & D_p(\M_1\ten \M_2||\N_1\ten \M_2)+D_\infty(\N_1\ten \M_2||\N_1\ten \N_2)\\\le& D_{p,cb}(\M_1||\N_1)+D_{\infty,cb}(\M_2||\N_2)\pl.\end{align*}
Replacing $\N_1\subset\M_1$ by $\R\ten \N_1\subset\R\ten \M_1$ yields the inequality for $D_{p,cb}(\M_1\ten \M_2||\N_1\ten \N_2)$.
The converse equality follows from choosing tensor product elements.
\end{proof}

\begin{exam}{\rm Let $\N=\oplus_{k}M_{n_k}\ten \mathbb{C}1_{l_k}\subset M_m$ be a subalgebra where $l_k$ is the multiplicity of each block. By the formula \eqref{formula}, we have
 \[\textstyle -\log\la(M_m:\N)=D(M_m||\N)=\log\sum_{k} \min\{l_k,n_k\}l_k\pl, D_{cb}(M_m||\N)=\log\sum_{k} l_k^2 .\]
 In this case, it is clear to see that the $D_{cb}$ is additive but $D$ is not.
 }\end{exam}
Up to this writing, we do not know whether
$D_{cb,p}=D_{cb}$ independent of $p$ holds for general finite von Neumann algebras.
\section{Applications to decoherence time}
In this section, we discuss the applications to decoherence time of quantum Markov process. We start with the continuous time setting. Let $(\M,tr)$ be a finite von Neumann algebra $\M$ equipped with a faithful normal finite trace $tr$. A quantum Markov semigroup $\displaystyle (T_t)_{t\ge 0}:\M\to \M$ is a $w^*$-continuous family of maps that satisfies
\begin{enumerate}
\item[i)] $T_t$ is a normal unital completely positive (normal UCP) map for all $t\ge 0$.
\item[ii)] $T_t\circ T_s=T_{s+t}$ for any $t,s\ge 0$ and $T_0=id$.
\item[iii)] for each $x\in \M$, $t\to T_t(x)$ is continuous in ultra-weak topology.
\end{enumerate}For $\M=B(H)$, quantum Markov semigroups are also called GLKS equations in quantum physics (see \cite{GLKS}). They model
the evolution of open quantum systems which potentially interact with environments. We denote by $A$ the generator of $T_t$, i.e. $A$ is the operator densely defined on $L_2(\M)$ by \[Ax=\text{w$^*$-}\lim_{t\to 0^+} \frac{1}{t}(x-T_t(x))\pl,\]
whose domain is the set of all $x\in \M$ such that the weak$^*$ limit exists. We denote \begin{align}\N=\{ a\in \M \pl |\pl  T_t(a^*)T_t(a)=T_t(a^*a) \pl\text{and}\pl T_t(a)T_t(a^*)=T_t(aa^*) \pl, \forall \pl t\ge 0\}\label{domain}\end{align} as the common multiplicative domain of $T_t$. $\N$ is called \emph{decoherence-free subalgebra}. When $\N=\mathbb{C}1$ is trivial, $T_t$ is called primitive and has a unique invariant state. In general, $(T_t)_{t\ge 0}$ restricted on $\N$ is a semigroup of $*$-homomorphism. We will focus on the case that the semigroup $(T_t)_{t\ge 0}$ is self-adjoint, i.e. for all $x,y\in \M$ and $t\ge 0$,
$tr(x^*T_t(y))=tr(T_t(x)^*y)$. Then $T_t$ is also trace preserving $tr(T_t(\rho))=tr(\rho)$. Moreover $T_t(x)=x$ for all $x\in \N$ because for $a,b\in \N$,
\begin{align*}
tr(aT_{2t}(b))=tr(T_t(a)T_t(b))=tr(T_t(ab))=tr(ab)\pl.
\end{align*}
Let $E:\M\to \N$ be the trace preserving conditional expectation onto $\N$. By the above discussion, we have \[A\circ E =0 \pl, T_t\circ E= E\circ T_t=E \pl.\]

One important functional inequality which relates the convergence of relative entropy is the modified logarithmic Sobolev inequality (MLSI). We say $(T_t)_{t\ge 0}$ satisfies $\la$-modified logarithmic Sobolev inequality (or $\la$-MLSI) for $\la>0$ if for any density $\rho \in \text{dom}A$
\[ \la D(\rho||\N)\le I_A(\rho)=:tr\big((A\rho)\ln \rho\big)\pl,\]
where $I_A(\rho)$ is called the Fisher information or entropy production. This is equivalent to exponential decay of relative entropy \cite{CLSI,bardet}
\begin{align}D(T_t(\rho)||\N)=D(T_t(\rho)||E(\rho))\le e^{-\la t} D(\rho||E(\rho))=e^{-\la t} D(\rho||\N)\label{entropydecay}\pl.\end{align}
Combined with quantum Pinker inequality (c.f. \cite[Theorem 5.38]{watrous}), \[D(\rho||\si)\ge\frac{1}{2}\norm{\rho-\si}{1}^2, \] MLSI gives an estimate of \emph{decoherence time}
\[ t_{deco}(\epsilon)=\min \{t\ge 0\pl | \pl \norm{T_t(\rho)-E(\rho)}{1}\le \epsilon\pl \forall \pl \text{density} \pl \rho \in L_1(\M) \}. \]
Suppose $D(\M||\N)=\sup_\rho D(\rho||\N)<\infty$ is finite, we have
\begin{align}\label{deco} \la-\text{MLSI}\pl\Longrightarrow \pl t_{deco}(\epsilon)\le \frac{1}{\la} \big(2\log\frac{1}{\epsilon}+\log 2D(\M||\N)\big)\pl.\end{align}
Another functional inequality is the spectral gap (also called Poincar\'e inequality).
For $\la>0$, we say $(T_t)_t$ has $\la$-spectral gap (or $\la$-PI) if for any $x\in\M$,
\[ \la\norm{x-E(x)}{2}^2\le tr(x^*Ax)\]
Write $I$ as the identity map on $L_2(\M)$ and $I-E$ is the projection onto the orthgonoal complement $L_2(\N)^{\perp}$.
The spectral gap condition $\la$-PI is that
\begin{align}&\norm{A^{-1}(I-E):L_2(\M)\to L_2(\M)}{}\le \la^{-1} \nonumber\\
\text{or equivalently }
 \pl &\norm{T_t-E:L_2(\M)\to L_2(\M)}{}\le e^{-\la t} \label{2decay}\end{align}
Thus for each $x$, the $L_2$-distance between $T_t(x)$ and its equilibrium $E(x)$ decays exponentially. In general, $\la$-MLSI implies $\la$-PI \cite{bardet}, which means that the entropy decay \eqref{entropydecay} is stronger than $L_2$-norm decay \eqref{2decay}. The next theorem shows that the spectral gap condition also implies an exponential decay bound of relative entropy.
\begin{theorem}\label{d2}
Let $(T_t)_{t\ge 0}:\M\to \M$ be a self-adjoint quantum Markov semigroup and $\N$ be the common multiplicative domain of $T_t$. Suppose $T_t$ satisfies $\la$-PI. Then for density $\rho\in \M$,
\begin{align}D(T_t(\rho)||\N)\le 2e^{-\la t +D_{2}(\rho||\N)/2}\pl.\label{decay}\end{align}
If in additional $D_2(\M||\N)=\sup_\rho D_2(
\rho||\N)<\infty$, then
\[t_{deco}(\epsilon)\le \frac{1}{\la} \big(2\log\frac{2}{\epsilon}+D_2(\M||\N)/2\big)\]
\end{theorem}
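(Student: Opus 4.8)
The plan is to first reduce the decoherence‑time estimate to the decay bound \eqref{decay}, and then to prove \eqref{decay} by using the fact that $\N$ is a multiplicative domain to rewrite the quantity of interest through the $L_2$‑contraction $T_t-E$, at which point $\la$‑PI applies directly. For the reduction: by quantum Pinsker $D(\rho\|\si)\ge\frac12\norm{\rho-\si}{1}^2$ and $D(T_t(\rho)\|\N)=D(T_t(\rho)\|E(\rho))$ (Proposition \ref{basic}, using $E(T_t(\rho))=E(\rho)$), the bound \eqref{decay} gives $\norm{T_t(\rho)-E(\rho)}{1}\le 2\exp(-\la t/2+D_2(\rho\|\N)/4)$; solving $2\exp(-\la t/2+D_2(\rho\|\N)/4)\le\eps$ for $t$ and using $D_2(\rho\|\N)\le D_2(\M\|\N)$ uniformly in $\rho$ yields exactly $t_{deco}(\eps)\le\la^{-1}(2\log(2/\eps)+D_2(\M\|\N)/2)$. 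So the content lies in \eqref{decay}.

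For \eqref{decay}, the main observation is that since $\N$ is the common multiplicative domain of the $T_t$ and $T_t$ restricts to the identity on $\N$ (self‑adjointness), each $T_t$, and likewise $E$, is an $\N$‑bimodule map: $T_t(axb)=aT_t(x)b$ and $E(axb)=aE(x)b$ for $a,b\in\N$. Hence for any $\si\in S(\N)$, conjugation by $\si^{\pm 1/4}\in\N$ commutes with $T_t$ and with $E$. I would apply this with $\si=E(\rho)$, which by Proposition \ref{basic} is the optimal comparison state, and set $X:=\si^{-1/4}\rho\si^{-1/4}\in L_2(\M)_+$ (finite since $\rho\ll E(\rho)$, with $\norm{X}{2}^2=e^{D_2(\rho\|E(\rho))}$), so that $\si^{-1/4}T_t(\rho)\si^{-1/4}=T_t(X)$ and $E(X)=\si^{1/2}$ with $\norm{\si^{1/2}}{2}=1$.

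Then estimate $D(T_t(\rho)\|\N)=D(T_t(\rho)\|E(\rho))\le D_2(T_t(\rho)\|E(\rho))=\log\norm{T_t(X)}{2}^2$, using $D\le D_2$. Since $(T_t-E)E=0$, one has the decomposition $T_t(X)=E(X)+(T_t-E)(X-E(X))$, orthogonal in $L_2(\M)$ because $(T_t-E)(\cdot)\in L_2(\N)^\perp$ while $E(X)\in L_2(\N)$. The spectral gap, in the form $\norm{T_t-E:L_2(\M)\to L_2(\M)}{}\le e^{-\la t}$ from \eqref{2decay}, together with the Pythagorean identity gives $\norm{T_t(X)}{2}^2\le 1+e^{-2\la t}(\norm{X}{2}^2-1)=1+e^{-2\la t}(e^{D_2(\rho\|E(\rho))}-1)$. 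Thus $D(T_t(\rho)\|\N)\le\log\bigl(1+e^{-2\la t}(e^{D_2(\rho\|E(\rho))}-1)\bigr)$, and the elementary inequality $\log(1+x)\le 2\sqrt{x}$ produces $D(T_t(\rho)\|\N)\le 2e^{-\la t}\sqrt{e^{D_2(\rho\|E(\rho))}-1}\le 2e^{-\la t+D_2(\rho\|E(\rho))/2}$, i.e. \eqref{decay} with the comparison state $E(\rho)$ in place of the infimum defining $D_2(\rho\|\N)$.

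The remaining point, which I expect to be the real obstacle, is to replace $D_2(\rho\|E(\rho))$ by the smaller $D_2(\rho\|\N)=\inf_{\si\in S(\N)}D_2(\rho\|\si)$. The tension is structural: the Pythagoras/contraction step genuinely wants $\si=E(\rho)$, since only then does the stationary part $E(\si^{-1/4}\rho\si^{-1/4})$ equal $\si^{1/2}$ and contribute exactly $1$, whereas $D_2(\rho\|\N)$ is attained at a generally different $\si_0$. To bridge this I would instead start from a near‑optimal factorization $\rho=\si_0^{1/4}y_0\si_0^{1/4}$ with $y_0\in L_2(\M)_+$, $\si_0\in S(\N)$, $\norm{y_0}{2}=e^{D_2(\rho\|\N)/2}$ (Proposition \ref{unique} and \eqref{positive}); the bimodule property gives $T_t(\rho)=\si_0^{1/4}T_t(y_0)\si_0^{1/4}$ with $\norm{T_t(y_0)}{2}\le\norm{y_0}{2}$, and then one must control the mismatch between $\si_0$ and $E(\rho)$ in passing to the output, or alternatively interpolate between the trivial estimate $D(\rho\|\N)\le D_2(\rho\|\N)\le 2e^{D_2(\rho\|\N)/2}$ at $t=0$ and the contraction estimate for large $t$. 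Keeping the constant $2$ and the exponent $-\la t$ while trading $E(\rho)$ for the $D_2$‑optimal state is the delicate part; the rest is the bimodule identity, Pythagoras, and $D\le D_2$.
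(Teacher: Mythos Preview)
Your reduction of the decoherence-time bound to \eqref{decay} via Pinsker is correct and matches the paper. The overall architecture for \eqref{decay}---use the $\N$-bimodule property of $T_t$ and $E$, split off the stationary part, and apply the $L_2$ contraction $\norm{T_t-E}{L_2\to L_2}\le e^{-\la t}$---is also the right one. You have correctly located the only real difficulty: your first pass yields $D_2(\rho\|E(\rho))$ in the exponent rather than the smaller $D_2(\rho\|\N)$, and these differ in general (cf.\ the example following Proposition~\ref{basic}).

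The gap is not in the analysis but in the choice of norm. By conjugating with a \emph{fixed} $\si^{-1/4}$ you are measuring $T_t(\rho)$ against that same $\si$, which forces the mismatch you describe. The paper avoids this by never fixing $\si$: it works directly with the amalgamated norm $\norm{\cdot}{L_1^2(\N\subset\M)}$, for which $D_2(\omega\|\N)=2\log\norm{\omega}{L_1^2}$ by \eqref{positive}, so the infimum over $\si$ is built in at both input and output. The two ingredients you are missing are then: (i) for an $\N$-bimodule map $S$ one has $\norm{S:L_1^2(\N\subset\M)\to L_1^2(\N\subset\M)}=\norm{S:L_2(\M)\to L_2(\M)}$ (this is \cite[Lemma 3.12]{CLSI}; the inequality $\le$ is immediate from your own observation that $S(\si_0^{1/4}y_0\si_0^{1/4})=\si_0^{1/4}S(y_0)\si_0^{1/4}$), and (ii) since $E(\rho)\in L_1(\N)$, one has $\norm{E(\rho)}{L_1^2}=\norm{E(\rho)}{1}=1$ automatically, with no need for $\si_0$ to equal $E(\rho)$. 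With these, the triangle inequality in $L_1^2$ (no Pythagoras needed) gives
\[
\norm{T_t(\rho)}{L_1^2}\le \norm{E(\rho)}{L_1^2}+\norm{(T_t-E)(\rho)}{L_1^2}\le 1+e^{-\la t}\norm{\rho}{L_1^2},
\]
hence $D(T_t(\rho)\|\N)\le D_2(T_t(\rho)\|\N)\le 2\log(1+e^{-\la t}e^{D_2(\rho\|\N)/2})\le 2e^{-\la t+D_2(\rho\|\N)/2}$. Your ``second attempt'' with the optimal $\si_0$ is in fact exactly the computation proving (i); you simply have to take the $L_1^2$-norm of $T_t(\rho)$ on the output side rather than compare it again to $\si_0$.
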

\begin{proof}The $\la$-spectral gap property is equivalent to
\[\norm{T_t-E:L_2(\M)\to L_2(\M)}{}\le e^{-\la t}\]
Since both $T$ and $E$ are $\N$-bimodule maps, it follows from \cite[Lemma 3.12]{CLSI} that
\begin{align*}
\norm{T_t-E: L_1^2(\N\subset \M)\to L_1^2(\N\subset \M)}{}&=\norm{T_t-E: L_2^2(\N\subset\M)\to L_2^2(\N\subset\M)}{}\\
&=\norm{T_t-E: L_2(\M)\to L_2(\M)}{}\le e^{-\la t}
\end{align*}
(see Appendix for definition of $L_p^q(\N\subset\M)$ for general $1\le p,q\le \infty$.)
Then for a density $\rho\in \M$,
\begin{align*}D(T_t(\rho)||\N)\le & D_2(T_t(\rho)||\N)\\
\le & 2\log
\norm{T_t(\rho)}{L_1^2(\N\subset \M)}\\
\le & 2\log
\Big ( \norm{E(\rho)}{L_1^2(\N\subset \M)}+\norm{T_t-E(\rho)}{L_1^2(\N\subset \M)}\Big)
\\ \le &2\log(1+e^{-\la t}\norm{\rho}{L_1^2(\N\subset\M)})\le 2e^{-\la t +D_{2}(\rho||\N)/2}\pl.
\end{align*}
Here we used the fact $\norm{E(\rho)}{L_1^2(\N\subset \M)}=\norm{E(\rho)}{1}=1$ because $E(\rho)\in L_1(\N)$. The decoherence time estimate follows from quantum Pinsker inequality.
\end{proof}

Let us compare the above theorem with the decay property \eqref{entropydecay} obtained from $\la$-MLSI. Because the MLSI constant $\le$ PI constant, the asymptotic decay rate in \eqref{decay} is at least as large as the MLSI constant. Nevertheless the constant factor $2e^{ D_{2}(\rho||\N)/2}$ in \eqref{decay} is larger than $D(\rho||\N)$ in MLSI. On the other hand, as mentioned in introduction MLSI of quantum Markov semigroup is not known to be tensor stable. In \cite{CLSI}, we introduced a tensor stable version of LSI called complete logarithmic Sobolev inequality. We say $(T_t)_{t\ge 0}$ satisfies $\la$-complete logarithmic Sobolev inequality (or $\la$-CLSI) if for any $n$, $id_{M_n}\ten T_t: M_n(\M)\to M_n(\M)$ satisfies $\la$-MSLI. It follows from data processing inequality that $\la$-CLSI is stable under tensorization \cite[Section 7.1]{CLSI}.
In particular, CLSI estimates the \emph{complete decoherence time}
defined as follows,
\[t_{c.deco}(\epsilon)=\inf \{t\ge 0 \pl | \pl\norm{id\ten T_t(\rho)-id\ten E(\rho)}{1}\le \epsilon, \pl \forall \pl n\ge 1 \pl\text{and density}\pl \rho\in M_n(\M)\}\]
Suppose $D_{cb}(\M||\N)<\infty$, we have as analog of \eqref{deco}
\[ \la\text{-CLSI}\pl \Longrightarrow t_{c.deco}(\epsilon)\le \frac{1}{\la} \big(2\log\frac{1}{\epsilon}+\log 2D_{cb}(\M||\N)\big)\pl \]
The complete version of decoherence time estimates the convergence rate independent of the dimension of auxiliary system $M_n$.

The complete decoherence time also estimates the loss of entanglement We say a density $\rho\in L_1(M_n(\M))$ is separable if $\rho=\sum_{j}\la_j \omega_j\ten \rho_j$ with $\sum\la_j=1,\la_j\ge 0$ and $\rho_j\in L_1(\M),\omega\in S_1^n$ densities, i.e. $\rho$ is a convex combination of product densities. We define the \emph{$\epsilon$-separablity time} of $T_t$ as follows,
\[t_{sep}(\epsilon)=\inf \{t\ge 0 \pl | \pl \forall \pl n\ge 1 \pl\text{and density}\pl \rho\in M_n(\M)\pl, \inf_{\si, \text{separable}}\norm{id\ten T_t(\rho)-\si}{1}\le \epsilon\}\]
The above definition describes the time $t$ that $T_t(\rho)$ becomes nearly separable. For examples, these $\epsilon$-separable states cannot be used for Bell inequality violations \cite{JP2011,buhrman}. For $\N$ noncommutative, $id\ten E(\rho)$ can contains entanglement and $T_t$ does not have finite $\epsilon$-separablity time.
If $\N$ is commutative, $id\ten T_t(\rho)$ converges to $id\ten E(\rho)$ which is always separable, hence
\[t_{sep}(\epsilon)\le t_{c.deco}(\epsilon)\]
In this case, CLSI also implies $\epsilon$-separablity time. However, it is not clear whether in general $\la$-MLSI implies $\la$-CLSI or complete decoherence time. We refer to \cite{CLSI} for more discussion about CLSI and related examples.

In contrast to MLSI, the spectral gap property (or PI) is stable under tensorization. Indeed, for any $n$, the generator $A$ has the same spectral as $I_{M_n}\ten A$, which is the generator of $id_{{M_n}}\ten T_t$. Based on this, Theorem \ref{d2} also applies to the semigroup $id_{{M_n}}\ten T_t$, which leads to an estimate of complete decoherence time.
\begin{cor}\label{d3}
Let $(T_t)_{t\ge 0}:\M\to \M$ be a self-adjoint quantum Markov semigroup and $\N$ be the decoherence-free subalgebra of $T_t$. Suppose $T_t$ satisfies $\la$-PI. Then for any $n$ and density $\rho\in M_n(\M)$,
\begin{align}D(id\ten T_t(\rho)||M_n(\N))\le 2e^{-\la t +D_{2}(\rho||M_n(\N))/2}\pl.\label{decay}\end{align}
If in additional $D_{2,cb}(\M||\N)<\infty$, then
\begin{align}t_{c.deco}(\epsilon)\le \frac{1}{\la} \big(2\log\frac{2}{\epsilon}+D_{2,cb}(\M||\N)/2\big)\label{est}\end{align}
\end{cor}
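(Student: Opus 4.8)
The plan is to deduce everything from Theorem \ref{d2} by amplification: set $S_t = id_{M_n}\ten T_t$, acting on the finite von Neumann algebra $M_n(\M)$ with trace $tr_{M_n}\ten tr$. First I would check that $S_t$ is again a self-adjoint quantum Markov semigroup: complete positivity, unitality and the semigroup law are inherited from $T_t$ under tensoring with $id_{M_n}$, and self-adjointness holds because $id_{M_n}$ is trace-self-adjoint on $M_n$. Next I would identify the decoherence-free subalgebra of $S_t$ as $M_n(\N)$ --- since $S_t$ acts as the identity on the $M_n$ leg and $T_t|_{\N}$ consists of $*$-homomorphisms, the common multiplicative domain of $S_t$ is exactly $M_n(\N)$, with trace-preserving conditional expectation $id_{M_n}\ten E$.

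The one genuinely needed new fact is that $\la$-PI is stable under amplification. The generator of $S_t$ is $I_{M_n}\ten A$, and $I - id_{M_n}\ten E = id_{M_n}\ten(I-E)$, so
\[\norm{S_t - id_{M_n}\ten E: L_2(M_n(\M))\to L_2(M_n(\M))}{} = \norm{T_t-E: L_2(\M)\to L_2(\M)}{} \le e^{-\la t},\]
because tensoring a bounded Hilbert-space operator with the identity does not change its norm; equivalently $I_{M_n}\ten A$ has the same spectral gap $\la$ as $A$. Hence $S_t$ satisfies $\la$-PI, and applying the decay bound of Theorem \ref{d2} to the inclusion $M_n(\N)\subset M_n(\M)$ gives immediately
\[D(id\ten T_t(\rho)||M_n(\N)) \le 2\,e^{-\la t + D_2(\rho||M_n(\N))/2},\]
which is the first assertion.

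For the complete decoherence time, I would use that $id\ten E$ commutes with $S_t$ and that $E\circ T_t = E$, whence $(id\ten E)(id\ten T_t(\rho)) = id\ten E(\rho)$; by Proposition \ref{basic} iv) this gives $D(id\ten T_t(\rho)||M_n(\N)) = D(id\ten T_t(\rho)||id\ten E(\rho))$. Under the hypothesis $D_{2,cb}(\M||\N)<\infty$ we have, directly from the definition of $D_{2,cb}$, that $D_2(\rho||M_n(\N)) \le D_2(M_n(\M)||M_n(\N)) \le D_{2,cb}(\M||\N)$ uniformly in $n$, so the decay bound becomes independent of $n$. Combining with quantum Pinsker's inequality $D(\rho||\si)\ge\frac12\norm{\rho-\si}{1}^2$ yields $\norm{id\ten T_t(\rho) - id\ten E(\rho)}{1} \le 2\,e^{-\la t/2 + D_{2,cb}(\M||\N)/4}$ for every $n$ and every density $\rho\in M_n(\M)$; imposing that the right-hand side be at most $\epsilon$ and solving for $t$ gives exactly \eqref{est}.

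I do not expect a serious obstacle: the one delicate ingredient --- comparing the $L_1^2(\N\subset\M)$-norm of $T_t(\rho)$ with the $L_2$-norm decay through the $\N$-bimodule interpolation lemma --- is already packaged inside Theorem \ref{d2}. The only point requiring attention is normalization bookkeeping: one must check that the trace on $M_n(\N)\subset M_n(\M)$ entering the decay bound is the one implicit in the definition of $D_{2,cb}(\M||\N)$, which it is, since $D_{2,cb}$ is by construction the supremum of $D_2(M_n(\M)||M_n(\N))$ over exactly these matrix amplifications.
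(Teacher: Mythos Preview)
Your proposal is correct and follows exactly the approach the paper intends: the corollary is obtained from Theorem \ref{d2} by amplifying to $id_{M_n}\ten T_t$, using that $I_{M_n}\ten A$ has the same spectral gap as $A$, and then bounding $D_2(\rho||M_n(\N))$ by $D_{2,cb}(\M||\N)$ and invoking Pinsker's inequality to extract the decoherence-time bound. Your write-up in fact supplies more detail (identifying the multiplicative domain of the amplified semigroup, checking the trace normalizations) than the paper, which simply notes the spectral-gap stability and declares the corollary.
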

\begin{rem}{\rm The above theorem applies for all finite dimensional self-adjoint semigroup because the generator $A$ always has positive spectral gap. In particular, we obtain that for all self-adjoint semigroup $T_t$ whose fixpoint subalgebra $\N$ is commutative, $T_t$ admits an $\epsilon$-separablity time that is independent of dimension of entangled system}
\end{rem}

\begin{rem}{\rm
The above theorem also applies for tensor product of semigroups. Indeed,
 for two semigroups  $S_t:\M_1\to \M_1$ and $T_t:\M_2\to \M_2$, \begin{enumerate}
\item[i)] If $S_t$ satisfies $\la_1$-PI and $T_t$ satisfies $\la_2$-PI, then $S_t\ten T_t$ satisfies $\min \{\la_1,\la_2\}$-PI.
\item[ii)] If $D_{2,cb}(\M_1||\N_1)<\infty$ and $D_{\infty,cb}(\M_2||\N_2)<\infty$, then $D_{2,cb}(\M_1\ten \M_2||\N_1\ten \N_2)\le D_{2,cb}(\M_1||\N_1)+D_{\infty,cb}(\M_2||\N_2)<\infty$
    by Theorem \ref{add}. Moreover, we know that for finite dimensional $\M_1$ and $\M_2$, \[D_{2,cb}(\M_1\ten \M_2||\N_1\ten \N_2)=D_{cb}(\M_1\ten \M_2||\N_1\ten \N_2)=D_{cb}(\M_1||\N_1)+D_{cb}(\M_2||\N_2)\pl.\]
    \end{enumerate}}
    \end{rem}
We discuss the generalized dephasing map as examples. Let $a=(a_{ij})_{i,j=1}^{m}\in M_m$. The Schur multiplier of $a$ is defined as
\[T_a(x_{ij})=(a_{ij}x_{ij})\]
It is known \cite[Theorem 3.7]{paulsen} that $T_a$ is completely positive if and only if $a\ge 0$; is unital (or equivalently trace preserving) if and only if $a_{ii}=1$; is self-adjoint if and only if $a_{ij}=a_{ji}$.
\begin{exam}{\rm Let $T_t((x_{i,j}))=(e^{-b_{ij}t}x_{ij})$ be a semigroup of Schur multiplier. The generator of $T_t$ is the Schur multiplier of $b=(b_{ij})$,
\[A((x_{i,j}))=(b_{ij}x_{ij})\pl.\]
By Schoenberg's theorem \cite{Schoenberg38}, $T_t$ are unital completely positive trace preserving and self-adjoint if and only if $b_{ii}=0, b_{ij}=b_{ji}\ge 0$ and \emph{conditional negative definite}, i.e. for any real sequence $(c_1,\cdots, c_m)$ with $\sum_{i=1}^mc_i=0$,
\[\sum_{i,j=1}^mc_ic_jb_{i,j}\le 0\pl.\]
For $T_t$, the subalgebra $\N$ is
\[\N=\{\sum x_{ij}e_{ij}\pl | \pl x_{ij}=0 \pl \text{for all $(i,j)$ that $b_{i,j}=0$} \}\pl.\]Lnd $e_{ij}\in M_m$ be the unit matrices in $M_m$.
Because $e_{ij}$ are eigenvector of the generator $A$ with eigenvalue $b_{ij}$, then the spectral gap is
\[\la=\min \{b_{i,j}\pl  |\pl b_{i,j}\neq 0\pl \}\pl.\]
Let us assume that \begin{align}\la=\min_{i\neq j}|a_{ij}|>0\pl.\ \label{assume1}\end{align}
Then $\N\cong l_\infty^m$ is the commutative subalgebra of diagonal matrices and the conditional expectation $E(\sum x_{ij}e_{ij})=\sum x_{ii}e_{ii}$ is the completely dephasing channel. $D(\rho||\N)$ is exactly the relative entropy of coherence in \cite{winter16}, and by Corollary \ref{pin} ii) and formula \eqref{formula},
\[D_{2,cb}(M_m||l_\infty^m)=D_{cb}(M_m||l_\infty^m)=\sup_nD(M_n(M_m)||M_n(l_\infty^m))= m \pl.\]
Then Corollary \ref{d3} implies that 
\[t_{sep}(\epsilon)\le t_{c.deco}(\epsilon)\le \frac{1}{\la} \big(2\log\frac{2}{\epsilon}+m/2\big)\pl.\]
}\end{exam}

We now discuss the discrete time setting. A quantum Makrov map $\displaystyle T:M\to M$ is a normal completely positive unital map. \[\N=\{a\in \M \pl | \pl T(a^*a)=T(a^*)T(a)\pl \text{and}\pl  T(aa^*)=T(a)T(a^*) \}\] be the multiplicative domain of $T$. $T$ restricted on $\N$ is a normal trace preserving $*$-homomorphism. Suppose $T$ is self-adjoint with respect to trace
$tr(xT(y))=tr(T(x)y)$. Then $T^2$ is identity on $\N$ because for any $a,b\in \N$
\[tr(aT^2(b))=tr(T(a)T(b))=tr(T(ab))=tr(ab)\pl.\]
and hence $T$ is a isometry on $L_2(\N)$. For the conditional expectation $E:\M\to \N$, we have
\begin{align} T^2\circ E=E\circ T^2= E\pl, T\circ E=E\circ T \pl.\label{relation}\end{align}
\begin{theorem}\label{dis}
Let $T:\M\to \M$ be a self-adjoint quantum Markov map and let $\N$ be multiplicative domain of $T$. Suppose $\norm{T(I-E):L_2(\M)\to L_2(\M)}{}\le \mu <1$.
Then for any $n\ge 1$ and density $\rho\in M_n(\M)$, we have
\[D(T^{k}(\rho)||M_n(\N))\le 2 \mu^{k} e^{D_{2}(\rho||M_n(\N))/2}\pl.\]
Moreover, for $k\ge (\log\frac{1}{\mu})^{-1}(\log (4/\epsilon^2) +D_{2,cb}(\M||\N)/2)$,
\begin{align*} \norm{id\ten T^k(\rho)-id \ten E (\rho)}{1}\le \epsilon \pl \pl \pl \text{for $k$ even,}\\
\norm{id\ten T^k(\rho)-id \ten T\circ E (\rho)}{1}\le \epsilon \pl \pl \pl \text{for $k$ odd.}\\
\end{align*}
\end{theorem}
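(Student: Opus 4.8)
\emph{Plan of proof.} The argument will follow the pattern of Theorem \ref{d2} and Corollary \ref{d3}, with the continuous contraction $\norm{T_t-E}{L_2\to L_2}\le e^{-\la t}$ replaced by the $k$-fold iterate of the discrete contraction $\norm{T(I-E)}{L_2\to L_2}\le\mu$. Since everything tensorizes, I would first reduce to a fixed $n$: write $T$ for $id_{M_n}\ten T$ and $E$ for $id_{M_n}\ten E$ on $M_n(\M)$. Then $E$ is the trace-preserving conditional expectation onto $M_n(\N)$, $T$ is again a self-adjoint quantum Markov map whose restriction to $M_n(\N)$ squares to the identity, the relations \eqref{relation} persist, and $\norm{T(I-E):L_2(M_n(\M))\to L_2(M_n(\M))}{}=\norm{T(I-E):L_2(\M)\to L_2(\M)}{}\le\mu$ because tensoring a Hilbert-space operator with the identity does not change its norm. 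The feature absent in the semigroup case is that $T$ no longer fixes $\N$ pointwise: it acts there by an order-two trace-preserving automorphism $\theta:=T|_{\N}$, and this is what forces the even/odd alternative in the statement.

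\emph{The iterated contraction on the amalgamated space.} From \eqref{relation}, $TE=ET$, hence $(I-E)T=T(I-E)$, and therefore $(T(I-E))^{k}=T^{k}(I-E)$ as well as $ET^{k}=T^{k}E$; in particular $\norm{T^{k}(I-E):L_2\to L_2}{}\le\mu^{k}$, while $T^2E=E$ gives $ET^{k}=E$ for $k$ even and $ET^{k}=TE=T\circ E$ for $k$ odd. Next I would check that $T(I-E)$ is an $\N$-bimodule map up to the automorphism $\theta$: for $a,b$ in the multiplicative domain $\N$ and $x\in\M$ one has $T(I-E)(axb)=\theta(a)\,T(I-E)(x)\,\theta(b)$, using that $a,b$ lie in the multiplicative domain of $T$ and that $E$ is a conditional expectation. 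Hence \cite[Lemma 3.12]{CLSI} (in the form allowing a trace-preserving automorphism of the coefficient algebra) yields $\norm{T(I-E):L_1^2(\N\subset\M)\to L_1^2(\N\subset\M)}{}=\norm{T(I-E):L_2(\M)\to L_2(\M)}{}\le\mu$, and iterating gives $\norm{T^{k}(I-E):L_1^2(\N\subset\M)\to L_1^2(\N\subset\M)}{}\le\mu^{k}$.

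\emph{Entropy and trace-distance bounds.} For a density $\rho\in M_n(\M)$, write $T^{k}(\rho)=ET^{k}(\rho)+T^{k}(I-E)(\rho)$; since $ET^{k}(\rho)$ is a density of $M_n(\N)$ its $L_1^2$-norm equals $1$, so by the triangle inequality $\norm{T^{k}(\rho)}{L_1^2(M_n(\N)\subset M_n(\M))}\le 1+\mu^{k}\norm{\rho}{L_1^2(M_n(\N)\subset M_n(\M))}$. Using $D(\cdot||M_n(\N))\le D_2(\cdot||M_n(\N))=2\log\norm{\cdot}{L_1^2}$ (Proposition \ref{basic} and the norm formula for $D_p$), the inequality $\log(1+t)\le t$, and $\norm{\rho}{L_1^2}=e^{D_2(\rho||M_n(\N))/2}$, one obtains the first displayed estimate $D(T^{k}(\rho)||M_n(\N))\le 2\mu^{k}e^{D_2(\rho||M_n(\N))/2}$. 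For the second part I would bound $D_2(\rho||M_n(\N))\le D_{2,cb}(\M||\N)$ by definition, use $D(T^{k}(\rho)||M_n(\N))=D(T^{k}(\rho)||ET^{k}(\rho))$ (Proposition \ref{basic} iv)) together with the identification of $ET^{k}(\rho)$ as $(id\ten E)(\rho)$ for $k$ even and $(id\ten T\circ E)(\rho)$ for $k$ odd, and apply the quantum Pinsker inequality $D(\si||\tau)\ge\tfrac12\norm{\si-\tau}{1}^2$; this gives $\norm{id\ten T^{k}(\rho)-id\ten E(\rho)}{1}\le 2\mu^{k/2}e^{D_{2,cb}(\M||\N)/4}$ (and the analogue with $T\circ E$ when $k$ is odd), and solving $2\mu^{k/2}e^{D_{2,cb}(\M||\N)/4}\le\epsilon$ produces exactly $k\ge(\log\tfrac1\mu)^{-1}\big(\log\tfrac{4}{\epsilon^2}+D_{2,cb}(\M||\N)/2\big)$.

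\emph{Main difficulty.} The delicate step is the passage to $L_1^2(\N\subset\M)$. In the semigroup case $T_t$ acts trivially on $\N$, so $T_t-E$ is literally an $\N$-bimodule map and \cite[Lemma 3.12]{CLSI} applies verbatim; here one must either invoke that lemma in its (routine) generalization permitting an automorphism twist of $\N$, or verify by hand that $T$ carries a factorization $x=ayb$ with $a,b\in L_4(\N)$, $y\in L_2(\M)$ to $\theta(a)T(y)\theta(b)$ without increasing the product of norms — this uses precisely that $a,b$ lie in the multiplicative domain and that $\theta$ is isometric on $L_4(\N)$. Everything else — the iteration of contractions from \eqref{relation}, and the parity bookkeeping deciding whether $T^{k}(\rho)$ settles at $E(\rho)$ or at $T\circ E(\rho)$ — is routine.
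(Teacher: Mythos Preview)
Your proposal is correct and follows essentially the same route as the paper: iterate the discrete contraction using the relations \eqref{relation}, transfer the $L_2$ bound to $L_1^2(\N\subset\M)$ via \cite[Lemma~3.12]{CLSI}, and finish with the chain $D\le D_2=2\log\norm{\cdot}{L_1^2}$ plus Pinsker. The only noteworthy difference is that the paper expands $(T(I-E))^2=T^2-E$ and then treats $(T-T\circ E)^k$ as an $\N$-bimodule map without further comment, whereas you correctly observe that for odd $k$ this map is only $\N$-bimodular up to the trace-preserving automorphism $\theta=T|_{\N}$ and that the module lemma must be invoked in its twisted form (or verified by hand using $\theta$ isometric on $L_4(\N)$); your treatment of this point is in fact more careful than the paper's.
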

\begin{proof}Using the relation \eqref{relation}, we have
\[(T(I-E))^2= (T-T\circ E)^2= T^2-2T^2\circ E+ T^2\circ E=T^2-E\pl.\]
Then
\[ (T-T\circ E)^{2k}=T^{2n}-E \pl, \pl (T-T\circ E)^{2k+1}=T^{2k+1}-E\circ T\pl.\]
By \cite[Lemma 3.12]{CLSI} again, since $(T-E)^k$ are $\N$-bimodule map,
\begin{align*}&\norm{(T-T\circ E)^k:L_1^2(\N\subset\M)\to L_1^2(\N\subset\M)}{}\\=&\norm{(T-T\circ E)^k:L_2(\M)\to L_2(\M)}{}\le \mu^k\end{align*}
The rest of argument is similar to Theorem \ref{d2}. Here we show the case for $k$ odd,
\begin{align*}D( id\ten T^{k}(\rho)||\N)\le & D_2(I\ten T^{k}(\rho)||\N)\\
\le & 2\log
\norm{T^{k}(\rho)}{L_1^2(\N\subset \M)}\\
\le & 2\log
\Big ( \norm{T\circ E(\rho)}{L_1^2(\N\subset \M)}+\norm{(T-T\circ E)^{k}(\rho)}{L_1^2(\N\subset \M)}\Big)\\
\le & 2\log
( 1+\mu^k\norm{\rho}{L_1^2(\N\subset \M)})
\\ \le &2\mu^k e^{D_{2}(\rho||\N)/2}\pl.
\end{align*}
Applying the same argument for $\rho\in M_n(\M)$ yields the desired estimate.
\end{proof}
We end the discussion with Markov map as a Schur multiplier.
\begin{exam}{\rm
 Let $a=(a_{ij})_{i,j=1}^{m}\in M_m$. The Schur multiplier
\[T_a(x_{ij})=(a_{ij}x_{ij})\]is a quantum Markov map if and only if $a$ is a real symmetric positive matrix with $a_{ii}=1$. Then multiplicative domain of $T_a$ is
\[\N=\{\sum x_{ij}e_{ij}\pl | \pl x_{ij}=0 \pl \text{for all $(i,j)$ that $|a_{i,j}|<1$} \}\pl.\]
Let us assume that \begin{align}\mu=\max_{i\neq j}|a_{ij}|<1\pl.\ \label{assume}\end{align}
Then $\N\cong l_\infty^m$ is the diagonal matrices in $M_m$.
Because $e_{ij}$ are eigenvector of $T_a$ with eigenvalue $a_{ij}$, the spectral gap is
\[\norm{T_a(I-E):L_2(M_m)\to L_2(M_m)}{}=\mu <1\pl.\]
Therefore, by Theorem \ref{dis}, for any $n\ge 1$ and density $\rho\in M_n(M_m)$, we have \[\norm{id\ten T_a^k(\rho)-id \ten E (\rho)}{1}\le \epsilon\] whenever
\begin{align}\label{ent}k\ge (\log\frac{1}{\mu})^{-1}(\log (4/\epsilon^2) +m/2)\pl.\end{align}}
\end{exam}

\appendix
\section{}
\subsection{Amalgamated $L_p$-space and Conditional $L_p$-spaces}
In this section, we recall the definition of amalgamated $L_p$-space and conditional $L_p$-spaces for semifinite von Neumann algebras. For the case of general von Neumann algebras, we refer to \cite{JPmemo}. Let $\M$ be a semifinite von Neumann algebra equipped with a normal semifinite faithful trace $tr$. Let $\N\subset \M$ be a subalgebra such that $tr|_{\N}$ is also semifinite. For $1\le p\le q\le \infty$ and ${1}/{p}-{1}/{q}={1}/{r}$, we define the amalgamated $L_p$-space $L_p^q(\N\subset\M)$ as the set of all $x\in L_p(\M)$ which admits a factorization $x=ayb$ with $a,b\in L_{2r}(\N), y\in L_q(\M)$ equipped with the norm
\[\norm{x}{L_p^q(\N\subset\M)}
                             =\inf_{x=ayb, \pl a,b\in\N} \norm{a}{L_{2r}(\N)}\norm{y}{L_q(\M)}\norm{b}{L_{2r}(\N)}\pl.\]
For $1\le q\le p\le \infty$ and ${1}/{q}-{1}/{p}={1}/{r}$, the conditional $L_p$-space $L_p^q(\N\subset\M)$ is the completetion of $L_p(\M)$ with respect to the norm
\[\norm{x}{L_p^q(\N\subset\M)}=\sup_{\norm{\pl a\pl }{L_{2r}(\N)}=\norm{\pl b\pl }{L_{2r}(\N)}=1} \norm{axb}{L_q(\M)}\pl.\]
It follows from H\"older inequality that
\begin{enumerate}
\item[i)] $L_p^p(\N\subset\M)=L_p(\M)$,
\item[ii)] for $q_1\le p \le q_2$,
$\norm{x}{L_p^{q_1}(\N\subset \M)}\le \norm{x}{L_p(\M)}\le \norm{x}{L_p^{q_2}(\N\subset \M)}\pl,$
\item[iii)] $L_p(\N)\subset L_p^q(\N\subset\M)$ for any $1\le q\le \infty$. Moreover, $\norm{x}{L_p^q(\N\subset\M)}=\norm{x}{L_p(\N)}$ if and only if $x\in L_p(\N)$
\end{enumerate}
For $1<p,q<\infty,\frac{1}{p}+\frac{1}{p'}=1$ and $\frac{1}{q}+\frac{1}{q'}=1$, we have the duality
$L_q^p(\N\subset\M)^*=L_{q'}^{p'}(\N\subset\M)$ via
\[\norm{x}{L_q^p(\N\subset\M)}=\sup \{|tr(xy)|\pl | \norm{y}{L_{q'}^{p'}(\N\subset\M)}\le 1\}\pl,\]
For $q=1$, $L_1^p(\N\subset\M)\subset L_\infty^{p'}(\N\subset\M)^*$ as a $w^*$-dense subspace. (see \cite[Propsition 4.5]{JPmemo}).
 The complex interpolation relation is also proved in \cite{JPmemo},
\[ L_q^p(\N\subset\M)=[L_{q_0}^{p_0}(\N\subset\M),L_{q_1}^{p_1}(\N\subset\M)]_\theta\]
isometrically where $(1-\theta)/p_0+\theta/p_1=1/p, (1-\theta)/q_0+\theta/q_1=1/q$ and $(p_1-q_1)(p_2-q_2)\ge 0$.

We will also need some asymmetric version of above $L_p$-spaces. For $2\le r\le \infty, 1\le p,q\le \infty $ and $\frac{1}{q}=\frac{1}{r}+\frac{1}{p}$, we define the norm
\[\norm{x}{L_{(r,\infty)}^{p}(\N\subset \M)}=\sup_{\norm{\pl a\pl}{L_r(\N)}=1}\norm{ax}{L_q(\M)}\pl.\]
where the supreme runs over all $a\in L_{r}(\N)$ with $\norm{a}{L_{r}(\N)}=1$. The dual spaces are the amalgamated space $L_{q'}(\M)L_r(\N)$ given by
\[\norm{y}{L_{q'}(\M)L_r(\N)}=\inf_{y=za} \norm{z}{L_{q'}(\M)}\norm{a}{L_r(\N)}\pl.\]
For $1< q<\infty$, we have the dual relation
\begin{align}\norm{x}{L_{(r,\infty)}^{2}(\N\subset \M)}&=\sup\{\norm{ax}{L_q(\M)} \pl | \pl \norm{a}{L_r(\N)}=1\}\nonumber\\
&=\sup\{|tr(zax)| \pl | \pl \norm{a}{L_r(\N)}=1, \norm{z}{L_{q'}(\M)}=1\}\nonumber\\
&=\sup \{|tr(yx)| \pl | \norm{y}{L_{q'}(\M)L_r(\N)}=1\}\label{dual}
 \end{align}
These spaces also interpolates (see Theorem 4.6 from \cite{JPmemo}).
Note that the property ii) and iii) in Proposition \ref{basic} can also be obtained from complex interpolation relation of the space $L_q^p(\N\subset\M)$ and $L_{(r,\infty)}^{p}$ proved in \cite{JPmemo}. We now prove Proposition \ref{unique}.
\begin{prop}For $1/2\le p\le \infty$, $\displaystyle D_p(\rho||\N)=\inf_{\si\in \S(\N)}D_p(\rho||\si)$ attains the infimum at some $\si$. For $1/2<p<\infty$, such $\si$ is unique.
\end{prop}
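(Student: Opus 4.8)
\emph{Proof plan.} The plan is to dispatch $p=1$ separately (it is already done: the identity \eqref{p} exhibits $E(\rho)$ as the unique minimiser) and to recast every other case as a convex optimisation over a closed convex subset of a uniformly convex noncommutative $L_p$-space. First I would reduce to the case that $E(\rho)$ is faithful: one always has $\supp\rho\le\supp E(\rho)=:f$, since $f^\perp\rho f^\perp\ge 0$ has trace $tr(f^\perp E(\rho)f^\perp)=0$, so passing to $f\M f\supset f\N f$ with the renormalised trace changes neither $D_p(\rho\|\si)$ nor the relevant densities $\si$. A by-product of this reduction is that $\supp\rho$ has \emph{full $\N$-support}: no nonzero projection $g\in\N$ satisfies $g\,\supp\rho=0$, because then $gE(\rho)=E(g\rho)=0$. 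I also note that after the reduction any $\si$ with $D_p(\rho\|\si)<\infty$ is automatically faithful, since $\supp\rho\le\supp\si$ forces $\supp E(\rho)\le\supp\si$.

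For $1<p<\infty$ I would use $\norm{x}{L_1^p(\N\subset\M)}=\inf_\si\norm{\si^{-1/2p'}\rho\si^{-1/2p'}}{p}=\inf_\si\norm{\si^{-1/2p'}\rho^{1/2}}{2p}^2$ to rewrite $\exp(\tfrac1{2p'}D_p(\rho\|\N))$ as the distance in $L_{2p}(\M)$ from $0$ to
\[C:=\big\{\,h\rho^{1/2}\ :\ h\in\N_+\text{ invertible},\ tr(h^{-2p'})\le 1\,\big\}\qquad(h=\si^{-1/2p'}).\]
The set $C$ is convex: for $h=th_1+(1-t)h_2$ one has $tr(h^{-2p'})\le t\,tr(h_1^{-2p'})+(1-t)tr(h_2^{-2p'})\le 1$ by convexity of $h\mapsto tr(h^{-2p'})$ on invertible positives (Klein's inequality), while $h\rho^{1/2}$ is the corresponding convex combination. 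It is also closed: if $h_n\rho^{1/2}\to c$ in $L_{2p}(\M)$ then $h_n^{-1}$ is bounded in the reflexive space $L_{2p'}(\N)$, a subsequence converges weakly to some $g\ge0$, and since $\rho^{1/2}=h_n^{-1}(h_n\rho^{1/2})$ and multiplication is continuous with the weak topology on the first factor and the norm topology on the second, $\rho^{1/2}=gc$; a short argument using the reduction shows $g$ is invertible, so $c=g^{-1}\rho^{1/2}\in C$. As $L_{2p}(\M)$ is uniformly convex ($2<2p<\infty$), $C$ has a unique element $c^*=h_*\rho^{1/2}$ of minimal norm, with $tr(h_*^{-2p'})=1$ (otherwise rescaling $h_*$ down strictly decreases the norm), so $\si_*:=h_*^{-2p'}\in S(\N)$ attains $D_p(\rho\|\N)$. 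Uniqueness follows because two optimal $\si_1,\si_2$ give the same $c^*$, hence $h_1\rho^{1/2}=h_2\rho^{1/2}$ with $h_i=\si_i^{-1/2p'}$, and $h\mapsto h\rho^{1/2}$ is injective on $\N_+$: if $(h_1-h_2)\rho^{1/2}=0$ then $\supp(h_1-h_2)\,\supp\rho=0$ with $\supp(h_1-h_2)\in\N$, so $h_1=h_2$ by full $\N$-support.

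For $\tfrac12<p<1$, with $\tfrac1{2p}=\tfrac1r+\tfrac12$ the excerpt gives $D_p(\rho\|\N)=-r\log\sup\{\norm{a\rho^{1/2}}{2p}:a\in\N,\ \norm{a}{L_r(\N)}=1\}$, and since $\norm{a\rho^{1/2}}{2p}^{2p}=tr\big((\rho^{1/2}|a|^2\rho^{1/2})^p\big)$ depends only on $b:=|a|^2$, I would maximise $F(b):=tr\big((\rho^{1/2}b\rho^{1/2})^p\big)$ over the unit ball $B$ of $L_{r/2}(\N)_+$. Here $r/2>1$, so $B$ is weakly compact; $F$ is concave (operator concavity of $t\mapsto t^p$ on positives, composed with the positive linear map $b\mapsto\rho^{1/2}b\rho^{1/2}$ and with $v\mapsto tr\,v^p$) and norm-continuous on bounded sets ($\norm{\rho^{1/2}(b_1-b_2)\rho^{1/2}}{p}\le\norm{b_1-b_2}{r/2}$ by H\"older), hence weakly upper semicontinuous, so it attains its maximum, which supplies the optimal $\si$. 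Uniqueness follows from strict concavity: $v\mapsto tr\,v^p$ is strictly concave on positive operators and $b\mapsto\rho^{1/2}b\rho^{1/2}$ is injective on $L_{r/2}(\N)_+$ (if $\rho^{1/2}b\rho^{1/2}=0$ then $b^{1/2}\,\supp\rho=0$, so $\supp b\in\N$ is orthogonal to $\supp\rho$, whence $b=0$), so two maximisers would make their midpoint a strict improvement.

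The endpoints $p=\tfrac12$ and $p=\infty$, where $L_{r/2}(\N)=L_1(\N)$ resp.\ $\M$ are no longer uniformly convex and only existence is claimed, I would treat by hand: for $p=\tfrac12$ one maximises the jointly concave, norm-continuous (by Powers--St\"ormer, $|\si_1^{1/2}-\si_2^{1/2}|^2\le|\si_1-\si_2|$) fidelity $\si\mapsto\norm{\si^{1/2}\rho^{1/2}}{1}$ over $S(\N)$, while for $p=\infty$ one first checks $\exp(D_\infty(\rho\|\N))=\inf\{tr\,\si:\si\in\N_+,\ \si\ge\rho\}$ and minimises this weak-$*$-lower-semicontinuous, coercive (since $\norm{\si}{1}=tr\,\si$ on $\N_+$) trace functional over the closed convex set $\{\si\in\N_+:\si\ge\rho\}$; in both cases existence is extracted from weak-$*$ compactness of the positive functionals of $\N$ of bounded norm together with a check, using faithfulness of $E(\rho)$, that the limit is normal (no mass escapes to a singular part). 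I expect the main obstacle to be precisely this last point and, more broadly, the bookkeeping with generalised inverses and with operators only affiliated to $\M$ that is needed to make the ``weak $\times$ norm'' closedness of $C$, the identity $c^*=h_*\rho^{1/2}$, and the support manipulations rigorous: the uniform-convexity argument for $1<p<\infty$ and its concave counterpart for $\tfrac12<p<1$ run smoothly, but the two endpoints sit outside that machinery.
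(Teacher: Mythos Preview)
Your plan for $p=1$, for $1<p<\infty$, for $\tfrac12<p<1$, and for $p=\infty$ is sound, but in the two middle ranges it differs from the paper's route. The paper does not package the problem as a nearest-point projection onto a closed convex set; instead, for $1<p<\infty$ it takes a minimising factorisation $\sqrt\rho=a_n\eta_n$ with $\|a_n\|_{2p'}=1$, forms the averages $a_{n,m}=(\tfrac12 a_n^2+\tfrac12 a_m^2)^{1/2}$, and uses a direct parallelogram-type estimate together with uniform convexity of $L_{p'}(\N)$ to show $(a_n^2)$ is Cauchy, with uniqueness obtained by interlacing two optimal sequences. For $\tfrac12<p<1$ the paper runs the mirror computation on the maximising side, again via uniform convexity of $L_{r/2}(\N)$. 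Your alternative for $\tfrac12<p<1$---maximising the concave, norm-continuous functional $b\mapsto tr((\rho^{1/2}b\rho^{1/2})^p)$ over the weakly compact ball of the reflexive space $L_{r/2}(\N)$, with uniqueness from strict concavity and the injectivity of $b\mapsto\rho^{1/2}b\rho^{1/2}$---is more conceptual and avoids the explicit Cauchy estimates; your reduction to faithful $E(\rho)$ and the ensuing ``full $\N$-support'' injectivity are not in the paper but genuinely streamline the uniqueness step.

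The one genuine gap is at $p=\tfrac12$. Concavity and norm-continuity of the fidelity $\si\mapsto\|\si^{1/2}\rho^{1/2}\|_1$ give weak upper semicontinuity on $L_1(\N)$, but the unit ball of $L_1(\N)$ is \emph{not} weakly compact, and passing to the weak-$*$ closure in $\N^*$ produces a possible singular part that you cannot compare to $\rho$ through a square root. The normality argument you invoke works at $p=\infty$ only because a nonzero singular part would strictly \emph{improve} the infimum; there is no analogous monotonicity for the fidelity, so faithfulness of $E(\rho)$ alone does not force the limit to be normal. The paper avoids this entirely by rewriting
\[
\sup_{\si\in S(\N)}\|\si^{1/2}\rho^{1/2}\|_1=\sup_{y\ \text{unitary in }\M}\|E(\rho^{1/2}y)\|_2,
\]
and then observing that $\|E(\rho^{1/2}y)\|_2^2+\|(\mathrm{id}-E)(\rho^{1/2}y)\|_2^2=\|\rho^{1/2}\|_2^2=1$, so it suffices to find the element of minimal $L_2$-norm in the bounded, weakly closed set $\{(\mathrm{id}-E)(\rho^{1/2}y):y\ \text{unitary}\}\subset L_2(\M)$; Hilbert-space weak compactness and weak lower semicontinuity of the norm then give the minimiser without any appeal to $\N^*$. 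That $L_2$ reformulation is the missing idea in your endpoint argument.
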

\begin{proof}The case for $p=1$ follows from \eqref{p}. For $1<p<\infty$, we use the norm expression
\[D_p(\rho||\N)=p'\log \inf_{\rho=aya}\norm{a}{2p'}^2\norm{y}{p}=\inf_{\rho^{\frac{1}{2}}=a\eta}\norm{a}{2p'}^2\norm{\eta}{2p}^2\pl,\]
where $a\in L_{2p'}(\N), y\in L_p(\M),\eta\in L_{2p}(\M)$ and $a\ge 0$ positive. It suffices to show that the above infimum is attained at unique $a$. Assume $\norm{x}{L_1^p(\N\subset\M)}=1$. We find sequences $(a_n)\subset L_{2p'}(\N)$ and $(\eta_n)\subset L_{2p}(\M)$ such that for each $n$, $\sqrt{x}=a_n \eta_n$, $\norm{a_n}{2p'}=1$ and \[\norm{\eta_n}{2p}\ge 1 \pl ,\pl \lim_{n\to \infty}\norm{\eta_n}{{2p}} \to 1\pl.\]
By replacing $a_n$ with invertible element $\norm{a_n+\delta1}{2p}^{-1}(a_n+\delta1)$, we can assume that each $a_n\ge \delta_n 1$ for some $\delta_n>0$.
Write $a_{n,m}=(\frac{1}{2}a_n^2+\frac{1}{2}a_m^2)^{\frac{1}{2}}$.
Consider the factorization
	\[ \sqrt{x}=\left[\begin{array}{cc}\frac{a_n}{\sqrt{2}} & \frac{a_m}{\sqrt{2}}\end{array}\right]\cdot\left[\begin{array}{c}\frac{\eta_n}{\sqrt2} \\ \frac{\eta_m}{\sqrt2}\end{array}\right]=a_{n,m} \eta_{n,m}
\pl,\]
where $\eta_{n,m}=a_{n,m}^{-1}(\frac{1}{2}a_n\eta_n+\frac{1}{2}a_m\eta_m)$.
Note that
	\begin{align*}
	\norm{a_{n,m}}{2p'}&= \left\| \frac{a_n^2+a_m^2}{2}\right\|_{p'}^{\frac{1}{2}}\pl,\\
	\norm{\eta_{n,m}}{2p}&= \norm{\left[\begin{array}{cc}\frac{a_{n,m}^{-1}a_n}{\sqrt{2}} & \frac{a_{n,m}^{-1}a_m}{\sqrt{2}}\end{array}\right]\cdot\left[\begin{array}{c}\frac{\eta_n}{\sqrt2} \\ \frac{\eta_m}{\sqrt2}\end{array}\right]}{2p}\le
\left\| \frac{\eta_n^*\eta_n+\eta_m^*\eta_m}{2} \right\|_{p}^{\frac{1}{2}}\\ &\le (\frac{1}{2}\norm{\eta_n}{2p}^2+\frac{1}{2}\norm{\eta_m}{2p}^2)^\frac{1}{2}
	\end{align*}
which converges to $1$ when $n,m\to \infty$. Because $\sqrt{x}=a_{n,m}\eta_{n,m}$, we have $\norm{a_{n,m}}{2p'}\norm{\eta_{n,m}}{}
\ge 1$ for any $n,m$. Then we have\[\lim_{N\to \infty} \inf_{n,m\ge N}\norm{\frac{a_n^2+a_m^2}{2}}{p'} \ge 1\pl.\] By uniform convexity of noncommutative $L_{p'}$ spaces (c.f. \cite{kosaki84,fack86}), this implies that $(a_n^2)$ converges in $L_{2p'}$. Using the inequality $\norm{a^2-b^2}{2p}\ge \norm{a-b}{p}^{\frac{1}{2}}$ from \cite[Lemma 1.2]{caspers}, we have that $(a_n)$ converges in $L_{p'}(\N)$. On the other hand, because $L_{2p}(\M)$ is a dual space, there exists a subsequence $\eta_{n_k}\to \eta$ weakly and $\norm{\eta}{2p}\le 1$. Thus $\sqrt{x}=a_{n_k}\eta_{n_k}\to a\eta$ weakly in $L_2(\M)$. Hence $\sqrt{x}=a\eta$ and $\norm{a}{2p'}=\norm{\eta}{2p}=1$. Note that we have shown that for any sequence $a_n$ with $\sqrt{x}=a_{n}\eta_{n}$ and
\begin{align}\norm{a_n}{2p'}=1, \lim_{n\to \infty}\norm{\eta_n}{{2p}} \to 1, \label{con}\end{align}
 $a_n$ converges to some $a$ in $L_{2p'}$. Let $b_n$ be another such sequence with $\sqrt{x}=b_n\eta_n'$ and converges to $b$. Define $c_{2n-1}=a_n, c_{2n}={b_n},\xi_{2n-1}=\eta_n,\xi_{2n}=\eta'_{n}$. Then $\sqrt{x}=c_n\xi_{n}$ satisfies same condition of \eqref{con}. Then $c_n$ converges to some $c$ in $L_{2p'}$ which implies that the limit $a=b=c$ is unique. For $p=\infty$, we know
 \[D_\infty(\rho||\N)=\log \inf \{\la \pl  | \rho\le \la\si , \text{for some density} \si \in L_1(\N) \}\pl.\]
 Let $\la=\inf \{\la \pl  | \rho\le \la\si , \si\in S(\N)\}$ and let $\si_n$ be a sequence of densities in $L_1(\N)\cong \N_*$ such that $\la_n:=\min \{\la \pl  | \rho\le \la\si_n\}\to \la $ monotonically non-increasing. By $w^*$-compactness of state space in $\N^*$, we have a subsequence $\si_{n_k}$ converges to some state $\si\in \N^*$ in the weak$^*$ topology.
 Then for any $k$, $\la_{n_k}\si_{n_m}\ge \rho$ in $\N^*$ for $m\ge k$. Passing to the limit, we have $\la \si\ge \rho$ for some state $\si\in \N^*$. We show that $\si\in \N_*$. By the decomposition of the dual space $\N^{*}=\N_*\oplus \N_*^\perp$, $\si=\si_n\oplus \si_s$ decomposex as a normal part $\si_n\in \N_*$ and a singular part $\si_s\in \N_*^\perp$. Suppose $\si_s \neq 0$. Then $\si_0(1)=\mu <1$ and
 \[\rho \le \la\si \Rightarrow \rho \le\la\si_0\pl.\]
 Take the normalized density $\tilde{\si}=\frac{1}{\mu}\si_0\in \N_*$. We have $ \displaystyle \rho \le \frac{\la}{\mu}\tilde{\si} $
 with $\la/\mu >\la$ which is a contradiction. This proves the existence of $\si\in \N_*\cong L_1(\N)$.

 For $1< q=2p<2$ and $\frac{1}{q}=\frac{1}{r}+\frac{1}{2}$., it sufficient to show that the norm \[\norm{\rho^{\frac{1}{2}}}{L_{(r,\infty)}^{2}(\N\subset \M)}=\sup_{\norm{a}{L_r(\N)}=1}\norm{a\rho^{\frac{1}{2}}}{L_q(\M)}\]
 is attained for some $\norm{a}{L_r(\N)}=1$. Let $\norm{\rho^{\frac{1}{2}}}{L_{(r,\infty)}^{2}(\N\subset \M)}=\la$ and $a_n\ge 0$ be a positive sequence in $\norm{a_n}{L_r(\N)}=1$ such that $\norm{a_n\rho^{\frac{1}{2}}}{L_q(\M)}\to \la$. Write $a_{n,m}=(\frac{a_n^2+a_m^2}{2})^{\frac{1}{2}}$. We have
\begin{align*}
\left[\begin{array}{cc}a_n\rho^{\frac{1}{2}}&a_n\rho^{\frac{1}{2}} \\ a_m \rho^{\frac{1}{2}}&a_m\rho^{\frac{1}{2}}\end{array}\right]
= \left[\begin{array}{cc}a_na_{n,m}^{-1}&0 \\ a_ma_{n,m}^{-1}&0\end{array}\right] \cdot \left[\begin{array}{cc}a_{n,m} \rho^{\frac{1}{2}}&a_{n,m} \rho^{\frac{1}{2}} \\ 0&0\end{array}\right]
\end{align*}
For $n,m$ large enough $\norm{a_n\rho^{\frac{1}{2}}}{L_q(\M)}, \norm{a_m\rho^{\frac{1}{2}}}{L_q(\M)}\ge  (1-\epsilon)\la$. Then we have
\begin{align*}
&\norm{\left[\begin{array}{cc}a_n\rho^{\frac{1}{2}}& a_n\rho^{\frac{1}{2}} \\ a_m\rho^{\frac{1}{2}} &a_m\rho^{\frac{1}{2}}\end{array}\right]}{L_q(M_2(\M))}\ge \norm{\left[\begin{array}{cc}a_n\rho^{\frac{1}{2}}&\ \\ &a_m\rho^{\frac{1}{2}}\end{array}\right]}{L_q(M_2(\M))}\ge 2^{\frac{1}{q}}(1-\epsilon)\la\pl, \\ &\norm{\left[\begin{array}{cc}a_na_{n,m}^{-1}&0 \\ a_ma_{n,m}^{-1} &0\end{array}\right]}{L_\infty(M_2(\M))}=1\\ &\norm{\left[\begin{array}{cc}a_{n,m} \rho^{\frac{1}{2}}&a_{n,m} \rho^{\frac{1}{2}} \\ 0&0\end{array}\right]}{L_q(M_2(\M))}=\norm{\left[\begin{array}{cc}1&1 \\ 0&0\end{array}\right]}{L_q(M_2)}\norm{a_{n,m} \rho^{\frac{1}{2}}}{L_q(\M)}=2^{\frac{1}{q}}\norm{a_{n,m} \rho^{\frac{1}{2}}}{L_q(\M)}
\end{align*}
By the definition of $\la$,
\[(1-\epsilon)\la\le \norm{a_{n,m}\rho^{\frac{1}{2}}}{L_q(\M)} \Rightarrow (1-\epsilon)\le\norm{a_{n,m}}{L_r(\N)} \pl.\]
Thus we have shown \[\lim_{N\to \infty} \inf_{n,m\ge N}\norm{\frac{a_n^2+a_m^2}{2}}{\frac{r}{2}} \ge 1\pl.\]
Following the same argument of the case of $1<p<\infty$,
we obtain that $a_n$ converges $a$ in norm of $L_{r}(\N)$ with $\norm{a\rho^{\frac{1}{2}}}{q}=\la$, and such limit $a$ is unique for $\rho^{\frac{1}{2}}$.
Finally, we discuss the case for $p=1/2$. It suffices to show the following supremum is attained \begin{align}\label{b}\norm{z}{L_{(2,\infty)}^{2}(\N\subset \M)}&=\sup \{ \norm{az}{L_1(\M)}| \norm{a}{L_2(\N)}=1\}\nonumber\\
&=\sup \{ |tr(azy)| | {\norm{a}{L_2(\N)}=1},y\in \M \pl\text{unitary}\}\nonumber\\
&=\sup \{ \norm{E(zy)}{2} | y\in \M \pl\text{unitary}\}\pl.
\end{align}
Consider the set \[C=\{(id-E)(zy) \pl |\pl  y\in \M \pl\text{unitary}\}\pl.\]
$C$ is a weakly convex closed set in $L_2(\M)$. Indeed, for any net $y_\al$ such that $(id-E)(zy_\al)\to x$ weakly in $L_2(\M)$, we can find a subnet $y_{\beta}\to y$ weakly in $\M$. Then $(id-E)(zy_{\beta})\to (id-E)(zy)$ weakly in $L_2(\M)$. Hence $x=(id-E)(zy)$ which proves the closeness. We show that $C$ admits an element attains the infimum
\[\inf_{x\in C}\norm{x}{L_2(\M)}:=\la\]
Let $x_n$ be a sequence such that $\norm{x_n}{2}\to \la$. For a weakly converging subsequence $x_{n_k}\to x$, we have $x\in C$ by closeness and
\[\norm{x}{2}\le \liminf_{k\to \infty} \norm{x_{n_k}}{2}=\la\pl.\]
Hence the infimum norm for is attained.
 %$id-E(zy_1)\neq id-E(zy_2)$ and
%\[\norm{id-E(zy_1)}{2}=\norm{id-E(zy_1)}{2}=\la\]
%Then $\norm{id-E(z\frac{y_1+y_2}{2})}{2}< \la$ with $\norm{\frac{y_1+y_2}{2}}{\infty}\le 1$. Then by Russo–Dye theorem,
Since $E:L_2(\M)\to L_2(\N)$ is a projection,
\[\norm{E(zy)}{2}^2+\norm{(id-E)(zy)}{2}^2=\norm{zy}{2}^2=1\]
We have the supremum \[\sup\{ \norm{E(zy)}{2}\pl |\pl y\in\M \pl \text{unitary}  \pl \}\] is attained by some $y_0$. Therefore the supremum in \eqref{b} is attained with $a=|E(zy_0)|$.
\end{proof}

\subsection{Operator space structures}
We shall now discuss the operator space structures of $L_1^p(\N\subset\M)$. Recall that $L_\infty^{p'}(\N\subset\M)\subset L_1^p(\N\subset\M)^*$ as a weak$^*$-dense subspace. We first consider the operator space structure on $L_\infty^{p'}(\N\subset\M)$ and induce the structure for $L_1^p(\N\subset\M)$ via duality. For $p=\infty$ and $p'=1$, the norm of $L_\infty^1(\N\subset\M)$ is given by
\begin{align*}\norm{x}{L_\infty^1(\N\subset\M)}=\sup_{\norm{\pl a\pl }{L_2(\N)}=\norm{\pl b\pl }{L_2(\N)}=1}\norm{axb}
{L_1(\M)}\pl.\end{align*}
Define its operator space structure as follows,
\begin{align*}&M_{n}(L_\infty^1(\N\subset\M))\cong L_\infty^1(M_n(\N)\subset M_n(\M))\pl.
\end{align*}
We verify the above norms satisfies Ruan's axioms (c.f. \cite{Ruan}). Denote $e_1\in M_{n+m}(\M)$ be the projection for $M_n(\M)$ and $e_2$ for $M_m(\M)$.
Consider $x=x_1\oplus x_2=e_1xe_1+e_2xe_2\in M_n(\M)\oplus M_m(\M)$. Then
\begin{align*}&\norm{x}{L_\infty^1(M_{n+m}(\N)\subset M_{n+m}(\M))}\\=&\sup_{\norm{\pl a\pl }{2}=\norm{\pl b\pl }{2}=1}\norm{axb}{L_1(M_{n+m}(\M))}
\\=&\sup_{\norm{\pl a\pl }{2}=\norm{\pl b\pl }{2}=1}\norm{ax_1b+ax_2b}{L_1(M_{n+m}(\M))}
\\ \le &\sup_{\norm{\pl a\pl }{2}=\norm{\pl b\pl }{2}=1}\norm{ae_1x_1 e_1b}{L_1(M_{n+m}(\M))}+\norm{ae_2x_2e_2b}{L_1(M_{n+m}(\M))}\pl.
\\ \le &\sup_{\norm{\pl a\pl }{2}=\norm{\pl b\pl }{2}=1}\norm{|ae_1|x_1 |(e_1b)^*|}{L_1(M_{n}(\M))}+\norm{|ae_2|x_2|(e_2b)^*|}{L_1(M_{m}(\M))}\pl.
\end{align*}
For $a,b\in L_2(M_{n+m}(\M))$,
\begin{align*}&\norm{a}{2}^2=\norm{ae_1}{2}^2+\norm{ae_2}{2}^2=\norm{|ae_1|}{2}^2+\norm{|ae_2|}{2}^2=1\pl, \\ &\norm{b}{2}^2=\norm{e_1b}{2}^2+\norm{e_2b}{2}^2=\norm{|(e_1b)^*|}{2}^2+\norm{|(e_2b)^*|}{2}^2=1\end{align*}
where $|ae_1|,|(e_1b)^*|\in L_2(M_n(\M))\pl$ and $|ae_2|,|(e_2b)^*|\in L_2(M_m(\M))$. Then,
\begin{align*}
&\norm{x}{L_\infty^1(M_{n+m}(\N)\subset M_{n+m}(\M))}\\ \le & \norm{ae_2}{1}\norm{x_1}{L_1^\infty(M_{n}(\N)\subset M_{n}(\M))} \norm{e_1b}{2}+\norm{ae_2}{2}\norm{x_2}{L_1^\infty(M_{m}(\N)\subset M_{m}(\M))} \norm{e_2b}{2} \\ \le & \max\{\norm{x_1}{L_1^\infty(M_{n}(\N)\subset M_{n}(\M))} ,\norm{x_2}{L_1^\infty(M_{m}(\N)\subset M_{m}(\M))} \}\pl.
\end{align*}
Also the maximum in the inequality is achieved with $a,b\in L_2(M_{n}(\N))$ or $a,b\in L_2(M_{m}(\N))$. For $x\in M_n(\M), \al,\beta^*\in M_{n,m}$ , we have
\begin{align*}&\norm{(\al\ten 1)x(\beta\ten 1)}{L_1^\infty(M_{n}(\N)\subset M_{n}(\M))}
\\=&\sup_{\norm{\pl a\pl }{2}=\norm{\pl b\pl }{2}=1}\norm{a(\al\ten 1)x (\beta\ten 1)b}{L_1(M_{n}(\M))}\\\le & \sup_{\norm{\pl a\pl }{2}=\norm{\pl b\pl }{2}=1} \norm{a(\al\ten 1)}{2}\norm{x}{L_1^\infty(M_m(\N)\subset M_{m}(\M))} \norm{(\beta\ten 1)b}{2}
\\ \le&  \sup_{\norm{\pl a\pl }{2}=\norm{\pl b\pl }{2}=1}\norm{a}{2}\norm{\al}{M_{n,m}}\norm{x}{L_1^\infty(M_n(\N)\subset M_{n}(\M))} \norm{\beta}{M_{m,n}}\norm{b}{2}
\\ =&  \norm{\al}{M_{n,m}}\norm{x}{L_1^\infty(M_m(\N)\subset M_{m}(\M))} \norm{\beta}{M_{m,n}}\pl.
\end{align*}
Thus we verified $M_{n}(L_\infty^1(\N\subset\M)):= L_\infty^1(M_n(\N)\subset M_n(\M))$ indeed gives an operator space structure on $L_\infty^1(\N\subset\M)$. By complex interpolation, we obtain the operator space structure for $L_1^p(\N\subset\M)$.
\begin{prop}\label{os4}For $1\le p\le \infty$, we have isometric isomorphism
\begin{align*}&M_n(L_\infty^p(\N\subset \M))\cong L_\infty^p(M_n(\N)\subset M_n(\M))\pl.
\end{align*}
\end{prop}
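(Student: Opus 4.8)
The plan is to obtain the general exponent $p$ by complex interpolation between the endpoints $p=1$ and $p=\infty$, combining the Banach-space interpolation identity for conditional $L_p$-spaces recalled above with the compatibility of operator-space interpolation with matrix amplification.

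First I would dispose of the two endpoints. For $p=\infty$, property (i) of the conditional $L_p$-spaces gives $L_\infty^\infty(\N\subset\M)=L_\infty(\M)=\M$, so $M_n(L_\infty^\infty(\N\subset\M))=M_n(\M)=L_\infty^\infty(M_n(\N)\subset M_n(\M))$ with the usual operator space structure of $\M$, and there is nothing to prove. For $p=1$, the isometric identification $M_n(L_\infty^1(\N\subset\M))=L_\infty^1(M_n(\N)\subset M_n(\M))$ is precisely the operator space structure constructed and verified against Ruan's axioms in the computation preceding the statement. I would also record that $\M$ sits contractively and densely inside $L_\infty^1(\N\subset\M)$: contractivity is immediate from H\"older's inequality ($\norm{axb}{1}\le\norm{a}{2}\norm{x}{\infty}\norm{b}{2}$), and density holds because $L_\infty^1(\N\subset\M)$ is by definition the completion of $\M$ in that norm. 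Hence $(L_\infty^1(\N\subset\M),\M)$ is a compatible couple of operator spaces, to which complex interpolation applies.

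Next, set $\theta=1/p'=1-1/p$. Taking $q_0=q_1=\infty$, $p_0=1$, $p_1=\infty$ in the interpolation theorem for conditional $L_p$-spaces recalled above yields
\[
L_\infty^p(\N\subset\M)=\big[\,L_\infty^1(\N\subset\M),\,\M\,\big]_{\theta}
\]
isometrically as Banach spaces, and this endows $L_\infty^p(\N\subset\M)$ with the operator space structure it carries as the complex interpolation space of the two operator spaces $L_\infty^1(\N\subset\M)$ and $\M$. The relevant general fact is Pisier's operator-space complex interpolation: this structure is given at every matrix level by $M_n([E_0,E_1]_\theta)=[M_n(E_0),M_n(E_1)]_\theta$ isometrically. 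Applying it with $E_0=L_\infty^1(\N\subset\M)$ and $E_1=\M$, and then substituting the two endpoint identities of the previous paragraph, gives
\[
M_n\big(L_\infty^p(\N\subset\M)\big)=\big[\,L_\infty^1(M_n(\N)\subset M_n(\M)),\,M_n(\M)\,\big]_{\theta}.
\]

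Finally I would invoke the conditional-$L_p$ interpolation theorem once more, this time for the inclusion $M_n(\N)\subset M_n(\M)$, which is again a pair of semifinite von Neumann algebras with the amplified trace restricting to a semifinite trace on the subalgebra, so the hypotheses are met; it gives
\[
\big[\,L_\infty^1(M_n(\N)\subset M_n(\M)),\,M_n(\M)\,\big]_{\theta}=L_\infty^p(M_n(\N)\subset M_n(\M))
\]
isometrically. Chaining the last two displays yields $M_n(L_\infty^p(\N\subset\M))\cong L_\infty^p(M_n(\N)\subset M_n(\M))$ for every $n$, which is the assertion; since the left-hand side is by construction a matrix level of an interpolation of operator spaces, Ruan's axioms hold automatically, and reiteration shows the resulting structure does not depend on the endpoint couple chosen. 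The only real care needed — not a deep obstruction — lies in this double use of the Junge--Parcet machinery: confirming that their interpolation identity is available for the amplified inclusion $M_n(\N)\subset M_n(\M)$, and that complex interpolation of operator spaces commutes with passing to $M_n(\cdot)$. Everything else is bookkeeping.
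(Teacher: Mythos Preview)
Your proof is correct and follows essentially the same route as the paper: both arguments verify the endpoints $p=1,\infty$ directly and then interpolate, using the Junge--Parcet interpolation identity $L_\infty^p(\N\subset\M)=[L_\infty^1(\N\subset\M),\M]_\theta$ together with the fact that operator-space complex interpolation commutes with $M_n(\cdot)$. The only cosmetic difference is that the paper writes the couple in the order $[L_\infty(\M),L_\infty^1(\N\subset\M)]_{1/p}$ while you take $\theta=1/p'$ with the spaces swapped, which is equivalent by the symmetry $[E_0,E_1]_\theta=[E_1,E_0]_{1-\theta}$.
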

\begin{proof}Recall the complex interpolation relation for $1\le p\le \infty$,
\begin{align*} L_\infty^{p}(\N\subset\M)=[L_\infty(\M),L_\infty^1(\N\subset\M)]_{1/p}=L_\infty^p(\N\subset\M)\pl.
\end{align*}
Note that $M_n(L_\infty(\M))\cong L_\infty(M_n(\M))$. Then by interpolation,
\begin{align*}
L_\infty^{p}(M_n(\N)\subset M_n(\M))\cong& [L_\infty(M_n(\M)),L_\infty^1(M_n(\N)\subset M_n(\M))]_{1/p}\\\cong&
[M_n(L_\infty(\M)),M_n(L_\infty^1(\N\subset \M))]_{1/p}
\\\cong&
M_n(
L_\infty^p(\N\subset\M))\pl. \qedhere
\end{align*}\end{proof}
The following lemma shows that the connection $L_1^p(\N\subset\M)$ norm can be attained by pairing with the positive elements of $L_\infty^{p'}(\N\subset \M)$ in the unit ball.
\begin{lemma}\label{sup}Let $\rho\in L_1(\M)$ be positive. We have
\begin{align*}\exp{\Big(\frac{1}{p'}D_p(\rho||\N)\Big)}=&\inf \{ \norm{\si}{L_1^p(\N\subset\M)} | \pl \rho \le \si \pl \text{for some positive}\pl \si\in L_1^p(\N\subset\M)\}\pl.
\\=& \sup \{tr(x\rho)\pl | \pl x\in \M_+,    \norm{x}{L_\infty^{p'}(\N\subset \M)}\le 1 \}\pl.
\end{align*}
If finite, they all equal to $\norm{\rho}{L_1^p(\N\subset\M)}$.  The equality also holds for $+\infty$.
\end{lemma}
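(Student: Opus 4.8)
The plan is to verify separately that each of the three quantities equals $\mu:=\norm{\rho}{L_1^p(\N\subset\M)}$ (with $\mu=+\infty$ when $\rho\notin L_1^p(\N\subset\M)$). The first equality $\exp\big(\frac{1}{p'}D_p(\rho||\N)\big)=\mu$ is, for $1<p\le\infty$, just the identity $D_p(\rho||\N)=p'\log\norm{\rho}{L_1^p(\N\subset\M)}$ recorded in Section~2.3 (a restatement of \eqref{positive}); for $p=1$ it reads $\exp(0)=1=\norm{\rho}{1}$ and is trivial. For the infimum over $\si$: ``$\le$'' is clear on taking $\si=\rho$ when $\rho\in L_1^p(\N\subset\M)$, and if $\rho\notin L_1^p(\N\subset\M)$ there is no admissible $\si$ (any $\si\ge\rho$ would force $\mu\le\norm{\si}{L_1^p(\N\subset\M)}<\infty$), so the infimum is $+\infty$; ``$\ge$'' is the monotonicity of $\norm{\cdot}{L_1^p(\N\subset\M)}$ on the positive cone, which I would get by writing $\rho=\si^{1/2}k\si^{1/2}$ with $0\le k\le1$ in $\M$, so that $0\le\tau^{-\frac{1}{2p'}}\rho\tau^{-\frac{1}{2p'}}\le\tau^{-\frac{1}{2p'}}\si\tau^{-\frac{1}{2p'}}$ in $L_p(\M)$ for every density $\tau\in S(\N)$ with $\tau^{-\frac{1}{2p'}}\si\tau^{-\frac{1}{2p'}}\in L_p(\M)$; monotonicity of the $L_p$-norm on the positive cone and \eqref{positive} then give $\norm{\rho}{L_1^p(\N\subset\M)}\le\norm{\si}{L_1^p(\N\subset\M)}$.

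It remains to match the supremum $\sup\{tr(x\rho):x\in\M_+,\ \norm{x}{L_\infty^{p'}(\N\subset\M)}\le1\}$ with $\mu$. The bound ``$\le\mu$'' is the Hölder inequality $tr(x\rho)\le\norm{x}{L_\infty^{p'}(\N\subset\M)}\,\norm{\rho}{L_1^p(\N\subset\M)}$ for the pairing of \cite{JPmemo} (vacuous when $\mu=+\infty$). For ``$\ge\mu$'' I would use the $w^*$-dense inclusion $L_\infty^{p'}(\N\subset\M)\subset L_1^p(\N\subset\M)^*$ of \cite[Prop.~4.5]{JPmemo}, which yields $\mu=\sup\{|tr(x\rho)|:x\in L_\infty^{p'}(\N\subset\M),\ \norm{x}{L_\infty^{p'}(\N\subset\M)}\le1\}$ (read, when $\mu=+\infty$, as the statement that the trace functional is unbounded on $L_\infty^{p'}(\N\subset\M)$); restrict the supremum to $x\in\M$ using norm-density of $\M$ in $L_\infty^{p'}(\N\subset\M)$ and continuity of $x\mapsto tr(x\rho)$; and reduce to self-adjoint $x$ by replacing $x$ by a unimodular multiple and then by $\frac{1}{2}(x+x^*)$, which costs nothing since $\norm{x^*}{L_\infty^{p'}(\N\subset\M)}=\norm{x}{L_\infty^{p'}(\N\subset\M)}$ ($*$-invariance of the $L_{2p'}(\N)$- and $L_{p'}(\M)$-norms) and $tr\big(\frac{1}{2}(x+x^*)\rho\big)=\operatorname{Re} tr(x\rho)$ by traciality and $\rho=\rho^*$.

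The crux — and the step I expect to be most delicate — is the final passage to $x\ge0$, which cannot be done by simply replacing $x$ by $x_+$, because the conditional $L_{p'}$-norm is not solid (so $\norm{x_+}{L_\infty^{p'}(\N\subset\M)}\le\norm{x}{L_\infty^{p'}(\N\subset\M)}$ can fail). Instead I would show the supremum over positive $x$ already equals $\mu$ by combining the variational formula $\mu=\inf_{\si\in S(\N)}\norm{\si^{-\frac{1}{2p'}}\rho\si^{-\frac{1}{2p'}}}{L_p(\M)}$ with the positive-cone duality $\norm{y}{L_p(\M)}=\sup\{tr(zy):z\in\M_+,\ \norm{z}{p'}\le1\}$ for $y\ge0$, which presents $\mu$ as $\inf_\si\sup_z tr\big(\si^{-\frac{1}{2p'}}z\si^{-\frac{1}{2p'}}\rho\big)$; convexity of $\si\mapsto\norm{\si^{-\frac{1}{2p'}}\rho\si^{-\frac{1}{2p'}}}{L_p(\M)}$ on $S(\N)$ (a consequence of the joint convexity of $D_p$) together with $w^*$-compactness of $S(\N)$ should permit interchanging the infimum and supremum, equivalently passing to the minimizer $\si_0\in S(\N)$ from Proposition~\ref{unique} and invoking its first-order optimality condition. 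With $y_0=\si_0^{-\frac{1}{2p'}}\rho\si_0^{-\frac{1}{2p'}}$ and $z_0=y_0^{p-1}/\norm{y_0}{p}^{p-1}$, the element $x_0:=\si_0^{-\frac{1}{2p'}}z_0\si_0^{-\frac{1}{2p'}}$ is positive and satisfies $tr(x_0\rho)=\mu$, with $\norm{x_0}{L_\infty^{p'}(\N\subset\M)}\le1$ precisely by optimality of $\si_0$; approximating $x_0$ in the $L_\infty^{p'}$-norm by elements of $\M_+$ completes the finite case, and running the same construction on spectral truncations of $\si^{-\frac{1}{2p'}}\rho\si^{-\frac{1}{2p'}}$ yields, when $\mu=+\infty$, positive elements of the unit ball with $tr(x\rho)$ arbitrarily large. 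Verifying the norm bound $\norm{x_0}{L_\infty^{p'}(\N\subset\M)}\le1$ from the optimality of $\si_0$ — equivalently, justifying the inf--sup interchange — is the point requiring the most care.
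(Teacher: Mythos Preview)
Your treatment of the first equality (monotonicity of the $L_1^p(\N\subset\M)$-norm on the positive cone via $\rho=\si^{1/2}k\si^{1/2}$ with $\|k\|_\infty\le1$) is essentially the paper's argument. The divergence is in the supremum formula.

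The paper does not attempt to pass from the full dual unit ball to its positive part by constructing an extremizer. Instead it runs a Grothendieck--Pietsch separation: for any $\lambda<\lambda(\rho)$, the functions $f_\si(x)=tr(\rho x)-\lambda\,tr(\si x)$ on the weak$^*$-compact set $B=\{x\in\M_+:\|x\|_\infty\le1\}$ all have positive supremum; Hahn--Banach separates $\{f_\si\}$ from the open cone $\{f:\sup f<0\}$ in $C(B,\mathbb{R})$, producing a probability measure $\mu$ on $B$ with barycenter $x_0=\int_B x\,d\mu\in\M_+$ satisfying $tr(\rho x_0)\ge\lambda\,\|x_0\|_{L_\infty^{p'}(\N\subset\M)}$. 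This yields a positive $x_0$ in $\M$ from the outset and never touches the minimizer $\si_0$.

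Your route has a real gap at exactly the point you flag. The claim that $\|x_0\|_{L_\infty^{p'}(\N\subset\M)}\le1$ follows ``precisely by optimality of $\si_0$'' is not substantiated: unpacking it, you need $\|\tau^{1/(2p')}\si_0^{-1/(2p')}z_0\si_0^{-1/(2p')}\tau^{1/(2p')}\|_{p'}\le1$ for \emph{every} density $\tau\in S(\N)$, not just $\tau=\si_0$. In the commutative case this does drop out of the Euler--Lagrange equation ($E_\N(\si_0^{-p}\rho^p)$ is constant, equal to $\mu^p$), but noncommutatively a first-order analysis of $\si\mapsto\|\si^{-1/(2p')}\rho\si^{-1/(2p')}\|_p$ is considerably more delicate and you have not carried it out. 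The proposed inf--sup interchange via Sion fails for the same reason: you have convexity of $\si\mapsto\sup_z tr(\si^{-1/(2p')}z\si^{-1/(2p')}\rho)$ (from joint convexity of $D_p$), but Sion requires convexity in $\si$ for each fixed $z\ge0$, which is not available --- the map $\si\mapsto\si^{-1/(2p')}z\si^{-1/(2p')}$ has no useful operator-convexity here. A secondary issue is that your $x_0$ need not lie in $\M$ (it involves $\si_0^{-1/(2p')}$); your approximation remark handles this only once the norm bound is established. The Hahn--Banach argument in the paper avoids all of this by working inside $\M_+$ from the start.
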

\begin{proof}For the first inequality it is sufficient to show that \[\rho\le \si \pl \Longrightarrow\pl \norm{\rho}{L_1^p(\N\subset\M)}\le \norm{\si}{L_1^p(\N\subset\M)}\] Indeed, by $\rho\le \si $, we have $\rho=\si^{\frac{1}{2}}z\si^{\frac{1}{2}}$ for some $\norm{z}{\infty}\le 1$. Note that
\[\norm{\si}{L_1^p(\N\subset\M)}=\inf_{\si^{\frac{1}{2}}=a\eta} \norm{a}{L_{2p'}(\N)}^2\norm{\eta}{L_{2p}(\M)}^2\pl.\]
For each factorization $\si^{\frac{1}{2}}=a\eta$, we have $\rho=\si^{\frac{1}{2}}=a\eta z\eta^*a^*$ and hence
\begin{align*}
\norm{\rho}{L_1^p(\N\subset\M)}\le \norm{a}{L_{2p'}(\N)}\norm{\eta z\eta^*}{L_{p}(\M)}\norm{a^*}{L_{2p'}(\N)}\le \norm{a}{L_{2p'}(\N)}^2\norm{\eta}{L_{2p}(\M)}^2\pl.
\end{align*}
Thus $\norm{\rho}{L_1^p(\N\subset\M)}\le \norm{\si}{L_1^p(\N\subset\M)}$. For the second one we use a Grothendieck-Pietsch separation argument. Denote
\[\la(\rho):=\inf \{ \norm{\si}{L_p(\M)} | \rho \le \si \pl \text{for positive}\pl \si\in L_1^p(\N\subset\M)\}\]
We consider $\la(\rho)=+\infty$ if the infimum is empty. Let $\la$ be a positive number such that $\la<\la(\rho)$. Then for any positive $\si$ with $\norm{\si}{L_1^p(\N\subset\M)}\le  1$, we have
\[ \la \si-\rho \ngeq 0\pl,\]
hence has nontrivial negative part. Therefore there exists a positive $x\in \M_+$ such that $\norm{x}{\infty}=1$ and
\[ tr(\rho x )-\la tr(\si x) >0 \]
Consider the weak$^*$-compact subset
\[B=\{x\in  \M | \norm{x}{\infty}\le 1, x\ge 0\}\]
For each $\si\in \{\si \ge 0\pl, \norm{\si}{L_1^p(\N\subset\M)}\le 1\}$,
we define the function $f_\si: B\to \R$ as follows,
\[ f_{\si}(x)=tr (\rho x)-\la tr(\si x)\pl\]
These $f_{\si}$ are continuous with respect to weak$^*$-topology on $B$ because $\rho, \si\in L_1(\M)$. Denote
\begin{align*}&\mathcal{F}:=\{ f_{\si}\in C(B,\mathbb{R}) \pl |\si \ge 0\pl, \norm{\si}{L_1^p(\N\subset\M)}\le 1\pl.\}\\
&\F_-=\{ f\in C(B,\mathbb{R}) \pl |\pl  \sup f< 0\}
\end{align*}
Bothe $\F$ and $\F_-$ are convex and $\F_-$ is open. Moreover, $\F$ and $\F_-$ are disjoint because for each $f_\si\in \F$, $\sup_{x\in B} f_\si(x)>0$. Then by Hahn-Banach Theorem, there exists a norm one linear function $\psi: C(B,\mathbb{R})\to \mathbb{R}$ such that for any $f_-\in\F_-$ and $f_\si\in \F$,
\begin{align*}
\phi(f_-)<r \le \phi(f_\si)\pl.
\end{align*}
Because $\F_-$ is a cone, $r\ge 0$. Similarly, $r\le 0$ because for any $0<\delta<1$, $\delta\F\subset \F$. Then $r=0$ and $\phi$ is a positive linear functional because $\phi(f_-)< 0 $ for any $f_-\in \F_-$. By Riesz Representation Theorem, $\phi$ is given a Borel probablity measure $\mu$ on $B$. Namely.
\[\phi(f)=\int_B f(x)\mu(x)\pl.\]
Denote $x_0=\int_B x d\mu(x)$. We have for any positive $\si$ with $\norm{\si}{L_1^p(\N\subset\M)}\le 1$,
\begin{align*} \phi(f_\si)= \int_B f_\si(x)d\mu(x)&=\int_B tr(\rho x)-\la tr(\si x)d\mu(x)
=\tau(\rho x_0)-\la tr(\si x_0)\ge 0
\end{align*}
Note that for $x\in \M$,
\begin{align}
\sup_{\si \ge 0\pl, \norm{\pl\si\pl }{L_1^p(\N\subset\M)}\le 1} tr(\si x_0)&=\sup \{tr(aya^*x)\pl |\pl \norm{a}{L_{2p'}(\N)}\le 1,\pl y\ge 0,\pl \norm{y}{L_p(\M)}\le 1 \}\nonumber
\\ &=\sup \{\norm{a^*xa}{L_{p'}(\M)}\pl |\pl \norm{a}{L_{2p'}(\N)}\le 1 \}=\norm{x}{L_\infty^{p'}(\N\subset \M)}.
\end{align}
Thus, we have
\[\tau(\rho x_0)\ge \sup_{\si \ge 0\pl, \norm{\pl\si\pl }{L_1^p(\N\subset\M)}\le 1}\la tr(\si x_0)=\la \norm{x}{L_\infty^{p'}(\N\subset \M)}\pl.\]
By linearity, we prove that \[\sup \{tr(x\rho)\pl | \pl x\in \M, \pl x\ge 0,   \norm{x}{L_\infty^{p'}(\N\subset \M)}=1 \}\ge \la(\rho).\] If $\la(\rho)=+\infty$, they are clearly equal. If $\la(\rho)$ is finite, \[\la(\rho)=\norm{\rho}{L_1^p(\N\subset\M)}\ge \sup \{tr(x\rho)\pl | \pl x\in \M, \pl x\ge 0,   \norm{x}{L_\infty^{p'}(\N\subset \M)}=1 \}\pl,\]
by the duality $L_\infty^{p'}(\N\subset \M)\subset L_1^p(\N\subset\M)^*$. That completes the proof.
\end{proof}

\bibliography{asymmetry1}
\bibliographystyle{plain}

\end{document}